\newcommand{\dint}{\displaystyle\int}
\newcommand\redout{\bgroup\markoverwith
{\textcolor{red}{\rule[0.5ex]{2pt}{0.8pt}}}\ULon}
\theoremstyle{plain}
\newtheorem{theorem}{Theorem}[section]
\newtheorem{hyp}{Hypothesis}[section]
\newtheorem{corollary}[theorem]{Corollary}
\newtheorem{lemma}[theorem]{Lemma}
\newtheorem{proposition}[theorem]{Proposition}
\newtheorem{example}{Examples}[section]
\theoremstyle{definition}
\newtheorem{remark}[theorem]{Remark}
\numberwithin{equation}{section}
\numberwithin{theorem}{section}
\title{Hausdorff dimensions and Hitting probabilities for some general Gaussian processes}
\author{ }
\date{}
\begin{document}
\maketitle
 
 \begin{center}
     \author{FREDERI VIENS\footnote{Partially supported by the National Science Foundation award number DMS 1811779.}\\
     Department of Statistics and Probability,\\
     Michigan State University, USA\\
     e-mail\textup{: \texttt{viens@msu.edu}}}
 \end{center}
 \begin{center}
     
   \author{ MOHAMED ERRAOUI\\
   Department of mathematics, Faculty of science El jadida,\\ Chouaïb Doukkali University,
   Morocco \\
  e-mail\textup{: \texttt{erraoui@uca.ac.ma}}}

\begin{center}
         \author{ YOUSSEF HAKIKI\footnote{Supported by National Center for Scientific and Technological Research (CNRST)}\\
   Department of mathematics, Faculty of science Semlalia,\\ Cadi Ayyad University, 2390 Marrakesh, Morocco \\
  e-mail\textup{: \texttt{youssef.hakiki@ced.uca.ma}}}
 
 \end{center} 
\end{center}

   %Frederi Viens \at Michigan State University, Department of Statistics and Probability, USA. 
  %  \at
   % \email{viens@msu.edu}

\begin{abstract}
\small Let $B$ be a $d$-dimensional Gaussian process on $\mathbb{R}$, where the component are independents copies of a scalar Gaussian process $B_0$ on $\mathbb{R}_+$ with a given general variance function $\gamma^2(r)=\operatorname{Var}\left(B_0(r)\right)$ and a canonical metric $\delta(t,s):=(\mathbb{E}\left(B_0(t)-B_0(s)\right)^2)^{1/2}$ which is commensurate with $\gamma(t-s)$. We provide some general condition on $\gamma$ so that for any Borel set $E\subset [0,1]$, the Hausdorff dimension of the image $B(E)$ is constant a.s., and we explicit this constant. Also, we derive under some mild assumptions on $\gamma\,$ an upper and lower bounds of $\mathbb{P}\left\{B(E)\cap F\neq \emptyset \right\}$ in terms of the corresponding Hausdorff measure and capacity of $E\times F$. Some upper and lower bounds for the essential supremum norm of the Hausdorff dimension of $B(E)\cap F$ and $E\cap B^{-1}(F)$ are also given in terms of $d$ and the corresponding Hausdorff dimensions of $E\times F$, $E$, and $F$.   
\end{abstract}

\textbf{Keywords:} Gaussian process, Hitting probabilities, 
Hausdorff dimension, Capacity.
\vspace{0,2cm}

\textbf{Mathematics Subject Classification} \quad 60J45, 60G17, 28A78, 60G15
\section{Introduction}
This paper aims to study some fractal properties for Gaussian processes that have a general covariance structure, such as the Hausdorff dimension of the image set, the hitting probabilities problem, and the Hausdorff dimension of the level sets and the pre-images. The motivation arises from the high irregularity presented by certain types of Gaussian processes, see, for example, the family of processes $B^{\gamma}$ defined in \cite{Viens&Mocioalca2005}, which associate to each appropriate function $\gamma$ its corresponding process $B^{\gamma}$, which is defined through the following Volterra representation 
\begin{align}\label{volt repr}
    B^{\gamma}(t):=\dint_0^{t}\sqrt{\left(\frac{d\gamma^2}{dt}\right)(t-s)}dW(s),
\end{align}
where $W$ is a standard Brownian motion. In the particular case $\gamma(r):=\log^{-\beta}(1/r)$, where $\beta>1/2$, the process $B^{\gamma}$ is an element of a class of Gaussian processes called the class of logarithmic Brownian motions, which will be considered as the highly irregular class of continuous Gaussian processes for our study (there are some other highly irregular classes of Gaussian processes, but they are not continuous). In that case, considering the well known fact that the process $B^{\gamma}$ should have the function $h:r\mapsto \gamma(r)\log^{1/2}(1/r)$ as uniform modulus of continuity, up to a deterministic constant, we may deduce that $B^{\gamma}$ is no longer Hölder continuous for any order $\alpha\in (0,1)$, which illustrate a high level of irregularity. So, most of the results in the literature about the fractal properties for Gaussian processes, for the hitting probabilities problem see for example \cite{Xiao2009,Bierme&Xiao,Chen&Xiao}, and for the Hausdorff dimension of the image and the graph sets see \cite{Hawkes}, do not apply to the case of logarithmic Brownian motion. Because the conditions assumed in those previous works restrict the processes studied to the Hölder continuity scale type, i.e. when  $\gamma\left(r\right)\lesssim r^{\alpha}$ for some $\alpha\in (0,1)$. Since there are many regularity scales between the Hölder continuity scale and the logaritmic scale mentioned above, we need some precise quantitative results on the fractals properties for a class of Gaussian  processes $B$ that are satisfying only the condition \eqref{commensurate}, those results hold only under some general conditions on the variance function $\gamma$ that are already satisfied by large class of processes within and/or beyond the Hölder scale. %without any restriction about the commensurability of $\gamma$ with some fixed power function.  

We note that when $\gamma^2$ is of class $\mathcal{C}^2$ on $(0,\infty)$, $\lim_0\gamma=0$, and $\gamma^2$ is increasing and concave, the Gaussian processes $B^{\gamma}$ defined above in \eqref{volt repr} satisfiy $\eqref{commensurate}$ with $l=2$ (see Proposition 1 in \cite{Viens&Mocioalca2005}). This model of concentrating only on the commensurability condition \eqref{commensurate} has the power to relax the restriction of stationarity of increments (see Proposition 5 in \cite{Viens&Mocioalca2005}), and the Hölder continuity, as illustrated above by the logarithmic Brownian motion. Another interesting class of Gaussian processes with non-stationary increments, which satisfy \eqref{commensurate}, are the solutions of the linear stochastic heat equation, see those studied in \cite{Ouahhabi&Tudor2013}.

In section 2, we give some general hypotheses on $\gamma$, that are important to ensure some interesting properties for process $B$, as the two-points local nondeterminism in \eqref{two pts LND}, and the estimation \eqref{estim small ball 1}, in order to provide the optimal upper and lower bounds for both of the Hausdorff dimension of the image $B(E)$ and the hitting probabilities in the sections 3 and 4, respectively. 

The objective of section 3 in this paper is to find when the variance function $\gamma$ is strictly concave in a neighborhood of $0$, an explicit formula for the Hausdorff dimension of the image $B(E)$ where $E\subset \mathbb{R}_+$ is a Borel set. Hawkes resolved this problem at first in his paper \cite{Hawkes}, but only in the case of stationary increments and under the strong condition $ind(\gamma)>0$, where $ind(\cdot)$ is the lower index of $\gamma$, it will be defined in \eqref{def index}. Our aim in this section is to relax those two last conditions. Precisely, we will give a lower bound for the Hausdorff dimension of the image $\dim_{Euc}\left(B(E)\right)$, and under the condition \eqref{nice condition}, we show that the random variable $\dim_{Euc}\left(B(E)\right)$ is almost surely constant by proving that the lower bound is also an upper bound. This constant is expressed as the minimum between $d$ and $\dim_{\delta}(E)$ where $\dim_{\delta}(\cdot)$ denote the Hausdorff dimension associated to the canonical metric $\delta$. We note that Lemma \ref{lem check nice cond} may illustrates that \eqref{nice condition} is general than the condition of Hawkes "$ind(\gamma)>0$", and is even satisfied by an important class of functions zero lower index as we will see bellow.

In section 4, our investigation will be focused on providing upper and lower bounds for the probabilities of the event $\left\{B(E)\cap F\neq \varnothing \right\}$ where $E\subset \mathbb{R}_+$ and $F\subset \mathbb{R}^d$ are Borel sets, in terms of $\mathcal{H}_{\rho_{\delta}}^d\left(E\times F\right)$ and $\mathcal{C}_{\rho_{\delta},d}\left(E\times F\right)$, respectively. $\mathcal{H}_{\rho_{\delta}}^d\left(\cdot\right)$ and $\mathcal{C}_{\rho_{\delta},d}\left(\cdot\right)$ denote $d$-dimensional Hausdorff measure and the Bessel-Riesz type capacity of order $d$ with respect to an appropriate metric $\rho_{\delta}$ depending on the canonical metric $\delta$, which will be defined in the sequel. Similar to section 3, the lower bound is given just under the condition of strict concavity near 0 of $\gamma$. However, the upper bound can be obtained with the help of Lemmas \ref{lem upper bound} and \ref{cor estim small ball} under the mild condition \eqref{condition raisonable} on $\gamma$, which is stronger than \eqref{nice condition}, but it remains satisfied by almost all examples of interest. A less optimal upper bound for the probability of the above event is also given under the weaker condition \eqref{nice condition} in terms of $\mathcal{H}_{\rho_{\delta}}^{d-\varepsilon}\left(E\times F\right)$ for any $\varepsilon>0$ small enough.       

Our motivation for section 5 is the following: when the random intersections $B(E)\cap F$ and $B^{-1}(F)\cap E$ are non-empty with positive probability, it is natural to ask about their Hausdorff dimensions. In general, those Hausdorff dimensions are not necessarily a.s. constant, like it was illustrated by Khoshnevisan and Xiao  through an example -due to Gregory Lawler- in the introduction of their paper \cite{KX2015}. For this reason we seek to give some upper and lower bounds of the $L^{\infty}\left(\mathbb{P}\right)$-norm of those Hausdorff dimensions. We note that when $B$ is a standard Brownian motion, Khoshnivisan and Xiao have obtained in \cite{KX2015} an explicit formula for the essential supremum of the Hausdorff dimension of  $E\cap B^{-1}(F)$ and $B(E)\cap F$ in terms of $d$ and $\dim_{\rho_{\delta}}\left(E\times F\right)$. %\footnote{This is the Hausdorff dimension associated to the metric $\rho_{\delta}$ defined in \eqref{parabolic metric} with $\delta(t,s):=|t-s|^{1/2}$}.
A generalization of their result to the fractional Brownian case was proven by Erraoui and Hakiki in \cite{Erraoui&Hakiki 3}, by giving only an upper and lower bound. Our goal is to generalize those results to the Gaussian processes satisfying \eqref{commensurate} and under general conditions on $\gamma$.

%Our work will be a continuation and an improvement of the works started by one of the authors in the paper \cite{Eulalia&Viens2013}. 

\section{Preliminaries}
 Let $ \{B_0(t),t\in \mathbb{R}_+\}$ be a real valued centered continuous Gaussian process defined on a complete  probability space $(\Omega,\mathcal{F},\mathbb{P})$, defining the canonical metric $\delta$ of $B$ on $(\mathbb{R}_+)^2$ by $$ \delta(s,t):=\left(\mathbb{E}(B_0(s)-B_0(t))^2\right)^{1/2}.$$
Let $\gamma$ be continuous increasing function on $\mathbb{R}_+$ (or possibly only on a neighborhood of $0$ in $\mathbb{R}_+$), such that $ \lim_0 \gamma =0$ and for some constant $l\geq 1$ we have, for all $s,t \in \mathbb{R}_+$ 
\begin{align}
\left\{\begin{array}{rcl}
    &\mathbb{E}\left(B_0(t)\right)^2=\gamma^2(t) \\
    \\
   & and \\
   \\
     &1/\sqrt{l}\,\gamma\left(|t-s|\right)\leq \delta(t,s)\leq \sqrt{l}\,\gamma(|t-s|).    
\end{array}\right.\label{commensurate}
\end{align}
Now, we consider the $\mathbb{R}^d$-valued process $B=\{B(t): t\in \mathbb{R}_+\}$ defined by 
\begin{align}\label{d iid copies}
    B(t)=(B_1(t),...,B_d(t)),\quad t\in \mathbb{R}_+,
\end{align}
where $B_1,...,B_d$ are independent copies of $B_0$. Let us consider the following hypotheses 
%they are the same $2.1$ and $2.2$ stated in \cite{Eulalia&Viens2013}. 
 
 \begin{hyp}\label{H1} The increasing function $\gamma$ is concave in a neighborhood of the origin, and for all $0<a<b<\infty$, there exists $\varepsilon>0$ such that $\gamma^{\prime}(\varepsilon^+)>\sqrt{l}\, \gamma^{\prime}(a^{-})$.
 \end{hyp}
 \begin{hyp} \label{Hyp2} For all $0<a<b<\infty$, there exists $\varepsilon>0$ and $c_0\in (0,1/\sqrt{l})$, such that for all $s,t\in [a,b]$ with $0<t-s\leq \varepsilon$,
 $$ \gamma(t)-\gamma(s)\leq c_0\gamma(t-s).$$
 \end{hyp}
 It was proven in Lemma 2.3 in \cite{Eulalia&Viens2013} that Hypothesis \ref{H1} implies Hypothesis \ref{Hyp2}, and that under the strong condition $\gamma^{\prime}(0^{+})=\infty$, the constant $c_0$ in Hypothesis \ref{Hyp2} can be chosen arbitrary small. The following lemma  establish, under Hypothesis \ref{Hyp2}, the so called two point local-nondeterminism. It was proven in Lemma 2.4 of \cite{Eulalia&Viens2013}.
\begin{lemma}\label{lem two pts LND}
Assume Hypothesis \ref{Hyp2}. Then for all $0<a<b<\infty$, there exist $\varepsilon>0$ and positive constant $c_1$ depending only on $a,b$, such that for all $s,t\in [a,b]$ with $|t-s|\leq \varepsilon$,

\begin{align}\label{two pts LND}
    Var\left(B_0(t)/B_0(s)\right)\geq c_1\, \gamma^2(|t-s|).
\end{align}
\end{lemma}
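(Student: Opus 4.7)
The plan is to exploit the explicit Gaussian conditional variance formula and translate the resulting expression into quantities that are controlled by Hypothesis \ref{Hyp2} together with the commensurability \eqref{commensurate}. Since $(B_0(s),B_0(t))$ is jointly centered Gaussian, the conditional variance $\operatorname{Var}(B_0(t)\mid B_0(s))$ equals $\gamma^2(t)-\operatorname{Cov}(B_0(s),B_0(t))^2/\gamma^2(s)$, so it suffices to obtain a sharp lower bound on the numerator $\gamma^2(t)\gamma^2(s)-\operatorname{Cov}(B_0(s),B_0(t))^2$.

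The key algebraic step is to use $\delta^2(s,t)=\gamma^2(s)+\gamma^2(t)-2\operatorname{Cov}(B_0(s),B_0(t))$ to substitute for the covariance and then to factor the numerator as a difference of squares,
$$\gamma^2(t)\gamma^2(s)-\operatorname{Cov}(B_0(s),B_0(t))^2=\frac{1}{4}\bigl[\delta^2(s,t)-(\gamma(t)-\gamma(s))^2\bigr]\bigl[(\gamma(t)+\gamma(s))^2-\delta^2(s,t)\bigr].$$
I would then bound each factor separately. For the first factor, \eqref{commensurate} gives $\delta^2(s,t)\geq \gamma^2(|t-s|)/l$, while Hypothesis \ref{Hyp2} gives $(\gamma(t)-\gamma(s))^2\leq c_0^2\,\gamma^2(|t-s|)$ once $|t-s|\leq\varepsilon$, so the first factor is at least $(1/l-c_0^2)\gamma^2(|t-s|)$. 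For the second factor, since $\gamma$ is increasing and positive on $(0,\infty)$ and $s,t\in[a,b]$, one has $(\gamma(t)+\gamma(s))^2\geq 4\gamma^2(a)$; after shrinking $\varepsilon$ if necessary, continuity of $\gamma$ at $0$ together with \eqref{commensurate} gives $\delta^2(s,t)\leq l\gamma^2(\varepsilon)\leq 2\gamma^2(a)$, so the second factor is at least $2\gamma^2(a)$. Combining these estimates and dividing by $\gamma^2(s)\leq \gamma^2(b)$ yields the conclusion with an explicit constant such as $c_1=(1/l-c_0^2)\gamma^2(a)/(2\gamma^2(b))$.

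The main obstacle — and the reason Hypothesis \ref{Hyp2} is phrased with the seemingly peculiar quantitative constraint $c_0<1/\sqrt{l}$ rather than a generic $c_0<1$ — is precisely the positivity of the first factor. If one only had $c_0<1$, the commensurability loss encoded in $1/l$ would be enough to destroy the sign in $1/l-c_0^2$, and the factorization approach would collapse. Everything else in the argument is quantitative bookkeeping on the interval $[a,b]$, so once this positivity is secured the rest of the proof reduces to choosing $\varepsilon$ small enough that the boundary terms involving $\gamma^2(\varepsilon)$ are negligible against $\gamma^2(a)$.
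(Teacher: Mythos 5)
Your proof is correct, and it follows essentially the same route as the argument the paper relies on (the lemma is quoted from Lemma 2.4 of Nualart--Viens \cite{Eulalia&Viens2013}, which uses exactly this conditional-variance formula, the difference-of-squares factorization of $\gamma^2(t)\gamma^2(s)-\operatorname{Cov}(B_0(s),B_0(t))^2$, and the interplay between $c_0<1/\sqrt{l}$ and the commensurability lower bound $\delta^2(s,t)\geq \gamma^2(|t-s|)/l$). Your identification of why $c_0$ must be smaller than $1/\sqrt{l}$ rather than $1$ is precisely the point of Hypothesis \ref{Hyp2}.
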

We denote by $B_{\delta}(t,r)=\{s\in \mathbb{R}_+: \delta(s,t)\leq r\}$ the closed ball of center $t$ and radius $r$. %We note first that \eqref{commensurate} implies that the closed ball $B_{\delta}(t,r)$ behave like the interval $[t-\gamma^{-1}(r),t+\gamma^{-1}(r)]$, because we have $$[t-\gamma^{-1}(r/l),t+\gamma^{-1}(r/l)]\subset B_{\delta}(t,r)\subset   [t-\gamma^{-1}(l\,r),t+\gamma^{-1}(l\,r)].$$   
The following lemma is useful for the proof of the upper bounds for the Hausdorff dimension in \ref{Hausd dim image}. It can be seen also as an improvement of both of Proposition 3.1. and Proposition 4.1. in \cite{Eulalia&Viens2013}. The proof that we give here rely on the arguments of the classical Gaussian path regularity theory, like it was done in Lemma 3.1 in \cite{Bierme&Xiao} and Lemma 7.8 in \cite{Xiao2009}.
\begin{lemma}\label{lem upper bound}
Let $0<a<b<\infty$, and $I:=[a,b]$.
Then for all $M>0$, there exist a positive constants $c_2$ and $r_0$ such that for all $r\in (0,r_0)$, $t\in I$  and $z\in [-M,M]^d$ we have 
\begin{equation}
		\mathbb{P}\left\{\inf _{ s\in B_{\delta}(t,r)\cap I}\|B(s)-z\| \leqslant r\right\} \leqslant c_2 (r+f_{\gamma}(r))^{d},\label{estim small ball 1}
		\end{equation}
		where $\|\cdot\|$ is the Euclidean metric, and $f_{\gamma}$ is defined by $$f_{\gamma}(r):= r\,\sqrt{\log 2}+\int_0^{1/2}\gamma\left(\gamma^{-1}(r\sqrt{l})\,y\right)\frac{dy}{y \sqrt{\log(1/y)}}.$$ 
\end{lemma}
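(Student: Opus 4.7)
The plan is to follow the classical Gaussian path-regularity approach of Lemma~3.1 in Bierme-Xiao and Lemma~7.8 in Xiao (2009). Set $M_{*}:=\sup_{s\in B_{\delta}(t,r)\cap I}\|B(s)-B(t)\|$. The starting point is a triangle inequality: if the infimum on the left-hand side of \eqref{estim small ball 1} is (nearly) attained at some $s^{*}$, then $\|B(t)-z\|\leq \|B(t)-B(s^{*})\|+r\leq M_{*}+r$, hence
\[
\mathbb{P}\!\left\{\inf_{s\in B_{\delta}(t,r)\cap I}\|B(s)-z\|\leq r\right\}\;\leq\;\mathbb{P}\{\|B(t)-z\|\leq r+M_{*}\}.
\]
To decouple $B(t)$ from $M_{*}$ I would use the Gaussian regression $B(s)=\alpha_{s}B(t)+Y(s)$ with $\alpha_{s}:=\mathbb{E}[B_{0}(s)B_{0}(t)]/\gamma^{2}(t)$ and $\{Y(s)\}$ independent of $B(t)$ (well defined since $\gamma^{2}(t)\geq \gamma^{2}(a)>0$ on $I$). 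Because $\alpha_{t}=1$ and $Y(t)=0$, we obtain $M_{*}\leq \Delta\,\|B(t)\|+\widetilde M$ with $\Delta:=\sup_{s\in B_{\delta}(t,r)\cap I}|\alpha_{s}-1|$ and $\widetilde M:=\sup_{s\in B_{\delta}(t,r)\cap I}\|Y(s)\|$, the latter independent of $B(t)$.

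The identity $\mathbb{E}[B_{0}(s)B_{0}(t)]-\gamma^{2}(t)=\tfrac12(\gamma^{2}(s)-\gamma^{2}(t)-\delta^{2}(s,t))$ combined with \eqref{commensurate} yields $\Delta=O(r)$ as $r\downarrow 0$, uniformly in $t\in I$. Conditioning on $\sigma(Y)$, using the uniform boundedness on $I$ of the density of $B(t)\sim \mathcal{N}(0,\gamma^{2}(t)I_{d})$, and using $\|z\|\leq M\sqrt d$ to absorb the $\Delta\|B(t)\|$ correction (valid once $\Delta<1/2$, which holds for $r\leq r_{0}$ small), we arrive at
\[
\mathbb{P}\{\|B(t)-z\|\leq r+M_{*}\}\;\leq\;C\,\mathbb{E}\!\left[(r+\widetilde M)^{d}\right].
\]
Applying the Gaussian $L^{d}$-moment inequality $\mathbb{E}[\widetilde M^{d}]^{1/d}\leq c(\mathbb{E}[\widetilde M]+\sigma\sqrt d)$, valid because $\sigma^{2}\leq \sup_{s}\mathrm{Var}(Y(s))\leq r^{2}$, reduces the proof to showing $\mathbb{E}[\widetilde M]\lesssim f_{\gamma}(r)$.

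For this last estimate I would apply Dudley's entropy inequality componentwise to $Y$. A direct computation gives $\mathrm{Var}(Y_{0}(s)-Y_{0}(s'))\leq \delta^{2}(s,s')$, so the covering numbers of $Y$ are dominated by those of $\delta$. Commensurability \eqref{commensurate} in turn compares $\delta$-balls with Euclidean intervals: $B_{\delta}(t,r)\cap I$ sits inside the Euclidean interval of half-length $\gamma^{-1}(r\sqrt{l})$ about $t$, while a Euclidean interval of half-length $\gamma^{-1}(\varepsilon/\sqrt{l})$ lies inside the $\delta$-ball of radius $\varepsilon$; therefore
\[
N\!\left(B_{\delta}(t,r)\cap I,\delta,\varepsilon\right)\;\lesssim\;\gamma^{-1}(r\sqrt{l})\big/\gamma^{-1}(\varepsilon/\sqrt{l}).
\]
Dudley yields $\mathbb{E}[\widetilde M]\lesssim \int_{0}^{\sqrt{l}\,r}\sqrt{\log N(B_{\delta}(t,r)\cap I,\delta,\varepsilon)}\,d\varepsilon$; the substitution $y=\gamma^{-1}(\varepsilon/\sqrt{l})/\gamma^{-1}(r\sqrt{l})$ rewrites this as $\int_{0}^{1}\sqrt{\log(1/y)}\,d\!\left[\gamma(\gamma^{-1}(r\sqrt{l})y)\right]$, and splitting at $y=1/2$ with an integration by parts produces the boundary contribution $\sqrt{\log 2}\,\gamma(\gamma^{-1}(r\sqrt{l})/2)\lesssim r$ together with precisely the integral $\int_{0}^{1/2}\gamma(\gamma^{-1}(r\sqrt{l})y)\,dy/(y\sqrt{\log(1/y)})$ featured in the definition of $f_{\gamma}(r)$; the tail piece on $[1/2,1]$ is absorbed using the monotonicity of $\gamma$.

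The main technical obstacle is the quantitative bound $\Delta=O(r)$ derived from \eqref{commensurate} alone, uniform in $t\in[a,b]$, which is needed to absorb the $\Delta\|B(t)\|$ correction when passing from $M_{*}$ to $\widetilde M$. The restriction $a>0$ enters crucially here (through $\gamma^{2}(t)\geq \gamma^{2}(a)>0$) and also in the uniform density bound for $B(t)$. Once $\Delta$ is controlled, the remaining ingredients---bounded density, $L^{d}$-moment inequality for Gaussian suprema, and the Dudley-to-$f_{\gamma}$ computation---combine directly to yield the asserted bound $c_{2}(r+f_{\gamma}(r))^{d}$, with $c_{2}$ depending on $a,b,l,d,M$.
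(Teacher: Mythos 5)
Your proposal is correct and follows essentially the same route as the paper: the regression decomposition $B_0(s)=c(s,t)B_0(t)+R(s)$ to decouple from $B_0(t)$, the bound $|1-c(s,t)|=O(\delta(s,t))$ to absorb the correction terms, the bounded-density estimate for $B(t)$, and Dudley's entropy integral with the change of variables and integration by parts that produces exactly $f_{\gamma}(r)$. The only cosmetic difference is that the paper first reduces to $d=1$ via independence of the coordinates (so only a first-moment bound on the supremum is needed), whereas you work directly in $\mathbb{R}^d$ and invoke a Gaussian $L^{d}$-moment inequality for $\widetilde M$; both are valid.
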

\begin{corollary}\label{cor estim small ball}
Assume that there exists $x_0,c_3$ such that 
\begin{align}
    \int_{0}^{1 / 2} \gamma(x y) \frac{d y}{y \sqrt{\log (1 / y)}} \leq c_3 \gamma(x)\label{condition raisonable}
\end{align}
for all $x\in [0,x_0]$. Then, there is some constant $c_4$ depending on $\gamma$, $I$, $r_0$, and $x_0$, such that for all $z\in [-M,M]^d$ and for all $r\in (0,r_0\wedge \gamma(x_0))$ we have 
\begin{align}
\mathbb{P}\left\{\inf _{ s\in B_{\delta}(t,r)\cap I}\|B(s)-z\| \leqslant r\right\} \leqslant c_4 r^{d}.\label{estim small ball 2}
\end{align}
\end{corollary}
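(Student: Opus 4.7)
The plan is to deduce Corollary \ref{cor estim small ball} as a direct consequence of Lemma \ref{lem upper bound} once the quantity $f_\gamma(r)$ is controlled by a constant multiple of $r$. Since Lemma \ref{lem upper bound} already gives the estimate $\mathbb{P}\{\inf_{s\in B_\delta(t,r)\cap I}\|B(s)-z\|\leq r\}\leq c_2(r+f_\gamma(r))^d$, it suffices to show that \eqref{condition raisonable} forces $f_\gamma(r)\leq C\,r$ for $r$ small enough, after which raising to the $d$-th power yields \eqref{estim small ball 2}.

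The key step is to make the change of variable $x=\gamma^{-1}(r\sqrt{l})$ inside the integral appearing in the definition of $f_\gamma$. By construction, this $x$ satisfies $\gamma(x)=r\sqrt{l}$, and the integrand in $f_\gamma(r)$ is exactly $\gamma(xy)/[y\sqrt{\log(1/y)}]$. I would then apply hypothesis \eqref{condition raisonable} at this $x$, which requires $x\in[0,x_0]$, i.e.\ $r\sqrt{l}\leq\gamma(x_0)$. This is where the restriction $r<r_0\wedge\gamma(x_0)$ enters (up to absorbing the factor $\sqrt{l}$ into $c_4$, or, if one prefers, redefining $r_0$ so that $r_0\sqrt{l}\leq\gamma(x_0)$; since $\gamma$ is continuous and increasing with $\lim_0\gamma=0$, this adjustment costs nothing). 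Under this restriction, \eqref{condition raisonable} gives
\begin{equation*}
\int_0^{1/2}\gamma\bigl(\gamma^{-1}(r\sqrt{l})\,y\bigr)\frac{dy}{y\sqrt{\log(1/y)}}\;\leq\;c_3\,\gamma\bigl(\gamma^{-1}(r\sqrt{l})\bigr)\;=\;c_3\sqrt{l}\,r.
\end{equation*}

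Combining with the first term in $f_\gamma$, one obtains $f_\gamma(r)\leq(\sqrt{\log 2}+c_3\sqrt{l})\,r$, hence $r+f_\gamma(r)\leq(1+\sqrt{\log 2}+c_3\sqrt{l})\,r$. Substituting back into Lemma \ref{lem upper bound} gives the desired bound with $c_4:=c_2(1+\sqrt{\log 2}+c_3\sqrt{l})^d$, depending only on $\gamma$, $I$, $r_0$, $x_0$ and $d$. There is no real obstacle here; the only point that needs a line of care is the book-keeping between $\gamma(x_0)$ and $\gamma(x_0)/\sqrt{l}$ in the admissible range of $r$, which is a purely cosmetic adjustment absorbed in the constants.
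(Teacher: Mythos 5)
Your proposal is correct and is exactly the argument the paper intends: the corollary is left as an immediate consequence of Lemma \ref{lem upper bound}, obtained by applying \eqref{condition raisonable} at $x=\gamma^{-1}(r\sqrt{l})$ to get $f_\gamma(r)\leq(\sqrt{\log 2}+c_3\sqrt{l})\,r$. Your remark about needing $r\sqrt{l}\leq\gamma(x_0)$ rather than $r\leq\gamma(x_0)$ is a legitimate (and correctly handled) bookkeeping point that the paper glosses over.
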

    It is easy to check that any power function $\gamma(x)=x^{H}$ with $H\in (0,1)$ satisfies \eqref{condition raisonable}. Moreover, we will show that \eqref{condition raisonable} is satisfied by the class of regularly varying functions, which include all power functions. First of all, a function $\gamma$ is said to be regularly function with index $0<\alpha<1$ if it can be written as $$ 
    \gamma(x)=x^{\alpha}\, L(x),
    $$
    where $L(x): [0,x_0)\rightarrow [0,\infty)$ is a slowly varying function near $0$ in the sense of Karamata, then it can be represented by 
    \begin{align}
        L(x)=\exp\left(\eta(x) + \int_{x}^{A}\frac{\varepsilon(t)}{t}dt\right),
    \end{align}
    where $\eta: [0,x_0)\rightarrow \mathbb{R}$, $\varepsilon: [0,A)\rightarrow \mathbb{R}$ are Borel measurable and bounded functions, and there exists a finite constant $c_0$ such that  
    $$ 
    \lim_{x\rightarrow 0}\eta(x)=c_0, \quad \text{ and  } \quad \lim_{x\rightarrow 0}\varepsilon(x)=0.
    $$
    For more properties of regularly varying functions see Seneta \cite{Seneta} or Bingham et al. \cite{Bingham et al}.
    
\begin{proposition}\label{prop RVF}
Let $\gamma$ be a regularly varying function near 0, with index $0<\alpha<1$. Then $\gamma$ satisfies \eqref{condition raisonable}.
\end{proposition}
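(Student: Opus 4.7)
The plan is to exploit the Karamata representation of $L$ to reduce the inequality to a uniform estimate of Potter type, and then control an explicit power of $y$ by direct integration.

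First I would substitute $\gamma(x)=x^{\alpha}L(x)$ into both sides of \eqref{condition raisonable}. Writing $\gamma(xy)=x^{\alpha}y^{\alpha}L(xy)$ and dividing by $\gamma(x)=x^{\alpha}L(x)$, the inequality reduces to showing that
$$
\int_{0}^{1/2} y^{\alpha-1}\,\frac{L(xy)}{L(x)}\,\frac{dy}{\sqrt{\log(1/y)}} \le c_3
$$
uniformly for $x$ in a right-neighborhood of $0$. The task is therefore to dominate $L(xy)/L(x)$ by a function of $y$ alone that is integrable against $y^{\alpha-1}\,dy$.

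Using the Karamata representation stated in the paper, a short calculation gives
$$
\frac{L(xy)}{L(x)} = \exp\!\left( \eta(xy) - \eta(x) + \int_{xy}^{x} \frac{\varepsilon(t)}{t}\, dt \right).
$$
Since $\eta$ is bounded on $[0,x_0)$, the first two terms contribute at most a multiplicative constant $K_1 := \exp(2\sup|\eta|)$. For the integral term I would fix any $\delta\in(0,\alpha)$. Because $\varepsilon(t)\to 0$ as $t\to 0$, there exists $x_1>0$ such that $|\varepsilon(t)|\le \delta$ for all $t\in[0,x_1]$; then for $x\le x_1$ and $y\in(0,1/2]$, the interval $[xy,x]$ lies in $[0,x_1]$ and
$$
\left|\int_{xy}^{x}\frac{\varepsilon(t)}{t}\,dt\right| \le \delta \int_{xy}^{x}\frac{dt}{t} = \delta\log(1/y).
$$
Combining these two estimates yields the Potter-type bound $L(xy)/L(x)\le K_1\,y^{-\delta}$, valid uniformly for $x\le x_1$ and $y\in(0,1/2]$.

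Substituting this bound back into the reduced inequality, the quantity to control is dominated by
$$
K_1\int_{0}^{1/2}\frac{y^{\alpha-1-\delta}}{\sqrt{\log(1/y)}}\,dy,
$$
which is finite because $\alpha-1-\delta>-1$ guarantees integrability at $y=0$, while $\sqrt{\log(1/y)}\ge\sqrt{\log 2}>0$ on $(0,1/2]$ keeps the logarithmic factor harmless. Taking $c_3$ to be this finite value completes the argument. The only delicate step is the Potter-type domination of $L(xy)/L(x)$, and the explicit Karamata representation stated in the excerpt reduces it to an elementary computation; no deeper regular-variation or Tauberian machinery is needed.
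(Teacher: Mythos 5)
Your proof is correct, and it takes a genuinely different route from the paper's. You derive a Potter-type bound $L(xy)/L(x)\le K_1\,y^{-\delta}$ directly from the Karamata representation (the bounded $\eta$-terms give the constant, the $\varepsilon$-integral gives $\delta\log(1/y)$), and then conclude by a direct integration of $y^{\alpha-1-\delta}$; every step is elementary and the uniformity in $x\le x_1$ is explicit. The paper instead discards the factor $1/\sqrt{\log(1/y)}$ at the outset, changes variables to reduce the left side of \eqref{condition raisonable} to $c_1\int_0^x L(z)z^{\alpha-1}\,dz$, and then shows $\limsup_{x\downarrow 0}\int_0^x L(z)z^{\alpha-1}\,dz/\gamma(x)<\infty$ by L'H\^opital's rule, using the derivative formula $\gamma'(x)=x^{\alpha-1}L(x)(\alpha-\varepsilon(x))$; this forces the authors to first invoke Adamovi\'c's smoothing result to assume $L\in\mathcal{C}^{\infty}$ and to drop the $\eta$-part of the representation. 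Your argument avoids both of those reductions and needs no differentiability of $L$, at the cost of not producing the sharp limiting constant $1/\alpha$ that L'H\^opital yields; for the purpose of verifying \eqref{condition raisonable}, where only some finite $c_3$ is required, your approach is arguably the cleaner one.
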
    

\begin{proof}
First, using the representation $\gamma(x)=x^{\alpha}\,L(x)$ for all $x\in (0,x_0)$, and thanks to the result of Adamović; see Proposition 1.3.4 in \cite{Bingham et al}, we may assume without loss of generality that the slowly varying part $L(\cdot)$ is $\mathcal{C}^{\infty}$. Since the constant $c_0$ is finite and we are interested only to the asymptotic behavior of $\gamma$ near 0, we restrict our attention to the case where the slowly varying part is given by 
\begin{align}
    L(x)=M_0\, \exp\left(\int_x^A\frac{\varepsilon(t)}{t}dt\right),
\end{align}
where $M_0>0$. Now, we check the condition \eqref{condition raisonable}; 
\begin{align}
    \begin{aligned}
        \dint_0^{1/2}\gamma(xy)\frac{dy}{y\sqrt{\log(1/y)}}&=x^{\alpha}\,\dint_0^{1/2}L(xy)\frac{dy}{y^{1-\alpha}\sqrt{\log(1/y)}}\\
    &\leq \frac{x^{\alpha}}{\log^{1/2}(2)}\, \dint_0^{1/2}L(xy)\frac{dy}{y^{1-\alpha}}\\
    &\leq c_1\, \dint_0^xL(z)\frac{dz}{z^{1-\alpha}}.
    \end{aligned}
\end{align}
Then, it suffice to show that $\limsup_{x\downarrow 0}\frac{\int_0^xL(z)z^{\alpha-1}dz}{\gamma(x)}<\infty$. It is easy to check that $\gamma^{\prime}(x)=x^{\alpha-1}\,L(x)\left(\alpha-\varepsilon(x)\right)$. Thus we may apply the Hospital rule to get that 
$$
\lim_{x\downarrow 0}\frac{\int_0^xL(z)z^{\alpha-1}dz}{\gamma(x)}=\lim_{\downarrow 0}\frac{x^{\alpha-1}\,L(x)}{\gamma^{\prime}(x)}=1/\alpha<\infty,
$$
which finishes the proof.
\end{proof}

\begin{example}\label{exple of RVF}
%It is known from \cite{Eulalia&Viens2013} that there an
Here is some families of regularly varying functions that are immediately satisfying \eqref{condition raisonable} due to Proposition \ref{prop RVF}
\begin{itemize}
    \item[i)]$\gamma_{\alpha,\beta}(r):=r^{\alpha}\log^{\beta}(1/r)$ for $\beta \in \mathbb{R}$ and $\alpha\in (0,1)$,
    \item[ii)]$\gamma_{\alpha,\beta}(x):=x^{\alpha}\,\exp\left(\log^{\beta}(1/x)\right)$ for $\beta \in (0,1)$ and $\alpha \in (0,1)$,
    \item[iii)]$\gamma_{\alpha}(x):=x^{\alpha}\, \exp\left(\frac{\log(1/x)}{\log\left(\log(1/x)\right)}\right)$ for $\alpha\in (0,1)$.
\end{itemize}
\end{example}

\begin{remark}\label{ex satisfy reas cond}
 Notice that condition \eqref{condition raisonable} is also satisfied by the class of all "gauge" functions considered by Sanz-solé and Calleja in \cite{Marta&Calleja2021} %to give criteria for hitting probabilities of anisotropic Gaussian random fields
 . Indeed, they considered a "gauge" function $\gamma(\cdot)$ which satisfies that, for any $r,\eta\in [0,\varepsilon_0]$, with $\varepsilon_0$ sufficiently small, we have
 \begin{equation}
     \gamma(r\eta)\leq \varphi(\eta)\gamma(r) \quad \text{ and }\quad \gamma^{\prime}(r\eta)\leq \frac{1}{r}\Psi(\eta)\gamma(r\eta),
 \end{equation}
where $\varphi$ and $\Psi$ are two Borel functions such that, for some $p\geq 1$ and $\alpha \geq 1$, 
\begin{equation}
    \dint_0^1\log^{p}\left(1+\frac{c_5}{r^{2\alpha}}\right)\varphi(r)\Psi(r)\,dr<\infty.
\end{equation}
Then under these conditions, and by making use of the integration by parts we have

\begin{equation}
    \begin{aligned}
        \dint_0^{\varepsilon_0}\gamma(x\eta)\frac{d\eta}{\eta\sqrt{\log(1/\eta)}}&=-\gamma(\varepsilon_0\, x)\sqrt{\log (1/\varepsilon_0)}+x\, \dint_0^{\varepsilon_0}\sqrt{\log(1/\eta)}\gamma^{\prime}(x\eta)d\eta\\
        &\leq \dint_0^{\varepsilon_0}\sqrt{\log\left(1/\eta\right)}\Psi(\eta)\gamma(x\eta)d\eta\\
        &\leq \gamma(x) \dint_0^{\varepsilon_0}\sqrt{\log\left(1/\eta\right)}\Psi(\eta)\varphi(\eta)d\eta\\
        &\leq c_6\,\gamma(x)  \dint_0^1\log^{p}\left(1+\frac{c_5}{\eta^{2\alpha}}\right)\varphi(\eta)\Psi(\eta)\,d\eta=c_7\, \gamma(x).
    \end{aligned}
\end{equation}
Hence $\gamma$ satisfies \eqref{condition raisonable}.
%\begin{itemize}
   % \item[ii)] It is natural to consider the family of regularly varying functions as a generalization of the previous class in $(i)$...(j'ai déjà montré que cette classe est vérifie la condition \eqref{condition raisonable}, je vais inclure la preuve prochainement ) 
    %\item[iii)]
%\end{itemize}
\end{remark}

\begin{proof}[proof of Lemma \ref{lem upper bound}] 
Since the coordinate processes of $B$ are independent copies of $B_0$, it is sufficient to prove  \eqref{estim small ball 1} when $d=1$. Note that for any $s,t\in I$, we have
\begin{align}
\mathbb{E}\left(B_{0}(s) \mid B_{0}(t)\right)=\frac{\mathbb{E}\left(B_{0}(s) B_{0}(t)\right)}{\mathbb{E}\left(B_{0}(t)^{2}\right)} B_{0}(t):=c(s, t) B_{0}(t).
\end{align}
Then the Gaussian process $(R(s))_{s\in I}$ defined by 
\begin{align}
    R(s):=B_{0}(s)-c(s, t) B_{0}(t),\label{process minus projection}
\end{align}
is independent of $B_{0}(t)$. Let
$$
Z(t, r)=\sup _{s \in B_{\delta}(t, r) \cap I}\left|B_{0}(s)-c(s, t) B_{0}(t)\right|.
$$
Then
\begin{align}
\begin{aligned}
\mathbb{P}&\left\{\inf _{s \in B_{\delta}(t, r) \cap I}\left|B_{0}(s)-z\right| \leq r\right\} \\
&\leq \mathbb{P}\left\{\inf _{s \in B_{\rho}(t, r) \cap I}\left|c(s, t)\left(B_{0}(t)-z\right)\right| \leq r+Z(t, r)+\sup _{s \in B_{\delta}(t, r) \cap I}|(1-c(s, t)) z|\right\}
\end{aligned}\label{upper bound 1}
\end{align}
By the Cauchy-Schwarz inequality and \eqref{commensurate}, we have  for all $s, t \in I$,
\begin{align}\label{estim correlation}
    |1-c(s, t)|=\frac{\left|\mathbb{E}\left[B_{0}(t)\left(B_{0}(t)-B_{0}(s)\right)\right]\right|}{\mathbb{E}\left(B_{0}(t)^{2}\right)} \leq c_1\, \delta(s,t).
\end{align}
Let $r_0:=1/2c_1$, then \eqref{estim correlation} implies that for all $0<r<r_0$ and $s\in B_{\delta}(t,r)\cap I$, we have $1/2\leq c(s,t)\leq 3/2$. Furthermore, for $0<r\leq r_0$, $s\in B_{\delta}(t,r)$, and $z\in [-M,M]^d$, we have $$ |(1-c(s,t))z|\leq c_1\,M\,r.$$
Combining this inequality with \eqref{upper bound 1}, we obtain that for all $z\in [-M,M]^d$

\begin{align}
\begin{aligned}
    \mathbb{P}\left\{\inf _{s \in B_{\delta}(t, r) \cap I}\left|B_{0}(s)-z\right| \leq r\right\}&\leq \mathbb{P}\left\{\left|B_0(t)-z\right|\leq 2\left(M c_1+1\right)r+2 Z(t,r)\right\}\\
    &\leq c_2\left(r+\mathbb{E}\left(Z(t,r)\right)\right),
\end{aligned}
\end{align}
where the last inequality is due to the independence between $B_0(t)$ and $Z(t,r)$, we note also that the constant $c_2$ depends on $M$, $a$, $b$, and $l$ only.

It remains the estimation of the term $\mathbb{E}\left(Z(t,r)\right)$. Let  us consider $d(\cdot,\cdot)$ to be the canonical metric of the centered Gaussian process $R(s)$ defined above in \eqref{process minus projection}. From the Lipschitz property of the projection operator, we obtain that 
\begin{align}
    d(s,s^{\prime})\leq \delta(s,s^{\prime})\label{comaraison metric}
\end{align}
for all $s,s^{\prime}\in B_{\delta}(t,r)\cap I$. Denote by $D:=\sup_{s,s^{\prime}\in B_{\delta}(t,r)\cap I}d(s,s^{\prime})$ the $d$-diameter, and $N_d(B_{\delta}(t,r),\varepsilon)$ the smallest number of $d$-balls of raduis $\varepsilon$ by which we can cover $B_{\delta}(t,r)$. For any fixed $r>0$, we denote by $\varepsilon_0(r)$ the quantity $$ \varepsilon_0(r):=\inf\{\varepsilon>0: N_d\left(B_{\delta}(t,r),\varepsilon\right)<2\},$$ which is immediately smaller than $r$.  From the assumption \eqref{commensurate} and the inequality \eqref{comaraison metric},
%(we remark from this last inequality that $N_d\leq N_{\delta} $)
we obtain
\begin{align}
    D\leq r\quad \text{ and } \quad N_d(B_{\delta}(t,r),\varepsilon)\leq 2\times\operatorname{1}_{\{\varepsilon_0(r)\leq \varepsilon\leq r\}} + \, \frac{\gamma^{-1}(r\, \sqrt{l})} {\gamma^{-1}(\varepsilon/\sqrt{l})}\times\operatorname{1}_{\{0<\varepsilon<\varepsilon_0(r)\}}.\label{estim entropy}
\end{align}
It follows from \eqref{estim entropy} and the classical entropy upper bound of R. Dudley (see Corollary 4.15 \cite{Adler 2}) that, for some universal constant $c_3$,  
\begin{align}
   \begin{aligned}
    \mathbb{E}\left(Z(t,r)\right)&\leq c_3  \int_{0}^{D}\sqrt{\log N_d\left(B_{\delta}(t,r),\varepsilon\right)}d\,\varepsilon\\
    & \leq c_3\left(\int_0^{\varepsilon_0(r)}\sqrt{\log \frac{\gamma^{-1}(r\, \sqrt{l})}{\gamma^{-1}(\varepsilon/\sqrt{l})}}d\,\varepsilon+\sqrt{\log 2}\int_{\varepsilon_0(r)}^rd\varepsilon\right)\\
    &\leq  c_3\, \left(\sqrt{l}\int_0^{\gamma^{-1}\left( \sqrt{l}\,\varepsilon_0(r)\right)}\sqrt{\log \frac{\gamma^{-1}(r\sqrt{l})}{\eta}}d\gamma(\eta)+\sqrt{\log 2}\, r\right)\\
    &\leq c_3\,\left( \sqrt{\log 2}\, r+ \sqrt{l} \left[ \int_0^{\frac{1}{2}\gamma^{-1}\left( \sqrt{l}\, r\right)}\sqrt{\log \frac{\gamma^{-1}(r\sqrt{l})}{\eta}}d\gamma(\eta) +\sqrt{\log 2}\left( r\sqrt{l}-\gamma\left(\gamma^{-1}(r\sqrt{l})/2\right)\right)\right]\right)
   \end{aligned}\label{estim esp entropy}
\end{align}
where we used only \eqref{estim entropy} and a change of variable. Thanks to the continuity of the process $B_0$, which imply that $\lim_{\eta\rightarrow \infty}\gamma(\eta)\left(\log 1/\eta \right)^{1/2}=0$ (see for example \cite{Talagrand}). Then by using the integration by parts and another change of variables, we get 
\begin{align}
    \int_0^{\frac{1}{2}\gamma^{-1}(r \sqrt{l})}\sqrt{\log \frac{\gamma^{-1}(r\sqrt{l})}{\eta}}d\gamma(\eta)=\gamma\left(\frac{\gamma^{-1}(r\sqrt{l})}{2}\right)\sqrt{\log 2}+\int_0^{1/2}\gamma\left(\gamma^{-1}(r\sqrt{l})y\right)\frac{dy}{y\sqrt{\log(1/y)}}.\label{integration by part}
\end{align}
Then by combining \eqref{integration by part} and \eqref{estim esp entropy}, we get 
$$ \mathbb{E}\left(Z(t,r)\right)\leq c_3\, (l+1) f_{\gamma}(r),$$
which give that
\begin{align}
    \mathbb{P}\left\{\inf _{ s\in B_{\delta}(t,r)\cap I}\|B(s)-z\| \leqslant r\right\} \leqslant c_4 (r+f_{\gamma}(r))^{d},
\end{align}
where 
%$c_4:=\frac{4(Mc_1+1)(c_3+1)(l+1)}{(2\pi)^{1/2}\gamma(a)}$. 
$c_4$ depends on $a$, $b$, $l$ and $M$. This finishes the proof. 
\end{proof}

\section{Hausdorff dimension of the image set $B(E)$}
\subsection{Hausdorff measure and dimension associated to the canonical metric}
Before giving an explicit formula for the Hausdorff dimension of the image $B(E)$ under some general conditions on $\gamma$ %(which is important for the next section)
, we need first to define the Hausdorff measure and dimension associated to the canonical metric $\delta$ of the process $B$. For $\beta>0$ and $E \subset \mathbb{R}_+$, the $\beta$-dimensional Hausdorff measure of $E$ with respect to the metric $\delta$ is defined by 
\begin{equation}
\mathcal{H}_{\delta}^{\beta}(E)=\lim_{\eta \rightarrow 0}\inf \left\{\sum_{n=1}^{\infty}\left(2 r_{n}\right)^{\beta}: E \subseteq \bigcup_{n=1}^{\infty} B_{\delta}\left(r_{n}\right), r_{n} \leqslant \eta \right\}.\label{parabolic Hausdorff measure 0}
\end{equation}
The Bessel-Riesz type capacity of order $\beta$ on  the metric space $(\mathbb{R}_+,\delta)$ is defined by 
	\begin{equation}
	\mathcal{C}_{\delta,\beta}(E)=\left[\inf _{\nu \in \mathcal{P}(E)} \mathcal{E}_{\delta,\beta}(\nu)\right]^{-1},\label{delta capacity}
	\end{equation}
	where $\mathcal{P}(E)$ is the family of probability measures carried by $E$, and $\mathcal{E}_{\delta,\beta}(\nu)$ denote the $\beta$-energy of a measure $\nu \in \mathcal{P}(E)$ in the metric space $(\mathbb{R}_+,\delta)$, which is defined as  $$ \mathcal{E}_{\delta,\beta}(\nu):= \int_{\mathbb{R}_+} \int_{\mathbb{R}_+} \varphi_{\beta}(\delta(t,s)) \nu(d t) \nu(d s),$$
	and the function $\varphi_{\beta}:(0,\infty)\rightarrow (0,\infty)$ is defined by
	\begin{equation}
	\varphi_{\beta}(r)=\left\{\begin{array}{ll}{r^{-\beta}} & {\text { if } \beta>0} \\ {\log \left(\frac{e}{r \wedge 1}\right)} & {\text { if } \beta=0} \\ {1} & {\text { if } \beta<0}\end{array}\right..\label{radial kernel}
	\end{equation}
The $\delta$-Hausdorff dimension associated to the Hausdorff measure $\mathcal{H}_{\delta}^{\beta}(\cdot)$ is defined as 
  \begin{align}
  \begin{aligned}
  \dim_{\delta}(E):=\sup\left\{\beta: \mathcal{H}_{\delta}^{\beta}(E) >0 \right\}.
  \end{aligned}\label{dim delta}
  \end{align}
There exists also an alternative expression given through the Bessel-Riesz capacities by
\begin{align}\label{altern dim delta}
    \dim_{\delta}(E)=\sup\left\{\beta: \mathcal{C}_{\delta,\beta}(E)>0 \right \},
\end{align}
Notice that the lower inequality (i.e. $\dim_{\delta}(E)\geq \sup\left\{\beta: \mathcal{C}_{\delta,\beta}(E)>0 \right \}$) follows from an application of the energy method (see for example Theorem 4.27 in \cite{Peres&Morters}), and the upper inequality holds from an application of the Frostman's Lemma in the metric space $(\mathbb{R}_+,\delta)$. Indeed, It was proven in \cite{Ho95} that for any general metric space $(E,\delta)$, we have  
\begin{align}\label{dim Frost}
     \dim_{\delta}(E)=\sup\left\{\beta / \exists r_0>0, c_0>0, \text{ and } \nu \in \mathcal{P}(E) \text{ s. t.}\quad \nu\left(B_{\delta}(x,r)\right)\leq c_0\, r^{\beta} \text{ for all } r<r_0,\, x\in E \right\}.
\end{align}
One can see for example Proposition $5$ and  Note $12$ in \cite{Ho95} for a good understanding of this last formulation. Then considering this definition, we can prove now the remaining inequality in \eqref{altern dim delta}. 
%namely "$\dim_{\delta}(E)\leq\sup\{\beta: \mathcal{C}_{\delta,\beta}(E)>0\}$". Indeed,
Let $\alpha<\dim_{\delta}(E)$, and we fix some $\beta\in (\alpha,\dim_{\delta}(E))$. By using the equality \eqref{dim Frost}, there exists $\nu \in \mathcal{P}(E)$, $0<r_0<1$, and $0<c_0<\infty$ such that $\nu\left(B_{\delta}(x,r)\right)\leq c_0\, r^{\beta}$ for all $r<r_0$. For a fixed $t\in E$, since $\nu$ has no atom, we make the following decomposition
\begin{align}\label{decomp energy}
    \begin{aligned}
    \int_E\frac{\nu(ds)}{\delta(t,s)^{\alpha}}=\sum_{k=1}^{\infty}\int_{\delta(t,s)\in (2^{-k},2^{-k+1}]}\frac{\nu(ds)}{\delta(t,s)^{\alpha}}&\leq \sum_{k=1}^{\infty}2^{k\alpha}\nu\left(B_{\delta}(t,2^{-k+1})\right)\\
    &\leq c_1\, \sum_{k=1 }^{\infty}2^{-k(\beta-\alpha)},
    \end{aligned}
\end{align}
with $c_1=2^{\beta}\,c_0$. Since $\alpha<\beta$ the last sum is finite and independent of $t\in E$. Hence
$$\mathcal{E}_{\delta,\alpha}(\nu):=\int\int\frac{\nu(ds)\nu(dt)}{\delta(t,s)^{\alpha}}<\infty,$$
which finishes the proof.

\begin{remark}\label{examples of delta-dim}
To illustrate some interesting cases that are covered by our study, we consider:
\begin{itemize}
    \item[i)] For $\beta >0$ and $\gamma$ defined near $0$ by  $$ \gamma(r):=r\,\left(\log\left(1/r\right)\right)^{\beta}.$$
    First, we remarque that under \eqref{commensurate}, for all $\eta>0$ small enough and for all $s,t\in [0,1]$ such that $|t-s|\leq \varepsilon_0$ we have $$ \frac{1}{\sqrt{l}}|t-s|\leq \delta(s,t) \leq \sqrt{l}|t-s|^{1-\eta},$$
which implies imediately that, %when we take $E\subset [0,1]$ such that $\dim_{Euc}(E)>0$ "otherwise if $\dim_{Euc}(E)=0$ there is noting to prove" we get that 
$ \dim_{Euc}(E)\leq \dim_{\delta}(E)\leq \dim_{Euc}(E)/1-\eta$ for all $\eta>0$. By making $\eta \downarrow 0$ implies that $$\dim_{\delta}(E)=\dim_{Euc}(E),$$
where $\dim_{Euc}(\cdot)$ denote the Hausdorff dimension associated to the Euclidean metric on $\mathbb{R}_+$.
\item[ii)]\textit{Hölder scale}: For $\beta \in \mathbb{R}$ and $H\in (0,1)$ and $\gamma$ defined near $0$ by $$ \gamma(r)=r^H\left(\log\left(1/r\right)\right)^{\beta}.$$
By the same argument as in $(i)$, we can verify that (in both cases $\beta>0$ and $\beta<0$) $\dim_{\delta}(E)=\frac{\dim_{Euc}(E)}{H}$.
\item[iii)]\textit{Logarithmic scale}: This is an interesting case, which need to be studied carefully, because it is the most irregular case. For $\beta>1/2$ and $\gamma$ defined near $0$ by $$ \gamma(r)=\frac{1}{\left(\log\left(1/r\right)\right)^{\beta}}.$$
First, let us fix $E\subset [0,1]$ to be a Borel set such that $\dim_{\delta}(E)<\infty$, then using the fact that $r^{\alpha}=o\left(\gamma(r)\right)$ for any $\alpha>0$, we get that $$\dim_{Euc}(E)\leq \alpha\,\dim_{\delta}(E) \text{ for all $\alpha>0$ }.$$
Then by letting $\alpha\downarrow 0$ we obtain that $\dim_{\delta}(E)=0$. Hence the Euclidean scale is not sufficient to describe the geometry of some Borel sets. and we will see later that in order to describe a lot of geometric properties of the logarithmic Brownian motion, we need to restrict the process to the class of subsets $E\subset [0,1]$ with $\dim_{Euc}(E)=0$. 
\end{itemize}
\end{remark}
In order to better understand that the size of sets of finite and positive $\delta$-Hausdorff dimension is totally dependent on the metric $\delta$,
%In order to more understand the $\delta$-Hausdorff dimension and measures, and how large are the Borel sets we will consider, 
we construct, for some fixed $\zeta>0$, a Borel subset $E\subset [0,1]$ such that $\dim_{\delta}(E)=\zeta$ with $0<\mathcal{H}^{\zeta}(E)<\infty$. This will be also  helpful to understand the difference between the different scales defined in the previous remark. 
%We state the following lemma, where we build a $\delta$-generalised Cantor set.
%we assume that the metric $\delta$ is commensurate with the logarithmic kernel. 
\begin{lemma}
Let $\zeta>0$, 
%we suppose without loss of generality that $\delta(t,s)=\gamma\left(|t-s|\right)$ for all $s,t\in [0,1]$ such that $|t-s|\leq \varepsilon_0$.
Then there exists a compact subset $C_{\zeta}$ of $[0,1]$, such that $0<\mathcal{H}^{\zeta}_{\delta}(C_{\zeta})<\infty$. 
\end{lemma}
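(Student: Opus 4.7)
My plan is to construct $C_\zeta$ as a generalized Cantor set $C_\zeta = \bigcap_{n \geq 0} E_n$, where $E_0 = [0,1]$ and each $E_n$ is a disjoint union of $N_n$ closed Euclidean subintervals of common length $\ell_n$, placed evenly inside their parent intervals from stage $n-1$. The core of the argument is to select the scales $\ell_n \downarrow 0$ and a constant branching number $k_n \equiv k \geq 2$ so that simultaneously the geometric fit condition $k\ell_n \leq \ell_{n-1}/2$ holds (giving uniform gaps between siblings) and the scale balance $c \leq N_n \gamma(\ell_n)^\zeta \leq C$ holds for some constants $0<c\leq C$ independent of $n$. Concretely, I will fix an integer $k$ large and define $\ell_n$ by $k\,\gamma(\ell_n)^\zeta = \gamma(\ell_{n-1})^\zeta$, so that $N_n\gamma(\ell_n)^\zeta = \gamma(\ell_0)^\zeta$ stays constant; the required fit $k\,\gamma^{-1}(\gamma(\ell_{n-1})/k^{1/\zeta}) \leq \ell_{n-1}/2$ then holds for $k$ large enough, under the natural implicit restriction $\zeta \leq \dim_\delta([0,1])$ (beyond which no subset of $[0,1]$ would have positive $\mathcal{H}^\zeta_\delta$-mass).

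With $C_\zeta$ built, the upper bound is immediate from \eqref{commensurate}: the $N_n$ stage-$n$ Euclidean intervals cover $C_\zeta$, and each has $\delta$-diameter at most $\sqrt{l}\,\gamma(\ell_n)$, giving $\mathcal{H}^\zeta_\delta(C_\zeta) \leq \liminf_n N_n(2\sqrt{l}\,\gamma(\ell_n))^\zeta \leq (2\sqrt{l})^\zeta\gamma(\ell_0)^\zeta$. For the lower bound I would use the mass distribution principle: define the natural probability measure $\mu$ on $C_\zeta$ that assigns mass $1/N_n$ to (the trace on $C_\zeta$ of) each stage-$n$ interval, and establish a Frostman-type estimate $\mu(B_\delta(x,r)) \leq K r^\zeta$ uniformly in $x \in C_\zeta$ and small $r > 0$, yielding $\mathcal{H}^\zeta_\delta(C_\zeta) \geq 1/K > 0$. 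Given such $r$, pick $n$ with $\gamma(\ell_n) \leq r\sqrt{l} < \gamma(\ell_{n-1})$; by the lower inequality in \eqref{commensurate} the ball $B_\delta(x,r)$ is contained in the Euclidean ball centered at $x$ of radius $\gamma^{-1}(r\sqrt{l}) \in [\ell_n,\ell_{n-1})$, which by the uniform gap condition meets a bounded number of stage-$(n-1)$ intervals and hence at most a constant multiple of $k$ stage-$n$ intervals, so $\mu(B_\delta(x,r)) \leq 3/N_{n-1}$. Combining this with the scale balance $N_{n-1}\gamma(\ell_{n-1})^\zeta = \gamma(\ell_0)^\zeta$ and the bounded ratio $\gamma(\ell_{n-1})/\gamma(\ell_n) = k^{1/\zeta}$ gives $\mu(B_\delta(x,r)) \leq K r^\zeta$ with $K$ depending only on $k, l, \zeta, \gamma(\ell_0)$.

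The main obstacle is the simultaneous satisfaction of the two inductive constraints in the first step: the fit condition $k\,\gamma^{-1}(\gamma(\ell_{n-1})/k^{1/\zeta}) \leq \ell_{n-1}/2$ is a uniform growth requirement on $\gamma^{-1}$ near the origin, which corresponds precisely to $\zeta \leq \dim_\delta([0,1])$ and which is comfortably satisfied when $\gamma$ is concave near $0$ (Hypothesis \ref{H1}). A secondary subtlety is that the Frostman bound needs the ratio $\gamma(\ell_{n-1})/\gamma(\ell_n)$ to remain uniformly bounded; this is the reason for imposing the self-similar branching choice $k_n \equiv k$, pinning this ratio to the constant value $k^{1/\zeta}$. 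Without such control, the multiplicative factor picked up when passing from counting stage-$(n-1)$ intervals to stage-$n$ intervals in the Frostman step would blow up with $n$, so the self-similar (rather than merely admissible) scale balance is genuinely needed.
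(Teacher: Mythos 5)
Your construction is essentially the paper's own proof: a generalized Cantor set adapted to the metric $\delta^*(s,t)=\gamma(|t-s|)$, with the scales $\ell_n$ chosen so that $N_n\,\gamma(\ell_n)^{\zeta}$ stays constant (the paper takes branching $2$ and $\gamma(\ell_n)=2^{-n/\zeta}$, you take branching $k$), the upper bound obtained from the canonical stage-$n$ covering, and the lower bound from the mass distribution principle via a Frostman estimate $\mu(B_{\delta}(x,r))\lesssim r^{\zeta}$. The only substantive difference is that you explicitly flag the geometric fit condition $k\ell_n\leq \ell_{n-1}/2$ and the implicit restriction $\zeta<\dim_{\delta}([0,1])$ needed for the construction to be carried out, points the paper's proof passes over silently; this is a genuine (if minor) improvement in rigor rather than a different method.
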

\begin{proof}
 Let $\delta^{*}$ be the metric defined as $\delta^{*}(t,s)=\gamma(|t-s|)$. We know from $\eqref{commensurate}$ that the metrics $\delta$ and $\delta^*$ are commensurate, then it will be sufficient to construct a compact set $C_{\zeta}$ such that $0<\mathcal{H}_{\delta^*}^{\zeta}\left(C_{\zeta}\right)<\infty$. We will construct a $\delta^*$-generalised Cantor set of $\delta^*$-Hausdorff dimension equal to $\zeta$. Indeed, let $I_0\subset [0,1]$ an interval of length $\varepsilon_0$. Let first $t_1=\gamma^{-1}\left(2^{-1/\zeta}\right)$ and $l_1=t_1$,   and let $I_{1,1}$ and $I_{1,2}$ two subintervals of $I_0$ with length $l_1\,\varepsilon_0$. For $k\geq 2$, we construct $t_k$, $l_k$, and $\{I_{k,j}:j=1,...,2^{k}\}$ inductively, in the following way: $t_k=\gamma^{-1}\left(2^{-k/\zeta}\right)$ and $l_k=t_k/t_{k-1}$, and the intervals $I_{k,1},...,I_{k,2^k}$ are constructed by conserving only two intervals of length $l_k\, |I_{k-1,i}|=t_k\, \varepsilon_0$ from each interval $I_{k-1,i}$ of the previous iteration. We define $C_{\zeta,k}$ to be the union of the intervals $\left(I_{k,j}\right)_j$ of each iteration. The compact set $C_{\zeta}$ is defined to be the limit set of this construction, namely we have 
 \begin{equation}
     C_{\zeta}:=\bigcap_{k=1}^{\infty}C_{\zeta,k}   .
 \end{equation}
 It remains to show that $0<\mathcal{H}_{\delta^*}^{\zeta}(C_{\zeta})<\infty$. For the upper bound we use the fact that, for all $k$ fixed, the family $\left(I_{k,j}\right)_{j\leq 2^k}$ is a covering of $C_{\zeta}$ and each $I_{k,j}$ is contained in a an open ball $B_{\delta^*}\left(s_i,2^{-k/\zeta}\right)$. Then by definition of the $\zeta$-dimensional Hausdorff measure \eqref{parabolic Hausdorff measure 0} we have
 \begin{equation}
     \mathcal{H}_{\delta^*}^{\zeta}(C_{\zeta})\leq \sum_{j=1}^{2^k}\left(2\,2^{-k/\zeta}\right)^{\zeta}=2^{\zeta}.
 \end{equation}
For the lower bound, we define a measure $\nu$ on $C_{\zeta}$ by the mass distribution principle (\cite{Falconer}), For any $k\geq 1$, we define 
\begin{equation}\label{mass distr}
    \nu(I_{k,i})=2^{-k} \quad \text{ for $i=1,\ldots,2^k$}
\end{equation}
and $\nu\left([0,1] \setminus C_{\zeta,k}\right)=0$. Then by Proposition 1.7 in \cite{Falconer}, $\nu$ can be extended to a probability measure on $C_{\zeta}$. For $t\in C_{\zeta}$ and $r>0$ small enough, let $k\geq 1$ such that $2^{-(k+1)/\zeta}<r\leq 2^{-k/\zeta}$, then it is easy to check that, the ball $B_{\delta^*}(t,r)$ intersect at most $4$ interval $I_{k,i}$, which by using \eqref{mass distr} imply that 
\begin{equation}
    %r^{\zeta}\leq 2^{-k}\leq
    0<\nu\left(B_{\delta^*}(t,r)\right)\leq  2^{-k+2}\leq 8\,r^{\zeta}.
\end{equation}
Then by using the mass distribution principle (see \cite{Falconer} pg. 60), we deduce that $\mathcal{H}_{\delta^*}^{\zeta}\left(C_{\zeta}\right)\geq \nu\left(C_{\zeta}\right)/8=1/8$, which finishes the proof.
\end{proof}
\subsection{Hausdorff dimension for the rang set $B(E)$}

Now our aim is to give, under some weaker assumptions on $\gamma$, an upper and lower bounds for the Hausdorff dimension of the image of a Borel set $E$ by the Gaussian process $B$. We notice that when $B$ has stationary increments, an explicit formula for $\dim_{Euc}\left(B(E)\right)$ was given in terms of $d$ and $\dim_{\delta}(E)$\footnote{Another approach was given in \cite{Hawkes} in order to define the term $\dim_{\delta}(E)$, but this approach seems to be valid only when $B$ has stationary increments, instead of our approach which does not require the stationarity of the increments} by Hawkes in Theorem 2 in \cite{Hawkes}, under the strong condition $ind(\gamma)>0$, where 
$ind\left(\gamma\right)$ denote the lower index of the function $\gamma$, which is defined as
\begin{align}\label{def index}
    \begin{aligned}
    ind\left(\gamma\right):&=\sup\{\alpha: \gamma(x)=o\left(x^{\alpha}\right)\}\\
    &=\left(\inf\{\beta: \gamma(x)=o\left(x^{1/\beta}\right)\} \right)^{-1}.
    \end{aligned}
\end{align}
Firstly, we seek condition on $\gamma$ that can weaken the condition $ind\left(\gamma\right)>0$ and which could help us to build an appropriate covering of $B(E)$. It is worth noting that the condition \eqref{condition raisonable} is important for the construction of some covering for $B(E)$, in order to be able to provide an upper bound for its Hausdorff dimension $\dim_{Euc}\left(B(E)\right)$. But to be on the right path of generalizing Theorem 2 in \cite{Hawkes}, we need a condition which should be satisfied by all functions $\gamma$ with  $ind(\gamma)>0$. Even though \eqref{condition raisonable} is already satisfied by all power functions, and some other important examples of interest (see Proposition \ref{prop RVF}, Examples \ref{exple of RVF}, and Remark \ref{ex satisfy reas cond}), we are not able to show that is satisfied by all continuous functions with strictly positive lower index. Nevertheless, we can provide another condition which is weaker than \eqref{condition raisonable}, and we will show that it is satisfied by all functions $\gamma$ of strictly positive lower index that we might work with. It will be also useful to provide an optimal upper bound for $\dim_{Euc}\left(B(E)\right)$. Indeed, we state following condition: 

For all $0<\varepsilon<1$ sufficiently small, there exist two constants $c_{1,\varepsilon}>0$ and $x_{\varepsilon}>0$, such that
\begin{align}\label{nice condition}
    \dint_0^{1/2}\gamma(xy)\frac{dy}{y\sqrt{\log(1/y)}}\leq c_{1,\varepsilon}\, \left(\gamma(x)\right)^{1-\varepsilon} \quad \text{ for all $0<x<x_{\varepsilon}$.}
\end{align}
Therefore, this last condition is immediately weaker than condition \eqref{condition raisonable}. %We will see in the sequel that \eqref{nice condition} will be satisfied by a large class of functions $\gamma$ of zero index, i.e. when $ind(\gamma)=0$. 
Now we provide proof of the comforting fact that all functions $\gamma$ with a positive finite lower index that we might work with are satisfying \eqref{nice condition}.
\begin{lemma}\label{lem check nice cond}
Let $\gamma$ be continuous, strictly increasing, and concave near the origin. If we assume that $ind(\gamma)\in (0,\infty)$, then $\gamma$ satisfies the condition \eqref{nice condition}.
%\footnote{Le résultat montré dans ce lemm est faible que celle du lemme 3.3 dans votre articla avec E. Nualart, mais il y a un petit erreur technique dans la démonstration de ce dernier lemme. En effet, on ne peut pas majorer le $\gamma(xy)$ par $\gamma(x^2)$ quand le $y\in (x,1/2)$.   }
\end{lemma}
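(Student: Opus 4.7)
The plan is to split the integral
\begin{equation*}
I(x):=\int_0^{1/2}\gamma(xy)\,\frac{dy}{y\sqrt{\log(1/y)}}
\end{equation*}
into a ``bulk'' part, controlled crudely by the size $\gamma(x)$, and a ``tail'' part, controlled by the polynomial upper bound that $\operatorname{ind}(\gamma)>0$ forces, and then to absorb the residual logarithmic factor into $\gamma(x)^{-\varepsilon}$.

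Concretely, fix $\alpha\in(0,\operatorname{ind}(\gamma))$; by the definition of the lower index, $\gamma(r)\leq C_\alpha r^\alpha$ for $r$ small. Concavity of $\gamma$ near the origin combined with $\lim_0\gamma=0$ makes $\gamma(r)/r$ non-increasing, so $\gamma(r)\geq c_0 r$ near $0$. These two bounds drive the argument. First I will substitute $y=e^{-v^2}$, using $dy/(y\sqrt{\log(1/y)})=-2\,dv$, to obtain
\begin{equation*}
I(x)=2\int_{\sqrt{\log 2}}^{\infty}\gamma(xe^{-v^2})\,dv,
\end{equation*}
and then split at $V_0:=\sqrt{(2/\alpha)\log(1/x)}$. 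On $[\sqrt{\log 2},V_0]$ I use monotonicity $\gamma(xe^{-v^2})\leq\gamma(x)$, producing a bulk contribution $\lesssim\gamma(x)V_0=C\gamma(x)\sqrt{\log(1/x)}$. On $[V_0,\infty)$ I use $\gamma(xe^{-v^2})\leq C_\alpha x^\alpha e^{-\alpha v^2}$ and the Gaussian tail bound $\int_{V_0}^{\infty}e^{-\alpha v^2}\,dv\leq e^{-\alpha V_0^2}/(2\alpha V_0)=x^2/(2\alpha V_0)$, producing a tail of order $x^{\alpha+2}/V_0$. The lower bound $\gamma(x)\geq c_0 x$ then makes this tail negligible next to $\gamma(x)V_0\geq c_0 xV_0$, since $x^{\alpha+1}/V_0^2\to 0$. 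Combining the two pieces yields $I(x)\leq C\gamma(x)\sqrt{\log(1/x)}$.

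To conclude, I apply $\gamma(x)\leq C_\alpha x^\alpha$ a second time: for any $\varepsilon>0$, $\gamma(x)^{-\varepsilon}\geq C_\alpha^{-\varepsilon}x^{-\alpha\varepsilon}$ is a positive power of $1/x$, which dominates $\sqrt{\log(1/x)}$ as $x\downarrow 0$. Therefore $\sqrt{\log(1/x)}\leq c_{1,\varepsilon}\gamma(x)^{-\varepsilon}$ for $x$ small, and $I(x)\leq c'_{1,\varepsilon}\gamma(x)^{1-\varepsilon}$, which is precisely \eqref{nice condition}; note that this delivers the inequality for every $\varepsilon\in(0,1)$, not merely those close to $1-\operatorname{ind}(\gamma)$.

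The main obstacle I anticipate is verifying the balance at the splitting level: that the tail really is swallowed by the bulk at $V_0=\sqrt{(2/\alpha)\log(1/x)}$. This is precisely where both halves of the hypothesis are essential---$\operatorname{ind}(\gamma)>0$ supplies the Gaussian decay $e^{-\alpha v^2}$ in the tail, and concavity (with $\lim_0\gamma=0$) supplies the lower bound $\gamma(x)\geq c_0 x$, without which the comparison $x^{\alpha+1}/V_0^2\to 0$ would not translate into a useful domination. The naive route of using only $\gamma(xy)\leq C_\alpha(xy)^\alpha$ throughout the integral gives the much weaker $I(x)\leq C_\alpha'x^\alpha$, and forces $\varepsilon>1-\operatorname{ind}(\gamma)$; it is the splitting that upgrades this to all small $\varepsilon$.
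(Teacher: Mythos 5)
Your proof is correct, and it reaches the same intermediate estimate as the paper --- namely $\int_0^{1/2}\gamma(xy)\,\frac{dy}{y\sqrt{\log(1/y)}}\leq C\,\gamma(x)\sqrt{\log(1/x)}$, after which both arguments absorb the logarithm into $\gamma(x)^{-\varepsilon}$ using $\gamma(x)\lesssim x^{\alpha}$. The structural skeleton (split into a piece where $\gamma(xy)\leq\gamma(x)$ contributes the $\sqrt{\log(1/x)}$ factor, and a piece near $y=0$ killed by the polynomial upper bound coming from $ind(\gamma)>0$) is shared; your substitution $y=e^{-v^2}$ and your splitting level $y=x^{2/\alpha}$ versus the paper's $y=x$ are cosmetic variations. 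The genuine difference, and the real merit of your version, lies in how the small-$y$ piece is shown to be negligible: the paper divides by $\gamma(x)$ and needs a pointwise lower bound $\gamma(x)\geq c_{3,\varepsilon}x^{\alpha+\varepsilon}$, which it extracts from the definition of the lower index only along a sequence $x_n\downarrow 0$ and then asserts ``without loss of generality'' for all small $x$ --- a step that is not fully justified as written. You instead invoke the concavity hypothesis (together with $\lim_{0}\gamma=0$) to get the honest lower bound $\gamma(r)\geq c_0\,r$ near the origin, which is all that is needed to dominate the tail $x^{\alpha+2}/V_0$ by the bulk $\gamma(x)V_0$. This makes actual use of the concavity assumption in the lemma's statement (which the paper's proof never explicitly touches), removes the shakiest step of the published argument, and, as you note, yields \eqref{nice condition} for every $\varepsilon\in(0,1)$ rather than only for $\varepsilon<\alpha/3$.
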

\begin{proof}
Let $\alpha:=ind(\gamma)$, and we fix $\varepsilon>0$ small enough, then there exists a constant $c_{2,\varepsilon}$ such that $\gamma(x)\leq c_{2,\varepsilon}\, x^{\alpha-\varepsilon}$, for any $x\in [0,1/2]$. We have also the existence of another constant $c_{3,\varepsilon}$ and a sequence $(x_n)_n$ decreasing to $0$ such that $\gamma(x_n)\geq c_{3,\varepsilon}\, x_n^{\alpha+\varepsilon}$ for all $n$. We may assmue without loss of generality that $\gamma(x)\geq c_{3,\varepsilon}\, x^{\alpha+\varepsilon}$ for all $x\in (0,1/2]$.
%\footnote{J'ai vu ceci dans la preuve du lemme 3.3 dans \cite{Eulalia&Viens2013}. Mais je n'ai pas encore arrivé à une justification complète concernant cette assertion, je pense qu'on peut prouver ceci en utilisant la concavité des deux fonctions $\gamma(\cdot)$ et $x\mapsto x^{\alpha+\varepsilon}$ quand $\alpha<1$. En effet, j'essaie de montrer que deux fonctions sont concaves au voisinage de 0, s'ils sont comparable le long d'une suite $(x_n)_n$ qui tend vers 0, alors ils sont comparable sur tout un voisinage de 0. Quand $\alpha\geq 1$ cette assertion devient facile a cause de la convexité de $x\mapsto x^{\alpha+\varepsilon}$. Que pensez-vous de ces arguments?}
 We now only need to show that for some $0<x_{\varepsilon}<1$ small enough, we have 
\begin{align}\label{check weak cond}
    I:=\frac{1}{\gamma(x)}\dint_0^{1/2}\gamma(xy)\frac{dy}{y\, \sqrt{\log(1/y)}}\leq c_{4,\varepsilon}\, \gamma^{-\varepsilon}\left(x\right),
\end{align}
for all $0<x<x_{\varepsilon}$. For $x<1/2$, we splite the above integral into intervals $(0,x]$ and $(x,1/2]$, and using the fact that $\gamma$ is increasing as well as the bounds obtained above on $\gamma$, we have 
\begin{align*}
    I&=\dint_0^{x}\frac{\gamma(xy)}{\gamma(x)}\frac{dy}{y\sqrt{\log(1/y)}}+\dint_x^{1/2}\frac{\gamma(xy)}{\gamma(x)}\frac{dy}{y\sqrt{\log(1/y)}}\\
    &\leq \frac{c_{2,\varepsilon}}{c_{3,\varepsilon}}x^{-2\varepsilon}\dint_0^{x}y^{\alpha-\varepsilon-1}dy+\frac{\gamma(x/2)}{\gamma(x)}\,\dint_x^{1/2}\frac{dy}{y\sqrt{\log(1/y)}}\\
    &\leq c_{5,\varepsilon}\,\left( x^{\alpha-3\varepsilon}+\sqrt{\log(1/x)}\right).
\end{align*}
By choosing $\varepsilon<\alpha/3$, we get that $I \leq 2c_{5,\varepsilon}\, \sqrt{\log(1/x)}$. Using again the fact that $\gamma$ has a positive lower index, we get that $\sqrt{\log(1/x)}=o\left( \gamma^{-\varepsilon}(x)\right)$, which gives the desired inequality in \eqref{check weak cond}.
\end{proof}
Now we can state the main result of this section
%An interesting question: Show if the pacing dimension of $B(E)$ can be expicited under the same conditions or it require some additional work and more informations on $\gamma$. 
\begin{theorem}\label{Hausd dim image}
Let $B$ the continuous $\mathbb{R}^d$-valued centered Gaussian process defined above such that the canonical metric $\delta$ satisfies \eqref{commensurate} with a function $\gamma$ that satisfies Hypothesis \ref{H1}. For any Borel set $E\subset [0,1]$, we have 
\begin{itemize}
    \item[i)] 
    \begin{align}\label{dim lower bnd}
    \dim_{Euc}(B(E)) \geq \min\left(d,\dim_{\delta}(E)\right) \text{ a.s.}
    \end{align}
    \item[ii)] Under the additional condition \eqref{nice condition} we have 
    \begin{align}\label{dim upper bnd}
    \dim_{Euc}(B(E))= \min\left(d,\dim_{\delta}(E)\right)\text{ a.s.}   
    \end{align}
    where $\dim_{Euc}(\cdot)$ denote the Hausdorff dimension associated to the Euclidean metric.
\end{itemize}
\end{theorem}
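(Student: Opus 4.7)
}
My plan is to prove (i) by the standard Frostman energy method transported to the canonical metric $\delta$, and (ii) by a first-moment covering estimate that combines the entropy bound from Lemma \ref{lem upper bound} with condition \eqref{nice condition}. For the lower bound I would fix any $\alpha<\min(d,\dim_\delta(E))$ and, using the capacity characterization \eqref{altern dim delta}, choose a probability measure $\nu\in\mathcal P(E)$ with $\mathcal E_{\delta,\alpha}(\nu)<\infty$. Pushing forward, let $\mu:=\nu\circ B^{-1}$, a random probability measure on $B(E)$. Because the components $B_1,\dots,B_d$ are i.i.d.\ copies of $B_0$, the increment $B(t)-B(s)$ is an isotropic centered Gaussian vector with covariance $\delta^2(s,t)\,I_d$, so $\|B(t)-B(s)\|$ has the law of $\delta(s,t)\,\chi_d$. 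Since $\alpha<d$, the moment $C_{d,\alpha}:=\mathbb E[\chi_d^{-\alpha}]$ is finite, giving
\begin{equation*}
\mathbb E\,\mathcal E_\alpha(\mu)=\int\!\!\int \mathbb E\,\|B(t)-B(s)\|^{-\alpha}\,\nu(dt)\nu(ds)=C_{d,\alpha}\,\mathcal E_{\delta,\alpha}(\nu)<\infty
\end{equation*}
by Fubini. Hence $\mathcal E_\alpha(\mu)<\infty$ a.s., so $\dim_{Euc}(B(E))\ge\alpha$ a.s.; letting $\alpha$ run through a countable sequence approaching $\min(d,\dim_\delta(E))$ yields \eqref{dim lower bnd}.

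For the upper bound, observe that $\dim_{Euc}(B(E))\le d$ is trivial, so it suffices to prove $\dim_{Euc}(B(E))\le\dim_\delta(E)$ a.s.\ when $\dim_\delta(E)<d$. Fix $\beta>\dim_\delta(E)$; by definition of $\mathcal H^\beta_\delta$, for every $\eta>0$ there is a countable cover $\{B_\delta(t_n,r_n)\}$ of $E\subset[0,1]$ with $r_n\le\eta$ and $\sum_n(2r_n)^\beta<\eta$. Set $D_n:=\sup\{\|B(s)-B(t_n)\|:s\in B_\delta(t_n,r_n)\cap[0,1]\}$, so that $B(E)$ is covered in $\mathbb R^d$ by Euclidean balls of radii $D_n$ centred at $B(t_n)$. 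The key analytic input is a moment estimate of the form
\begin{equation*}
\mathbb E\,D_n^{\beta'}\le C\,(f_\gamma(r_n))^{\beta'},\qquad \beta'>0,
\end{equation*}
which I would establish by applying the Dudley-type entropy argument already carried out in the proof of Lemma \ref{lem upper bound} to each of the $d$ coordinate processes, upgraded from a first-moment to a $\beta'$-moment bound via Borell's Gaussian concentration inequality (using $\sup_{s\in B_\delta(t_n,r_n)}\mathrm{Var}(B_0(s)-B_0(t_n))\le r_n^2$).

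Now condition \eqref{nice condition}, after the substitution $x=\gamma^{-1}(r\sqrt l)$, upgrades to $f_\gamma(r)\le C_\varepsilon\,r^{1-\varepsilon}$ for all sufficiently small $r$ and every small $\varepsilon>0$. Taking expectation of the cover estimate and using $r_n\le\eta\le 1$ then gives
\begin{equation*}
\mathbb E\bigl[\mathcal H^{\beta'}_{Euc}(B(E))\bigr]\le\sum_n\mathbb E(2D_n)^{\beta'}\le C\sum_n r_n^{\beta'(1-\varepsilon)}\le C\sum_n(2r_n)^\beta\le C\eta
\end{equation*}
as soon as $\beta'(1-\varepsilon)\ge\beta$. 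Letting $\eta\downarrow 0$ shows $\mathcal H^{\beta'}_{Euc}(B(E))=0$ a.s.\ for every $\beta'\ge\beta/(1-\varepsilon)$, hence $\dim_{Euc}(B(E))\le\beta/(1-\varepsilon)$ a.s.; sending $\varepsilon\downarrow 0$ and then $\beta\downarrow\dim_\delta(E)$ along countable sequences finishes (ii). The main obstacle I expect is the moment upgrade $\mathbb E\,D_n^{\beta'}\lesssim(f_\gamma(r_n))^{\beta'}$: one has to verify that the Dudley/Borell constants are uniform in the centre $t_n$ and that the arbitrary exponent $\beta'>0$ causes no issue; once this is in place, condition \eqref{nice condition} is precisely tailored to absorb the gap between the ``image scale'' $f_\gamma(r)$ and the ``parameter scale'' $r$ with vanishing loss.
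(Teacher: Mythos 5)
Your argument for the lower bound (i) is essentially the paper's own: the same push-forward measure $\mu=\nu\circ B^{-1}$, the same exact identity $\mathbb{E}\,\|B(t)-B(s)\|^{-\alpha}=c_{\alpha}\,\delta(s,t)^{-\alpha}$ with $c_{\alpha}=\mathbb{E}\,\|X\|^{-\alpha}<\infty$ for $\alpha<d$, and the energy/Frostman conclusion. For the upper bound (ii) you take a genuinely different, and correct, route. The paper covers $E$ by $\gamma$-dyadic intervals $C_i^n$, covers $[0,1]^d$ by Euclidean dyadic cubes $I$ of side $2^{-n}$, bounds $\mathbb{P}\{B(C_i^n)\cap I\neq\emptyset\}\leq c\,2^{-n(1-\varepsilon)d}$ by combining the small-ball estimate of Lemma \ref{lem upper bound} with \eqref{nice condition}, and sums over hit cubes to control the expected Hausdorff content of $B(E)\cap[0,1]^d$ of order $\zeta+\varepsilon d$; you instead bound directly the Euclidean diameter $D_n$ of the image of each covering $\delta$-ball, using Dudley's entropy bound for the mean (the same entropy computation as in the proof of Lemma \ref{lem upper bound}, applied to $s\mapsto B(s)-B(t_n)$, whose canonical metric is again $\delta$, so the constants are uniform in $t_n$ by \eqref{commensurate}) and Borell's inequality with variance proxy $r_n^2\lesssim f_\gamma(r_n)^2$ for the moment upgrade. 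Both arguments consume \eqref{nice condition} in exactly the same way, namely $f_\gamma(r)\leq C_\varepsilon\,r^{1-\varepsilon}$, and both incur the same $\varepsilon$-loss that is then sent to $0$. Your route is more self-contained for the dimension statement alone: it avoids the conditional decomposition $B_0(s)=c(s,t)B_0(t)+R(s)$, the independence trick, and the localization to compacts $K$ (which the paper needs only to make $\operatorname{card}(\mathcal{I}_n)$ finite); the paper's route has the advantage of reusing the hitting-probability Lemma \ref{lem upper bound}, which is needed anyway in Section 4. Two small points to tidy in your write-up: phrase the final estimate in terms of the Hausdorff content $\mathcal{H}^{\beta'}_{\infty}$ rather than the measure, since your cover of $B(E)$ has random, uncontrolled radii $D_n$ (content zero still implies measure zero), and note that the moment upgrade is Jensen for $\beta'\leq 1$ and Minkowski plus Borell (using $r_n\leq f_\gamma(r_n)/\sqrt{\log 2}$) for $\beta'>1$.
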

Before proving this theorem let us introduce some notations. Let $\mathfrak{C}=\bigcup_{n=0}^{\infty}\mathfrak{C}_n$ be the class of all $\gamma$-dyadic intervals such that every $C\in \mathfrak{C}_n$ has the forme $$ [(j-1)\gamma^{-1}(2^{-n}), j\gamma^{-1}(2^{-n})],$$ for $k,n \in \mathbb{N}$. By using \eqref{commensurate} and substituting  $\delta$-balls by $\gamma$-dyadic intervals in the definition of Hausdorff measure, we obtain another family of outer measures $\left(\widetilde{H}_{\delta}^{\beta}(\cdot)\right)_{\beta}$. Fortunately by making use of \eqref{commensurate} we can check that for all fixed $\beta$, the measures $\mathcal{H}_{\delta}^\beta(\cdot)$ and $\widetilde{H}_{\delta}^{\beta}(\cdot)$ still equivalent.  The proof follows from the same lines as in Taylor and Watson \cite{Tay&Wats} p. 326. A necessary condition is that, $\gamma(2s)\leq c\,\gamma(s)$ for all $0<s<\varepsilon_0$ with some constant $c>0$, which is an immediate consequence of the concavity of $\gamma$ near zero.

\begin{proof}[proof of Theorem \ref{Hausd dim image}] We begin by proving $(i)$. First, by the countable stability of Hausdorff dimension we can suppose without loss of generality that $\operatorname{diam}(E)\leq \varepsilon$. Let $\zeta<d\wedge \dim_{\delta}(E)$, then \eqref{altern dim delta} implies that there is a probability measure $\nu$ supported on $E$ such that 
\begin{align}
  \int_E\int_E\frac{\nu(ds)\nu(dt)}{\delta\left(s,t\right)^{\zeta}}<\infty.
\end{align}
%for all $x\in E$ and $0<r<1$. 
Let $\mu:= \nu \circ B^{-1}$ be the image of $\nu$ by the process $B$, then
\begin{align}
    \begin{aligned}
    \mathbb{E}\left(\int_{\mathbb{R}^d}\int_{\mathbb{R}^d}\frac{\mu(dx)\mu(dy)}{\|x-y\|^{\zeta}}\right)&=\int_E\int_E\mathbb{E}\left(\frac{1}{\|B(t)-B(s)\|^{\zeta}}\right)\nu(ds)\nu(dt)\\
    &=c_{\zeta}\, \int_E\int_E\frac{\nu(ds)\nu(dt)}{\delta(t,s)^{\zeta}}<\infty,
    \end{aligned}
\end{align}
where $c_{\zeta}=\mathbb{E}\left(1/\|X\|^{\zeta}\right)$ with $X \sim \mathcal{N}(0,I_d)$, which is finite because $\zeta<d$. Then Frostman's theorem, on $\mathbb{R}^d$ endowed with the Euclidean metric, implies that $\mathcal{C}^{\zeta}(B(E))>0$ a.s. %($\mathcal{C}^{\zeta}(\cdot)$ denote the capacity of order $\zeta$ associated to the Euclidean metric on $\mathbb{R}^d$)
Hence $\dim_{Euc}\left(B(E) \right)\geq \zeta$, and by making $\zeta \uparrow d\wedge \dim_{\delta}(E)$ we get the desired inequality. Let us now prove the upper bound part $(ii)$. Indeed, we suppose that $d> \dim_{\delta}(E)$ otherwise there is nothing to prove. Let $\zeta>\dim_{\delta}(E)$, by definition of Hausdorff dimension we have $\widetilde{\mathcal{H}}_{\delta}^{\zeta}(E)=0$. Let $\eta>0$, so that there is a family of $\gamma$-dyadic interval $(C_k)_{k\geq 1}$ such that for every $k\geq 1$ there is $n_k,j_k\in \mathbb{N}$ and $C_k$ has the form $[(j_k-1)\,\gamma^{-1}\left(2^{-n_k}\right),j_k\,\gamma^{-1}\left(2^{-n_k}\right)]$ and we have
\begin{align}
    E\subset\bigcup_{k=1}^{\infty}C_k \quad \text{ and }\quad \sum_{k=1}^{\infty}\left|C_k\right|_{\delta
    }^{\zeta}< \eta,\label{cover Hausdorff 1}
\end{align}
where $\left|\cdot\right|_{\delta}$ denote the diameter associated to the metric $\delta$. By using \eqref{commensurate} it is easy to verify that $ c_1\, 2^{-n_k} \leq \left|C_k\right|_{\delta}\leq c_2\, 2^{-n_k}$, where $c_1$ and $c_2$ depend on $l$ only. For all  fixed $n\geq 1$, let $M_n$ be the number of indices $k$ for which $n_k=n$, 
%("which is obviously finite since by \eqref{cover Hausdorff 1} we have $M_n\leq \eta\, 2^{n\,\zeta}$ "),
  implies that 
\begin{equation*}
    \sum_{n=1}^{\infty}M_n 2^{-n\,\zeta}<\frac{\eta}{c_1}.
\end{equation*}

Let $K\subset \mathbb{R}^d$ an arbitrary compact set, we will construct an adequate covering of $B\left(E\right)\cap K$. To simplify we suppose that $K=[0,1]^d$. For every $n\geq 1$ let $\mathcal{I}_n$ be the collection of Euclidean dyadic subcubes of $[0,1]^d$ of side length $2^{-n}$, and for all $i=1,...,M_n$ let $\mathcal{G}_{n,i}$ be the collection of cubes $I\in \mathcal{I}_n$ such that $B\left(C_i^{n}\right)\cap I\neq \emptyset$. 
Then we have 

\begin{equation}
    \begin{aligned}\label{inclusion intersection 0}
    \begin{array}{c}
        B\left(E\right)\cap [0,1]^d\subseteq \bigcup_{n=1}^{\infty}\,\,\bigcup_{i=1}^{M_n}\bigcup_{\,\,I\in \mathcal{G}_{n,i}}B\left(C_i^{n}\right)\cap I.
    \end{array}
    \end{aligned}
\end{equation}
For all $n\geq 1$, $i\in \{1,...,M_n\}$, and $ I\in \mathcal{I}_n$. Let $\varepsilon>0$ small enough, by using  \eqref{estim small ball 1} and condition \eqref{nice condition}, we get that  
$$ \mathbb{P}\left\{I\in \mathcal{G}_{n,i}\right\}\leq c_3 2^{-n\,(1-\varepsilon)d},$$
where $c_3$ may depends on $\varepsilon$, but not on $n$. We denote by $\mathcal{H}^{\zeta}_{\infty}(\cdot)$ the $\zeta$-Hausdorff content. It is known that the Hausdorff dimension is defined also through Hausdorff contents in the same way as Hausdorff measures, one can see Proposition 4.9 in \cite{Peres&Morters} for the proof of this fact. Then we obtain
\begin{align}
\begin{aligned}
\mathbb{E}\left(\mathcal{H}_{\infty}^{\zeta+\varepsilon\,d}\left(B(E)\cap [0,1]^d \right)\right)&\leq c_4\, \sum_{n=1}^{\infty}\sum_{i=1}^{M_n}\sum_{I\in \mathcal{I}_n}2^{-n\left(\zeta+\varepsilon\,d\right)}\mathbb{P}\{I\in \mathcal{G}_{n,i}\}\\
&\leq c_5 \sum_{n=1}^{\infty}M_n\operatorname{card}(\mathcal{I}_n)\times 2^{-n(d+\zeta)}\\
        &=c_5\sum_{n=1}^{\infty}M_n 2^{-n\, \zeta}<c_6\,\eta,\label{estim content}
\end{aligned}
\end{align}
where the constants $c_4$, $c_5$, and $c_6$ depend on $\varepsilon$ only. Since $\eta>0$ is arbitrary we get that $\mathcal{H}_{\infty}^{\zeta+\varepsilon\,d}\left(B(E)\cap K \right)=0$ a.s. and then $\dim_{Euc}(B(E)\cap [0,1]^d)\leq \zeta+\varepsilon\,d$ a.s. Hence by making $\zeta\downarrow \dim_{\delta}(E)$ and $\varepsilon\downarrow 0$, we obtain that $\dim_{Euc}(B(E)\cap K)\leq \dim_{\delta}(E)+\varepsilon\,d$, and the desired inequality follows by making $\varepsilon\downarrow 0$. So, the using the countable stability property of Hausdorff dimension ensures that $ \dim\left( B(E)\right)\leq \dim_{\delta}(E)$, which finishes the proof of $(ii)$. 
%(you need to define the class $\mathcal{J}$ of $\gamma$-dyadic intervals)  
\end{proof}

\begin{remark}
Notice that condition \eqref{nice condition} fails to holds in the logarithmic scale. But we still get a $\sqrt{\log}$ correction, precisely we get $$\int_0^{1/2}\gamma(xy)\frac{dy}{y\sqrt{\log(1/y)}} \leq \gamma(x)\, \sqrt{\log(1/x)}.$$
Hence, by using Lemma \ref{lem upper bound} we obtain that 
\begin{equation}\label{small ball log scale}
		\mathbb{P}\left\{\inf _{ s\in B_{\delta}(t,r)\cap I}\|B(s)-z\| \leqslant r\right\} \leqslant c_2 r^{(1-\frac{1}{2\beta})d},
		\end{equation}
and by following the same lines from \eqref{cover Hausdorff 1} to \eqref{estim content} we get that (the lower bound does not change) $$\dim_{\delta}(E)\wedge d\leq \dim_{Euc}(B(E))\leq \left(\dim_{\delta}(E)+\frac{d}{2\beta}\right)\wedge d. $$
\end{remark}
%\begin{remark}
%If we compare Theorem \ref{Hausd dim image} with Theorem 2 in \cite{Hawkes}, we can say that the former is a generalisation of the latter to Gaussian processes that are satisfying only \eqref{commensurate} and \textbf{without} the assumption $ ind \left(\gamma\right)>0$, which was assumed there. Here $ind\left(\gamma\right)$ denote the lower index of the function $\gamma$
%\begin{align}
  %  \begin{aligned}
  %  ind\left(\gamma\right)&:=\sup\{\alpha: \gamma(x)=o\left(x^{\alpha}\right)\}\\
 %   &=\left(\inf\{\beta: \gamma(x)=o\left(x^{1/\beta}\right)\} \right)^{-1}.
 %   \end{aligned}
%\end{align}
%We note that as was seen in our proof, the lower bound part in \eqref{dim lower bnd} does not require the  condition $ ind\left(\gamma\right)>0$. But the upper bound \eqref{dim upper bnd} holds only under the condition \eqref{condition raisonable}, which fails to holds in the Logarithmic scale as is shown in the previous remark.
%\end{remark}
It is quite remarkable that the irregularity of the process $B$ increase when the lower index $ind(\gamma)$ decrease. When ${ind}(\gamma):=\alpha\in (0,1)$, all trajectories of $B$ are $\beta$-Hölder continuous for all $\beta<\alpha$, and obviously by Lemma \ref{lem check nice cond}, we get that $\gamma$ satisfies \eqref{nice condition}. Hence, the optimal upper bound in \eqref{dim upper bnd} holds immediately. In the other case, when ${ind}(\gamma)=0$, the trajectories of the Gaussian process $B$ are never being H\"older continuous, and no one can be sure if condition \eqref{nice condition} is satisfied in general or not. Since it was shown in the previous remark that the logarithmic scale (i.e. when $\gamma(x):=\log^{-\beta}(1/x)$ for $\beta>1/2$) does not satisfy \eqref{nice condition}, and thinking of this scale as the irregular one, there are several other regularity scales which interpolate between H\"older-continuity and the aforementioned logarithmic scale, this compels us to ask the following question:

\begin{itemize}
\item Is there a continuous and strictly increasing function $\gamma$ with zero index (${ind}(\gamma)=0$) and satisfying the condition \eqref{nice condition}?
\end{itemize}

A positive answer for the above question will is given by the following example, where we will give a class of functions $\left(\gamma_{\alpha}\right)_{\alpha\in (0,1)}$ with zero indexes, such that the weaker condition \eqref{nice condition} is satisfied. Therefore, by Theorem their associated Gaussian processes would satisfy the identity $\dim_{Euc}\left(B(E)\right)=\dim_{\delta}(E)\wedge d$\, a.s.   
\begin{example}\label{exmpl weak cond}
To give an example of $\gamma$ which satisfies the condition \eqref{nice condition}, we consider the family of functions $(\gamma_{\alpha})_{\alpha\in(0,1)}$ defined by $\gamma_{\alpha}(x):=\exp\left(-\log^{\alpha}(1/x)\right)$. It is easy to see that, for any fixed $\alpha\in (0,1)$, $\gamma_{\alpha}(x)$ is less irregular than the logarithmic scale (i.e. $\gamma_{\alpha}(x)=o\left(\log^{-\beta}(1/x)\right)$ for all $\beta>0$), but still more irregular than the Hölder scale (i.e. $x^H=o\left(\gamma_{\alpha}(x)\right)$ for all $H> 0$). It remains to show that $\gamma_{\alpha}$ satisfies $\eqref{nice condition}$. Indeed, we have 
\begin{equation}\label{estim integral 2}
    \begin{aligned}
        \int_{0}^{1 / 2} \gamma_{\alpha}(x y) \frac{d y}{y \sqrt{\log (1 / y)}}&=\int_{0}^{1 / 2} \exp\left(-\left(\log(1/x)+\log(1/y)\right)^{\alpha}\right) \frac{d y}{y \sqrt{\log (1 / y)}}\\
        &=\int_{\log 2}^{\infty}\exp\left(-\left(\log(1/x)+z\right)^{\alpha}\right)\frac{dz}{\sqrt{z}},
        %&=\int_{\log 2}^{\infty}\exp\left[-\log^{\alpha}(1/x)\times\left(1+z/\log(1/x)\right)^{\alpha}\right]\frac{dz}{\sqrt{z}}
    \end{aligned}
\end{equation}
where we used the change of variable $z=\log(1/y)$. Using the fact that, for all $\mathfrak{c}\in (0,1)$ there is some $N:=N(\mathfrak{c})>0$ large enough, so that  
\begin{align}
    (1+u)^{\alpha}\geq 1+\mathfrak{c}\,u^{\alpha} \quad \text{for all $u\geq N$, }\label{real estimat}
\end{align}
we may fix $\mathfrak{c}\in (0,1)$, and its corresponding $N(\mathfrak{c})$. Then we break the integral in \eqref{estim integral 2} into the intervals $\,[\log(2), N \log(1/x))\,$ and $\,[N\log(1/x), +\infty)\,$ and denote them by $\mathcal{I}_1$ and $\mathcal{I}_2$, respectively. We write\\ $\left(\log(1/x)+z\right)^{\alpha}=\log^{\alpha}(1/x)\times\left(1+z/\log(1/x)\right)^{\alpha}$, and we note that the second term is bounded from below by $1+\mathfrak{c}\left(\frac{z}{\log(1/x)}\right)^{\alpha}$ when $z\geq N\log(1/x)$ (Thanks to \eqref{real estimat}), and by $1$ when $z< N\log(1/x)$. We first have
\begin{align}\label{I1}
    \mathcal{I}_1\leq\exp\left(-\log^{\alpha}(1/x)\right)\, \int_{0}^{N\log(1/x)}\frac{dz}{\sqrt{z}}=2\,\gamma_{\alpha}(x)\, \sqrt{N\,\log(1/x)}.
\end{align}
On the other hand, we have 
\begin{align}\label{I2}
    \mathcal{I}_2\leq \exp\left(-\log^{\alpha}(1/x)\right)\,\int_{0}^{\infty}\operatorname{e}^{-\mathfrak{c}z^{\alpha}}\frac{dz}{\sqrt{z}}=c_{1,\alpha}\, \gamma_{\alpha}(x).
\end{align}
By combining \eqref{I1} and \eqref{I2}, and the fact that $\,\sqrt{\log(1/x)}=o\left(\gamma_{\alpha}^{-\varepsilon}(x)\right)$ for all $\varepsilon>0$, we obtain the estimation of \eqref{nice condition}.
\end{example}

\section{Hitting probabilities}
\subsection{Preliminaries}
Our aims now is to develop a criterion for hitting probabilities of a Gaussian process $B$ with canonical metric $\delta$ which satisfies the commensurability condition \eqref{commensurate}. We will establish lower and upper bounds for hitting probabilities in terms of a capacity term and the Hausdorff measure term, respectively. Both of the capacity and Hausdorff measures terms would be  constructed on $\mathbb{R}_+\times \mathbb{R}^d$, and they would be associated to an appropriate metric $\rho_{\delta}$ on $\mathbb{R}_+\times \mathbb{R}^d$, which will be defined bellow. First of all, we define the metric $\rho_{\delta}$ on $\mathbb{R}_+\times \mathbb{R}^d$ by
\begin{align}\label{parabolic metric}
    \rho_{\delta}\left((s,x),(t,y)\right)=\max\{\delta(t,s),\|x-y\|\}, \quad \text{ for all } (s,x),(t,y)\in \mathbb{R}_+\times \mathbb{R}^d.
\end{align}
For an arbitrary $\beta>0$ and $G\subseteq \mathbb{R}_+\times\mathbb{R}^d$, the $\beta$-dimensional Hausdorff measure of $G$ in the metric $\rho_{\delta}$ is defined by 
\begin{equation}
\mathcal{H}_{\rho_{\delta}}^{\beta}(E)=\lim_{\eta \rightarrow 0}\inf \left\{\sum_{n=1}^{\infty}\left(2 r_{n}\right)^{\beta}: E \subseteq \bigcup_{n=1}^{\infty} B_{\rho_{\delta}}\left(r_{n}\right), r_{n} \leqslant \eta \right\}.\label{parabolic Hausdorff measure 1}
\end{equation}
The corresponding Hausdorff dimension of $G$ is defined by
\begin{equation}
    \dim_{\rho_{\delta}}(G)=\inf\{\beta: \mathcal{H}_{\rho_{\delta}}^{\beta}(G)=0\}.
\end{equation}
The $\alpha$-Bessel-Riesz type capacity of $G$ on the metric space $\left(\mathbb{R}_+\times \mathbb{R}^d,\rho_{\delta}\right)$ is defined by 
\begin{equation}
    \mathcal{C}_{\rho_{\delta},\alpha}(G)=
    \left[\inf _{\mu \in \mathcal{P}(E)} \int_{\mathbb{R}_+\times\mathbb{R}^d} \int_{\mathbb{R}_+\times \mathbb{R}^d} \varphi_{\alpha}(\rho_{\delta}(u,v)) \mu(d u) \mu(d v)\right]^{-1},\label{rho-delta capacity}
	\end{equation}
	where the kernel $\varphi_{\alpha}:(0,\infty)\rightarrow (0,\infty)$ is defined in \eqref{radial kernel}. 
	
	It should be kept in mind that the strong condition \eqref{condition raisonable} will be more beneficial than \eqref{nice condition} to study the problem of Hitting probabilities. Specifically, this will be useful to derive an optimal upper bound for $\mathbb{P}\{B(E)\cap F \neq \emptyset\}$ in terms of $\mathcal{H}_{\rho_{\delta}}^d\left(E\times F\right)$. We also note that even though we can not provide a general result which may ensure that any function $\gamma$ with $ind(\gamma)>0$ should satisfy \eqref{condition raisonable}, but at least it is known that all regularly varying functions with index $\alpha\in (0,1)$ are satisfying \eqref{condition raisonable}, see Proposition \ref{prop RVF}. In the other hand, similarly to the case of the weaker condition \eqref{nice condition}, it is worth asking about the existence of an increasing function $\gamma$ with $ind(\gamma)=0$ that satisfies \eqref{condition raisonable}?

    Taking into account the fact that all strictly increasing and continuous functions $\gamma$ with $ind(\gamma)=0$ are highly irregular than any power function near zero. We may conjecture that there is a high possibility of the non-existence of a function $\gamma$ with $ind(\gamma)=0$ which satisfies \eqref{condition raisonable}. Although we have not been able to prove this conjecture ultimately, we would like to observe its high probability to hold. So, we provide a mild sufficient condition for the non-existence of \eqref{condition raisonable}, which is already satisfied -under the Hypothesis \ref{H1}- by a large class of functions with zero lower index as we will see. In order to provide a useful formula for the index of $\gamma$, we will assume that $\gamma$ is differentiable except perhaps at 0, and strictly increasing near 0. Then we have.
    \begin{proposition}\label{prop suffic cond}
    Let $\gamma$ be a differentiable, strictly increasing near 0. We denote by $\Psi_{\gamma}(r):=\frac{r\, \gamma^{\prime}(r)}{\gamma(r)}$. If we assume  $\lim_{r\downarrow 0}\Psi_{\gamma}(r)\log^{1/2}\left(1/r\right)=0$, then 
    \begin{align}\label{cond not hold}
    \lim_{x\downarrow 0}\left(\frac{1}{\gamma(x)}\dint_0^{1/2}\gamma(x\, y)\frac{dy}{y\sqrt{\log(1/y)}}\right)=\infty.
    \end{align}
    \end{proposition}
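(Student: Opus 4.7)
The plan is to transform the integral via the substitution $y=e^{-u}$, to express the ratio $\gamma(xe^{-u})/\gamma(x)$ in logarithmic form through $\Psi_\gamma$, and then to turn the hypothesis into an explicit pointwise lower bound on the integrand.

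First, setting $L:=\log(1/x)$ and substituting $u=\log(1/y)$ rewrites the target quantity as
\begin{equation*}
\frac{1}{\gamma(x)}\int_0^{1/2}\gamma(xy)\frac{dy}{y\sqrt{\log(1/y)}}=\int_{\log 2}^{\infty}\frac{\gamma(xe^{-u})}{\gamma(x)}\frac{du}{\sqrt{u}}.
\end{equation*}
Since $\gamma$ is differentiable and strictly positive near $0$, I would use the identity $\log\gamma(x)-\log\gamma(xe^{-u})=\int_0^u\Psi_\gamma(xe^{-v})\,dv$. Fix $\varepsilon>0$; the hypothesis $\lim_{r\downarrow 0}\Psi_\gamma(r)\log^{1/2}(1/r)=0$ furnishes $r_0(\varepsilon)>0$ such that $\Psi_\gamma(r)\leq \varepsilon/\sqrt{\log(1/r)}$ whenever $r\leq r_0$. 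For $x<r_0$ one has $xe^{-v}<r_0$ for every $v\geq 0$, so that
\begin{equation*}
\int_0^u\Psi_\gamma(xe^{-v})\,dv\leq \varepsilon\int_0^u\frac{dv}{\sqrt{v+L}}=2\varepsilon\bigl(\sqrt{L+u}-\sqrt{L}\bigr),
\end{equation*}
which gives the pointwise lower bound $\gamma(xe^{-u})/\gamma(x)\geq \exp\bigl(-2\varepsilon(\sqrt{L+u}-\sqrt{L})\bigr)$.

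To extract the divergence, I would restrict attention to the window $u\in[\log 2,\sqrt{L}]$. On this window $\sqrt{L+u}-\sqrt{L}=u/(\sqrt{L+u}+\sqrt{L})\leq \sqrt{L}/(2\sqrt{L})=1/2$, so the exponential factor is bounded below by $e^{-\varepsilon}$ throughout. Therefore, for $x$ small enough that $\sqrt{L}>\log 2$,
\begin{equation*}
\frac{1}{\gamma(x)}\int_0^{1/2}\gamma(xy)\frac{dy}{y\sqrt{\log(1/y)}}\geq e^{-\varepsilon}\int_{\log 2}^{\sqrt{L}}\frac{du}{\sqrt{u}}=2e^{-\varepsilon}\bigl(L^{1/4}-\sqrt{\log 2}\bigr),
\end{equation*}
which tends to $+\infty$ as $x\downarrow 0$, yielding \eqref{cond not hold}.

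The main technical point is to notice that the hypothesis on $\Psi_\gamma$ is tailored so that $\int_0^u\Psi_\gamma(xe^{-v})\,dv$ grows only like the square-root increment $\sqrt{L+u}-\sqrt{L}$; once this is in hand, the blow-up of order $L^{1/4}$ is forced by the $1/\sqrt{u}$ singularity integrated over a window of length of order $\sqrt{L}$. No finer structural properties of $\gamma$ beyond differentiability and monotonicity enter, so I expect no serious obstacle beyond identifying the correct window of integration.
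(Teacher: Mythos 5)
Your argument is correct, and it takes a genuinely different route from the paper. The paper's proof first bounds the integral from below via the substitution $z=xy$ by $\Phi(x/2):=\int_0^{x/2}\gamma(z)\,\frac{dz}{z\sqrt{\log(1/z)}}$, invokes a doubling bound $\gamma(x)\leq c_1\gamma(x/2)$, and then applies L'H\^opital's rule to $\Phi(x)/\gamma(x)$, using $\Phi'(x)/\gamma'(x)=\bigl(\Psi_\gamma(x)\log^{1/2}(1/x)\bigr)^{-1}\to\infty$. You instead substitute $u=\log(1/y)$, represent the ratio exactly as $\gamma(xe^{-u})/\gamma(x)=\exp\bigl(-\int_0^u\Psi_\gamma(xe^{-v})\,dv\bigr)$, convert the hypothesis into the pointwise bound $\int_0^u\Psi_\gamma(xe^{-v})\,dv\leq 2\varepsilon(\sqrt{L+u}-\sqrt{L})$ with $L=\log(1/x)$, and integrate $u^{-1/2}$ over the window $[\log 2,\sqrt{L}]$ where the exponential stays bounded below. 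All the steps check out (the sign $\Psi_\gamma\geq 0$ needed to pass from the limit hypothesis to the one-sided bound is supplied by monotonicity of $\gamma$, and restricting a nonnegative integrand to a subinterval is legitimate). Your approach buys two things the paper's does not: it avoids both the doubling inequality and L'H\^opital's rule (so no discussion of the existence of the derivative quotient limit is needed), and it yields an explicit quantitative divergence rate of order $\log^{1/4}(1/x)$ rather than mere divergence. The paper's proof is shorter and reuses machinery ($\Phi$, the characterization of the index) already present in that section, but your window argument is self-contained and arguably more robust.
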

    %and that $\lim_{0^+}\gamma^{\prime}=\infty$, which without loss of generality ensures that $\gamma$ is concave and strictly increasing near 0. 
    Before proving this last proposition, we give the following characterisation of $ind(\gamma)$, when $\gamma$ is a differentiable function.
    \begin{lemma}\label{charact index}
    Let $\gamma$ be a differentiable, strictly increasing, and $\gamma^{\prime}(0^{+})=\infty$. Then we have
    \begin{align}
        \liminf_{r\downarrow 0}\Psi_{\gamma}(r)\leq ind\left(\gamma\right)\leq \limsup_{r\downarrow 0}\Psi_{\gamma}(r).
    \end{align}
    \end{lemma}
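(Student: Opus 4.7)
The plan is to pass to the logarithmic scale, where $\Psi_{\gamma}$ becomes the derivative of a natural potential. Set $u := \log(1/r)$ and define $f(u) := \log\bigl(1/\gamma(e^{-u})\bigr)$ for $u$ large enough. Since $\gamma$ is strictly increasing and $\lim_{0}\gamma = 0$, the function $f$ is positive for $u$ large, differentiable on its domain, and a chain-rule computation gives
\[
f'(u) \;=\; \frac{e^{-u}\gamma^{\prime}(e^{-u})}{\gamma(e^{-u})} \;=\; \Psi_{\gamma}(e^{-u}).
\]
Consequently $\liminf_{r \downarrow 0}\Psi_{\gamma}(r) = \liminf_{u \to \infty} f'(u)$ and analogously for the limsup. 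The lemma therefore reduces to a statement comparing $ind(\gamma)$ to the asymptotic behavior of $f'$.

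\textbf{Step 1: reformulation of the index.} I would first show that $ind(\gamma) = \liminf_{u \to \infty} f(u)/u$. Taking logarithms and substituting $x = e^{-u}$, the condition $\gamma(x) = o(x^{\alpha})$ as $x \downarrow 0$ is equivalent to $f(u) - \alpha u \to +\infty$ as $u \to \infty$. Writing $L := \liminf_{u \to \infty} f(u)/u$: if $\alpha < L$, I pick an intermediate $\alpha' \in (\alpha, L)$, so that $f(u) \geq \alpha' u$ eventually and hence $f(u) - \alpha u \geq (\alpha' - \alpha) u \to \infty$; therefore $\alpha$ lies in the defining set of $ind(\gamma)$, giving $L \leq ind(\gamma)$. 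Conversely, if $\alpha < ind(\gamma)$ then $f(u) - \alpha u \to \infty$ forces $f(u)/u \geq \alpha$ eventually, hence $L \geq \alpha$; taking sup over such $\alpha$ yields $L \geq ind(\gamma)$.

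\textbf{Step 2: elementary calculus fact.} With Step 1 in hand, the lemma reduces to the classical inequality
\[
\liminf_{u \to \infty} f'(u) \;\leq\; \liminf_{u \to \infty} \frac{f(u)}{u} \;\leq\; \limsup_{u \to \infty} f'(u),
\]
valid for any differentiable $f$ on a half-line. The right-hand inequality follows from the fundamental theorem of calculus: if $M := \limsup_{u \to \infty} f'(u) < \infty$, then for every $\varepsilon > 0$ the bound $f'(u) \leq M + \varepsilon$ holds for all $u$ large, and integrating gives $\limsup_{u \to \infty} f(u)/u \leq M + \varepsilon$. The left-hand inequality follows by the dual integration argument. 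Combining this with Step 1 produces exactly the two inequalities of the lemma.

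\textbf{Main obstacle.} No individual step is conceptually deep, but the identification of $ind(\gamma)$ with $\liminf_{u \to \infty} f(u)/u$ in Step 1 is the point that requires care: the hypothesis $\gamma(x) = o(x^{\alpha})$ is strictly stronger than $\gamma(x) \leq C x^{\alpha}$, so one cannot conclude $f(u) - \alpha u \to \infty$ from the bare inequality $f(u)/u > \alpha$ eventually — a strict intermediate exponent $\alpha'$ is genuinely needed to extract the divergence. The hypothesis $\gamma^{\prime}(0^{+}) = \infty$ does not appear to enter the above argument and seems to be present only to align the lemma with the standing framework of Proposition~\ref{prop suffic cond}.
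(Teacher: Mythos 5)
Your proof is correct and is essentially the paper's argument transported to logarithmic coordinates: integrating $f'(u)=\Psi_{\gamma}(e^{-u})$ over an interval is exactly the paper's integration of $\gamma^{\prime}/\gamma \gtrless \alpha/r$ over $[r_1,r_2]$, which there yields the monotonicity of $r\mapsto\gamma(r)/r^{\alpha}$ (i.e.\ of $f(u)-\alpha u$), and both proofs rely on the same strict intermediate exponent to pass from a one-sided power bound to the $o(x^{\alpha'})$ statement. Your observation that the hypothesis $\gamma^{\prime}(0^{+})=\infty$ is never used matches the paper, whose proof likewise does not invoke it.
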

    \begin{proof}
    We start by the lower inequality, we suppose that $\liminf_{r\downarrow 0}\Psi_{\gamma}(r)>0$ otherwise there is nothing to prove. Let us fix $0<\alpha^{\prime}<\alpha<\liminf_{r\downarrow 0}\Psi_{\gamma}(r)$, then there is $r_0>0$ such that $\alpha/r\leq \gamma^{\prime}(r)/\gamma(r)$ for any $r\in (0,r_0]$. Next, for $r_1<r_2\in (0,r_0]$ we integrate over $[r_1,r_2]$ both of elements of the last inequality, we obtain that $\log\left(r_2/r_1\right)^{\alpha}\leq \log\left(\gamma(r_2)/\gamma(r_1)\right)$, this implies immediately that $r\mapsto \gamma(r)/r^{\alpha}$ is increasing on $(0,r_0]$, and then $\lim_{r\downarrow 0}\gamma(r)/r^{\alpha}$ exists and finite. Since $\alpha^{\prime}<\alpha$, we get $\lim_{r\downarrow 0}\gamma(r)/r^{\alpha^{\prime}}=0$ and then $\alpha^{\prime}\leq ind(\gamma)$. Considering the fact that $\alpha^{\prime}$ and $\alpha$ are arbitrary, the desired inequality holds by letting $\alpha^{\prime}\uparrow \alpha$ and $\alpha\uparrow \liminf_{r\downarrow 0}\Psi_{\gamma}(r)$. For the upper inequality, we assume that $\limsup_{r\downarrow 0}\Psi_{\gamma}(r)<\infty$, and we fix $\alpha^{\prime}>\alpha>\limsup_{r\downarrow 0}\Psi_{\gamma}(r)$ in order to show by a similar argument as above that $r\mapsto \gamma(r)/r^{\alpha}$ is decreasing near 0, and $\lim_{r\downarrow 0}\gamma(r)/r^{\alpha}>0$. Hence, $\lim_{r\downarrow 0}\gamma(r)/r^{\alpha}=\infty$ and then by letting $\alpha^{\prime}\downarrow \alpha$ and $\alpha\downarrow \limsup_{r\downarrow 0}\Psi_{\gamma}(r)$ the desired inequality is obtained.
    \end{proof}
    \begin{proof}[Proof of Proposition \ref{prop suffic cond}]First, we note that the proposition's assumption implies that $\lim_{r\rightarrow 0}\Psi_{\gamma}(r)=0$, and thanks to Lemma \ref{charact index}, which ensures $ind\left(\gamma\right)=0$. Now, using the change of variable $z=xy$, it's easy to check that  
    \begin{align}\label{lwr bnd integr 1}
     \dint_0^{1/2}\gamma(x\, y)\frac{dy}{y\sqrt{\log(1/y)}}\geq \dint_0^{x/2}\gamma( z)\frac{dz}{z\sqrt{\log(1/z)}}.
 \end{align}
We denote by $\Phi(x):=\int_0^{x}\gamma( z)\frac{dz}{z\sqrt{\log(1/z)}}$. Dividing both of the terms in \eqref{lwr bnd integr 1} by $\gamma(x)$, and we use the fact that, for some constant $c_1>0$ we have $\gamma(x)\leq c_1\, \gamma(x/2)$ for all $x>0$ sufficiently small, we obtain 
\begin{align}
    \begin{aligned}
    \frac{1}{\gamma(x)}\dint_0^{1/2}\gamma(x\, y)\frac{dy}{y\sqrt{\log(1/y)}}
    \geq c_1^{-1}\frac{\Phi(x/2)}{\gamma(x/2)}.
    \end{aligned}
\end{align}
Then it will be sufficient to show that $\lim_{x\downarrow 0}\Phi(x)/\gamma(x)=\infty$. Indeeed, since $\Phi(0^+)=\gamma(0^+)=0$, and the functions $\Phi$ and $\gamma$ are differentiable near 0, such that $\gamma^{\prime}(x)>0$ pour tout $0<x<x_0$, for some small $x_0>0$, and by using the assumption, we derive that $\lim_{x\downarrow 0}\Phi^{\prime}(x)/\gamma^{\prime}(x)=\lim_{x\downarrow 0}1/\Psi_{\gamma}(x)\,\log^{1/2}(1/x)=\infty
$. Therefore, an application of the hospital rule yields that
$$ 
\lim_{x\downarrow 0}\frac{\Phi(x)}{\gamma(x)}=\lim_{x\downarrow 0}\frac{\Phi^{\prime}(x)}{\gamma^{\prime}(x)}=+\infty,
$$
which finishes the proof of \eqref{cond not hold}.
    \end{proof}
To realize the usefulness of this sufficient condition, it is enough to check it on some examples
\begin{example}
\textbf{(i)} $\gamma(x)=\log^{-\beta}(1/x)\times m(x)$, with $\beta\geq 1/2$, and the function $\operatorname{m}(\cdot)$ admits slower variations than all of  $\,\,\log^{-\alpha}(1/x)$ for any $\alpha>0$, i.e. $\operatorname{m}(r)=o\left(\log^{\alpha}(1/r)\right)$, and such that $\Psi_{m}\left(r\right)=o\left(\log^{-1/2}(1/r)\right)$. After simple calculation we get  $\Psi_{\gamma}(x)=\beta\,\log^{-1}(1/x)+\Psi_m(x).$ Hence,  $\lim_{x\downarrow 0}\Psi_{\gamma}(x)\log^{1/2}(1/x)=0$.  We can consider $m(x):=\log^{\alpha}\left(\log(1/x)\right)$ with $\alpha\in \mathbb{R}$ when $\beta>1/2$. But when $\beta=1/2$, we should choose $\alpha<0$ in order to coserve the continuity of the Gaussian process $B$. In particular when $\alpha=0$, it can be deduced that $\gamma(x)=\log^{-\beta}(1/x)$ satisfies \eqref{cond not hold} for any $\beta>1/2$.\\
\textbf{(ii)} $\gamma(x)=\exp\left(-\log^{\alpha}(1/x)\right)$ with $0<\alpha<1$. Therefore  $$\lim_{x\downarrow 0}\Psi_{\gamma}(x)\log^{1/2}(1/x)=0 \quad \text{ if and only if }\quad 0<\alpha<1/2.$$ So in that last case $\gamma$ satisfies \eqref{cond not hold}. 
    Otherwise, the case $1/2\leq \alpha<1$ still without information! 

\end{example}
\begin{remark}
Notice that the second example above combined with Example \ref{exmpl weak cond} ensure that the function $\gamma_{\alpha}(x)=\exp\left(-\log^{\alpha}(1/x)\right)$ satisfies the weak condition \eqref{nice condition} but does not satisfy the strong one \eqref{condition raisonable}, at least when $\alpha\in (0,1/2)$.    
\end{remark}
\subsection{Criteria for hitting probabilities}
Now, we are ready to present the main results of this section.
\begin{theorem}\label{Hitting proba}
Assume that Hypothesis \ref{H1} holds. Then for all $0<a<b<\infty$ and $M>0$, and for $E\subset [a,b]$ and $F\subset [-M,M]^{d}$ are two Borel sets, we have   
\begin{itemize}
    \item[i)] If the diameter of $E$ is small enough, there exists a constant $C_1>0$ depending only on $a,b, M$ and the law of $B$, such that
    \begin{equation}
    C_1\, \mathcal{C}_{\rho_{\delta},d}(E\times F)\leq \mathbb{P}\left\{B(E)\cap F\neq \emptyset\right\}\label{lower bnd hitting}.
    \end{equation}
    Otherwise, if $\mathcal{C}_{\rho_{\delta},d}(E\times F)>0\,$ then $\mathbb{P}\left\{B(E)\cap F\neq \emptyset\right\}>0 $. 
    \item[ii)] If in addition to the Hypothesis \ref{H1}, the function $\gamma$ satisfies the condition \eqref{condition raisonable}, there exists a constant $C_2>0$ also depending only on $a,b,M$, and the law of $B$, such that 
    \begin{equation}\label{upper bnd hitting}
        \mathbb{P}\left\{B(E)\cap F\neq \emptyset\right\}  \leq C_2\, \mathcal{H}_{\rho_{\delta}}^{d}\left(E\times F\right).
    \end{equation}
\end{itemize}
\end{theorem}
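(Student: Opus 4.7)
The two parts are proven by essentially independent arguments. For the upper bound \eqref{upper bnd hitting} in part (ii), the plan is a direct covering argument. Fix $\eta > 0$ and choose a covering $\{B_{\rho_\delta}((t_j, x_j), r_j)\}_{j \geq 1}$ of $E \times F$ with each $r_j < r_0 \wedge \gamma(x_0)$ from Corollary \ref{cor estim small ball} and with $\sum_j (2r_j)^d \leq \mathcal{H}^d_{\rho_\delta}(E \times F) + \eta$. If $B(t) \in F$ for some $t \in E$, then $(t, B(t)) \in E \times F$ lies in some ball $B_{\rho_\delta}((t_j, x_j), r_j)$, which by definition of $\rho_\delta$ in \eqref{parabolic metric} means $\delta(t, t_j) \leq r_j$ and $\|B(t) - x_j\| \leq r_j$. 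Therefore
$$\{B(E) \cap F \neq \emptyset\} \subseteq \bigcup_{j \geq 1} \Bigl\{ \inf_{s \in B_\delta(t_j, r_j) \cap [a,b]} \|B(s) - x_j\| \leq r_j \Bigr\},$$
and Corollary \ref{cor estim small ball} (whose hypothesis is precisely \eqref{condition raisonable}) bounds the probability of the $j$-th event by $c_4 r_j^d$. A union bound and letting $\eta \downarrow 0$ yield \eqref{upper bnd hitting}.

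For the lower bound \eqref{lower bnd hitting} in part (i), the plan is a classical second-moment argument. Assuming $\mathcal{C}_{\rho_\delta, d}(E \times F) > 0$, fix $\mu \in \mathcal{P}(E \times F)$ with $\mathcal{E}_{\rho_\delta, d}(\mu) < \infty$ and for $\varepsilon > 0$ set
$$J_\varepsilon := \int_{E \times F} p_\varepsilon(B(t) - x) \, \mu(dt, dx),$$
with $p_\varepsilon$ the Gaussian density on $\mathbb{R}^d$ of variance $\varepsilon$. Since $E \subset [a,b]$ with $0 < a$ and $F \subset [-M,M]^d$, the variance $\gamma^2(t)$ is bounded above and below on $E$, so $\mathbb{E}[J_\varepsilon]$ is trapped between two positive constants uniformly in $\varepsilon$. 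Once the second-moment bound
$$\mathbb{E}[J_\varepsilon^2] \leq C \int\!\!\int \frac{\mu(ds, dy)\, \mu(dt, dx)}{\rho_\delta((s,x),(t,y))^d} = C\, \mathcal{E}_{\rho_\delta, d}(\mu)$$
is established uniformly in $\varepsilon$, the Paley--Zygmund inequality produces $\mathbb{P}\{J_\varepsilon \geq \mathbb{E}[J_\varepsilon]/2\} \geq c / \mathcal{E}_{\rho_\delta, d}(\mu)$ uniformly in $\varepsilon$. Extracting a vague subsequential limit of $J_\varepsilon\, \mu(dt, dx)$ yields, with the required probability, a non-trivial Radon measure supported on $\{(t, B(t)) : t \in E\} \cap (E \times F)$, and in particular $B(E) \cap F \neq \emptyset$. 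Optimising over $\mu$ delivers \eqref{lower bnd hitting}; the smallness of the diameter of $E$ is used to stay within the regime where Lemma \ref{lem two pts LND} applies.

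The principal technical obstacle is the pointwise control $\mathbb{E}[p_\varepsilon(B(t)-x)p_\varepsilon(B(s)-y)] \leq C/\rho_\delta((s,x),(t,y))^d$, uniform in $\varepsilon$. This quantity is the joint density at $(y, x)$ of $(B(s) + \xi_1, B(t) + \xi_2)$ with independent $\xi_i \sim \mathcal{N}(0, \varepsilon I_d)$. Under Hypothesis \ref{H1}, the two-point local nondeterminism of Lemma \ref{lem two pts LND} forces the $2\times 2$ determinant of each coordinate's covariance to exceed $c_1 \gamma^2(s) \gamma^2(|t-s|) \gtrsim \delta^2(s,t)$, uniformly in $\varepsilon \geq 0$. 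Via the conditional decomposition \eqref{process minus projection}, this gives a conditional density of $B(t)\,|\,B(s) = y$ at $x$ controlled by $C\,\delta(s,t)^{-d} \exp(-c \|x - c(s,t)y\|^2/\delta(s,t)^2)$. To reconcile this with the spatial scale $\|x-y\|$, use \eqref{estim correlation} to get $\|c(s,t)y - y\| \lesssim M\,\delta(s,t)$ uniformly in $y \in F$, then split on whether $\|x-y\| \geq K\,\delta(s,t)$: in the far regime the Gaussian tail absorbs the $\delta^{-d}$ prefactor into $C\|x-y\|^{-d}$, while in the near regime the $\delta^{-d}$ prefactor is already of the right order. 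Combining the two regimes recovers $C/\max(\delta(s,t), \|x-y\|)^d$. Reconciling these scales in the presence of the non-translation-invariant drift $c(s,t)y$ is the delicate step, and is precisely what forces the hypotheses $a > 0$ and $F \subset [-M,M]^d$.
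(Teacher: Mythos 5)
Your proposal matches the paper's proof: the upper bound is the same covering of $E\times F$ by $\rho_{\delta}$-balls combined with Corollary \ref{cor estim small ball}, and the lower bound is the same second-moment/Paley--Zygmund argument with the mollified measures $m_n$ (your $J_{\varepsilon}\,\mu$, with $\varepsilon=1/n$), whose key kernel estimate is exactly the content of the paper's Lemma \ref{lem lower bnd}. Your direct conditional-density derivation of that estimate is the standard argument the paper itself defers to Biermé--Lacaux--Xiao, so the two routes coincide in all essentials.
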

\begin{remark}
We know that the integrability condition $\int_0^{1}\gamma^{-d}(r)dr<\infty$, which was stated in Remark 2.7. in \cite{Eulalia&Viens2013} implied that the process $B$ -restricted to the hull interval $[a,b]$- hits points with positive probability, i.e. $\mathbb{P}\{B([a,b])\ni x\}>0$ for all $x\in \mathbb{R}^d$. But this integrability condition would be largely sufficient in some irregular cases; for example in the logarithmic scale, %i.e. when $\gamma(r)=\left(\log (1/r)\right)^{-\beta}$ with $\beta>1/2$,
the problem of hitting points might be studied for the process $B$ restricted to some fractal set $E\subset [a,b]$, which could be tiny in the sense that $\dim_{Euc}(E)=0$ but it still has the the capacity of ensuring the non-polarity of points, i.e. $\mathbb{P}\{B(E)\ni x\}>0$. Indeed, the lower bound in \eqref{lower bnd hitting} gives a sharp sufficient condition on $E$ for $B_{|E}$ to hit points. Namely, if $\mathcal{C}_{\delta,d}(E)>0$, then for every $x\in \mathbb{R}^d$ we have $\,\mathbb{P}\{B(E)\ni x\}>0
 $. 
 %where we used the well known fact that  $\mathcal{C}_{\rho_{\delta},d}(E\times \{x\})=\mathcal{C}_{\delta,d}(E)$.
 Then, based on the alternative expression of the Hausdorff dimension $\dim_{\delta}(\cdot)$ in \eqref{altern dim delta}, we deduce that the condition $\dim_{\delta}(E)>d$ is largely sufficient to ensure the non-polarity of points. This gives rise to a generalized integrability condition. Precisely, by using \eqref{dim Frost} and \eqref{decomp energy}, it is easy to check that, for any $0<\varepsilon<\dim_{\delta}(E)-d$ there exists a probability measure $\nu$ supported on $E$ such that $$\int_0^1\gamma^{-d-\varepsilon}(r)\nu(dr)<\infty. $$  

%$ii)$ We note that, as it was mentioned in Remark 2.8. in \cite{Eulalia&Viens2013}, there is a strong relation between the problem of hitting points and the existence and regularity of local time. We will see in the next section that the condition $\dim_{\delta}(E)>d$ combined with some additional conditions (the strong local nondeterminism) would imply that the process $B$ restricted on $E$ may have a jointly continuous local time.
\end{remark}
\begin{remark}
It is worth noting that under the weaker upper condition \eqref{nice condition} we lose the upper bound estimation of the probability $\mathbb{P}\left\{B(E)\cap F\neq \emptyset \right\}$ in terms of $\mathcal{H}_{\rho_{\delta}}^{d}\left(E\times F\right)$. But we still have a weaker bound. Indeed, for all $\varepsilon>0$ small enough, there exists a positive and finite constant $c_{\varepsilon}$ such that we have 
\begin{align}
    \mathbb{P}\left\{B(E)\cap F\neq \emptyset \right\}\leq c_{\varepsilon}\, \mathcal{H}_{\rho_{\delta}}^{d-\varepsilon}(E\times F).
\end{align}
%$\rho_{\delta}$-Hausdorff dimension of $E\times F$ for this last probability to be null. Precisely we have $\mathbb{P}\left\{B(E)\cap F\neq \emptyset \right\}=0$ if $\dim_{\rho_{\delta}}(E\times F)<d$.
Since for any compact sets $E$ and $F$, the condition $\mathcal{C}_{\rho_{\delta},d}(E\times F)>0$ ensures that $ \mathbb{P}\left\{B(E)\cap F\neq \emptyset\right\}>0$ (without assuming \eqref{nice condition}), we note that the case $\dim_{\rho_{\delta}}(E\times F)=d$ still a critical case even under the weaker condition \eqref{nice condition}, and this is the only critical case that exists. i.e.
$$
\mathbb{P}\left\{B(E)\cap F\neq \varnothing \right\}\left\{\begin{array}{ll}
>0 & \text { if } \dim_{\rho_{\delta}}(E\times F)>d \\
=0 & \text { if } \dim_{\rho_{\delta}}(E\times F)<d
\end{array}\right.,
$$
%and the case $\dim_{\rho_{\delta}}(E\times F)=d$ remains unknown. 
\end{remark}

\begin{remark}
Notice that when the condition \eqref{nice condition} is not satisfied, there are many cases where the lack of information on the positivity of $\mathbb{P}\left\{ B(E)\cap F\neq \varnothing\right\}$ holds, not only for one point $\dim_{\rho_{\delta}}(E\times F)=d$, but for many values of $dim_{\rho_{\delta}}(E\times F)$. For example, in the logarithmic scale, %i.e. $\gamma(r):=log^{-\beta}(1/r)$ with $\beta>1/2$, 
the upper bound holds with the order $d(1-1/2\beta)$ for the $\rho_{\delta}$-Hausdorff measure term, instead of the $\rho_{\delta}$-capacity in the lower bound, which stills holds with the order $d$. Precisely, by using \eqref{small ball log scale}, and by the same covering arguments that will be used in \eqref{cover product} and \eqref{event covering}, we get that

\begin{equation}
    C_1 \mathcal{C}_{\rho_{\delta},d}\left(E\times F\right)\leq \mathbb{P}\left\{B(E)\cap F\neq \varnothing \right\}\leq C_2\, \mathcal{H}_{\rho_{\delta}}^{d(1-1/2\beta)}\left(E\times F\right),
\end{equation}
which tell us that 
$$
\mathbb{P}\left\{B(E)\cap F\neq \varnothing \right\}\left\{\begin{array}{ll}
>0 & \text { if } \dim_{\rho_{\delta}}(E\times F)>d \\
=0 & \text { if } \dim_{\rho_{\delta}}(E\times F)<d(1-1/2\beta)
\end{array}\right.,
$$
 the critical case $ d(1-1/2\beta)\leq \dim_{\rho_{\delta}}(E\times F)\leq d$ still without information.\\
\end{remark}
The following lemma will be used to prove the lower bound part of the hitting probabilities \eqref{lower bnd hitting}. Its proof follows from the same lines as in Lemma 3.2. in \cite{Bierme&Xiao} by using Lemma \ref{lem two pts LND}.   
\begin{lemma}\label{lem lower bnd}
Assume that Hypothesis \ref{H1} holds. Then for all $x,y\in \mathbb{R}^d$ $s,t\in [a,b]$ such that $|t-s|\leq \varepsilon$, we have 

\begin{equation}
\begin{aligned}
\dint_{\mathbb{R}^{2 d}}& e^{-i(\langle\xi, x\rangle+\langle\eta, y\rangle)} \exp \left(-\frac{1}{2}(\xi, \eta)\left(n^{-1
} I_{2 d}+\operatorname{Cov}(B(s), B(t))\right)(\xi, \eta)^{T}\right) d \xi d \eta \\
&\leq c\, \varphi_d\left(\rho_{\delta}\left((s,x),(t,y)\right)\right),
\end{aligned}
\end{equation}
where $I_{2d}$ denote the $2d\times 2d$ identity matrix, $Cov\left(B(s),B(t)\right)$ denote the $2d\times 2d$ covariance matrix of $(B(s),B(t))$, and $\varphi_{d}(\cdot)$ is the kernel defined in \eqref{radial kernel}. 
\end{lemma}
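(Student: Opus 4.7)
My plan is to compute the Fourier integral explicitly and then apply the two–point local nondeterminism of Lemma~\ref{lem two pts LND}. The integrand is the characteristic function of the smoothed Gaussian vector $(B(s),B(t))+n^{-1/2}N$ with $N\sim\mathcal{N}(0,I_{2d})$ independent of $B$, so Fourier inversion in $\mathbb{R}^{2d}$ gives
\begin{equation*}
\mathrm{LHS}\;=\;\frac{(2\pi)^{d}}{\sqrt{\det\Sigma_{n}}}\exp\!\Bigl(-\tfrac12(x,y)\,\Sigma_{n}^{-1}(x,y)^{T}\Bigr),\qquad \Sigma_{n}:=n^{-1}I_{2d}+\operatorname{Cov}(B(s),B(t)).
\end{equation*}
Because the coordinates of $B$ are i.i.d.\ copies of $B_{0}$, each $d\times d$ block of $\Sigma_{n}$ is a scalar multiple of $I_{d}$, with scalars drawn from $G_{n}:=G+n^{-1}I_{2}$ where $G$ is the $2\times 2$ covariance of $(B_{0}(s),B_{0}(t))$. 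In particular $\det\Sigma_{n}=(\det G_{n})^{d}$, so every matrix estimate reduces to a two-dimensional calculation.

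The next step is to lower-bound $\det G_{n}$ and the exponent separately. From the expansion $\det G_{n}=\det G+n^{-1}\operatorname{tr}(G)+n^{-2}$, the identity $\det G=\gamma^{2}(s)\operatorname{Var}(B_{0}(t)\mid B_{0}(s))$, and Lemma~\ref{lem two pts LND}, together with $\operatorname{tr}(G)\geq 2\gamma^{2}(a)>0$, I get $\det G_{n}\geq c(\gamma^{2}(|t-s|)+n^{-1})$ uniformly in $n\geq 1$. For the quadratic form, I substitute $v:=y-x$ in the explicit scalar expression
\begin{equation*}
(x,y)\Sigma_{n}^{-1}(x,y)^{T}=\frac{1}{\det G_{n}}\Bigl[(\gamma^{2}(t)+n^{-1})\|x\|^{2}-2r\langle x,y\rangle+(\gamma^{2}(s)+n^{-1})\|y\|^{2}\Bigr]
\end{equation*}
and complete the square in $x$; the telescoping identity $(\gamma^{2}(s)+n^{-1})(\delta^{2}(s,t)+2n^{-1})-(\gamma^{2}(s)+n^{-1}-r)^{2}=\det G_{n}$ collapses the residual $\|v\|^{2}$-coefficient to $\det G_{n}/(\delta^{2}(s,t)+2n^{-1})$, yielding the clean lower bound
\begin{equation*}
(x,y)\Sigma_{n}^{-1}(x,y)^{T}\;\geq\;\frac{\|y-x\|^{2}}{\delta^{2}(s,t)+2n^{-1}}.
\end{equation*}

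Setting $\phi:=\gamma(|t-s|)+n^{-1/2}$ and invoking $\delta(s,t)\leq\sqrt{l}\,\gamma(|t-s|)\leq\sqrt{l}\,\phi$, the two estimates combine into $\mathrm{LHS}\leq C\phi^{-d}\exp\!\bigl(-\|y-x\|^{2}/(c'\phi^{2})\bigr)$ with constants depending only on $a,b,l,d$. I then conclude by splitting cases according to which term realizes $\rho_{\delta}((s,x),(t,y))=\max(\delta(s,t),\|x-y\|)$: if $\rho_{\delta}=\delta(s,t)$, I discard the exponential and use $\phi\geq\delta(s,t)/\sqrt{l}$; if $\rho_{\delta}=\|x-y\|$, I apply the elementary bound $u^{d}e^{-u^{2}/c'}\leq C$ with $u=\|x-y\|/\phi$. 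Either way $\mathrm{LHS}\leq C\rho_{\delta}^{-d}=C\varphi_{d}(\rho_{\delta})$. The only genuine subtlety, rather than a real obstacle, is keeping every constant uniform in $n\geq 1$, which is precisely why I carry the $n^{-1}$ contributions explicitly through both the determinant and the quadratic-form identity instead of passing to the $n=\infty$ version of two-point local nondeterminism.
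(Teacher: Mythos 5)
Your proof is correct and follows essentially the same route as the paper, which simply defers to Lemma 3.2 of Biermé--Lacaux--Xiao combined with Lemma \ref{lem two pts LND}: Gaussian Fourier inversion, a determinant lower bound $\det G_n \gtrsim \gamma^2(|t-s|)+n^{-1}$ via the identity $\det G=\gamma^2(s)\operatorname{Var}(B_0(t)\mid B_0(s))$ and two-point local nondeterminism, the quadratic-form bound $\|y-x\|^2/(\delta^2(s,t)+2n^{-1})$, and a case split on which term attains the maximum in $\rho_\delta$. All constants are indeed uniform in $n\geq 1$ as you claim, so nothing further is needed.
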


\begin{proof}[proof of Theroem \ref{Hitting proba}] We begin by proving the lower bound in \eqref{lower bnd hitting}. First, let us suppose that the diameter of $E$ is less than $\varepsilon$. Assume that $\mathcal{C}_{\rho_{\delta},d}(E\times F)>0$ otherwise there is nothing to prove. Which implies the existence of a probability measure $\mu\in \mathcal{P}(E\times F)$ such that 
\begin{equation}\label{bound energy}
\mathcal{E}_{\rho_{\delta},d}(\mu):=\int_{\mathbb{R}_+\times\mathbb{R}^d} \int_{\mathbb{R}_+\times \mathbb{R}^d} \varphi_{d}(\rho_{\delta}(u,v)) \mu(d u) \mu(d v)\leq \frac{2}{\mathcal{C}_{\rho_{\delta},d}(E\times F)}.
\end{equation}
Consider the sequence of random measures $(m_n)_{n\geq 1}$ on $E\times F$ defined as 
\begin{align*}
    \begin{aligned}
        m_n(dt dx)&=(2 \pi n)^{d / 2} \exp \left(-\frac{n\|B(t)-x\|^{2}}{2}\right) \mu(d t d x)\\
        &=\int_{\mathbb{R}^{d}} \exp \left(-\frac{\|\xi\|^{2}}{2 n}+i\langle\xi, B(t)-x\rangle\right) d \xi\,  \mu(d x d t).
    \end{aligned}
\end{align*}
Denote the total mass of $m_n$ by $\|m_n\|=m_n(E\times F)$. We want to verify the following claim
\begin{equation}\label{claim 1}
    \mathbb{E}\left(\left\|m_{n}\right\|\right) \geq c_{1}, \quad \text { and } \quad \mathbb{E}\left(\left\|m_{n}\right\|^{2}\right) \leq c_{2} \mathcal{E}_{\rho_{\delta},d}(\mu),
\end{equation}
where the constants $c_1$ and $c_2$ are independent of $n$ and $\mu$.

First, we have 

\begin{equation}
\begin{aligned}
        \mathbb{E}\left(\|m_n\|\right) &=\int_{E\times F} \int_{\mathbb{R}^{d}} \exp \left(-\frac{|\xi|^{2}}{2}\left(\frac{1}{n}+\gamma^{2}(t)\right)-i\langle\xi, x\rangle\right) d \xi \mu(d t d x) \\
& \geq \int_{E\times F} \frac{(2 \pi)^{d / 2}}{\left(1+\gamma^{2}(t)\right)^{d / 2}} \exp \left(-\frac{|x|^{2}}{2 \gamma^{2}(t)}\right) \mu( d t d x) \\
& \geq \frac{(2 \pi)^{d / 2}}{\left(1+\gamma^{2}(b)^{d / 2}\right.} \exp \left(-\frac{d M^{2}}{2 \gamma^{2}(a)}\right)\int_{E\times F} \mu(dt dx)=: c_{1},
\end{aligned}
\end{equation}
This proves the first inequality in \eqref{claim 1}. We have also 
\begin{align}
    \begin{aligned}
\mathbb{E}\left(\left\|m_{n}\right\|^{2}\right)=\int_{(E\times F)^2}  \int_{\mathbb{R}^{2 d}} &e^{-i(\langle\xi, x\rangle+\langle\eta, y\rangle)} \, \\
&\times \exp \left(-\frac{1}{2}(\xi, \eta)\left(n^{-1} I_{2 d}+\operatorname{Cov}(B(s), B(t))\right)(\xi, \eta)^{T}\right) d \xi \, d \eta\,  \mu(dt d x) \mu(d s d y). 
\end{aligned}
\end{align}
We use Lemma \ref{lem lower bnd} and the fact that the diameter of $E$ is less than $\varepsilon$, we get that $\mathbb{E}\left(\|m_n\|^2\right)\leq  \mathcal{E}_{\rho_{\delta},d}(\mu)$, which proves the second inequality in \eqref{claim 1}. %\begin{align}
 %   I_1=\int_{(E\times F)^2}\operatorname{1}_{\{|t-s|\leq \varepsilon\}}\frac{\mu(dt\,dx)\mu(ds\,dy)}{\rho_{\delta}\left((t,x),(s,y)\right)^d} \leq \mathcal{E}_{\rho_{\delta},d}(\mu),
%\end{align}
%and the same technique used in 2.4 and 2.5 in the proof of Theorem 2.5 in \cite{Eulalia&Viens2013} implies that
%\begin{align}
 %   I_{2}\leq \int_{(E\times F)^2} \operatorname{1}_{\{|t-s|>\varepsilon\}} \frac{(2 \pi)^{d}}{\left(\operatorname{det}\left(\Phi_{n}(s, t)\right)\right)^{d/2}} \mu(d t \,d x) \mu(d s\, d y),
%\end{align}
%where $\Phi_n(s,t)$ denote the $2\times 2$ matrix $$ \Phi_n(s,t)=n^{-1} \operatorname{I}_2+ \operatorname{Cov}(B_0(s), B_0(t)).$$

Now, using the moment estimates in \eqref{claim 1} and the Paley–Zygmund inequality (c.f. Kahane \cite{Kahane}, p.8), one can check that $\{m_n, n\geq 1\}$ has a subsequence that converges weakly to a finite random measure $m_{\infty}$ supported on the set $\{(s,x)\in E \times F : B(s)=x\}$, which is positive on an event of positive probability and also satisfying the moment estimates of \eqref{claim 1}. Therefore, using again the Paley-Zygmund inequality, we conclude that

$$
\mathbb{P}\left\{B(E) \cap F \neq \varnothing\right\} \geq \mathbb{P}\left\{\|m_{\infty}\|>0\right\} \geq \frac{\mathbb{E}(\|m_{\infty}\|)^{2}}{\mathbb{E}\left(\|m_{\infty}\|^{2}\right)} \geq \frac{c_{1}^{2}}{c_{2} \mathcal{E}_{\rho_{\delta},d}(\mu)}.
$$
Hence, \eqref{bound energy} finishes the proof of \eqref{lower bnd hitting}. For the general case. Let us cover $E$ by a countable family of compact sets $(E_i)_{i\geq 1}$ of diameter less than $\varepsilon$. We assume again that $\mathcal{C}_{\rho_{\delta},d}(E\times F)>0$, and let $\mu$ be a probability measure supported on $E\times F$ such that $\mathcal{E}_{\rho_{\delta},d}(\mu)$. This implies that, for all $i\geq 1$ \begin{align}\label{energy cover}
    \int_{E_i\times F} \int_{E_i\times F} \varphi_{d}(\rho_{\delta}(u,v)) \mu(d u) \mu(d v)<\infty.
\end{align}
Since the family $(E_i\times F)_{i\geq 1}$ cover $E\times F$, there exists $i\geq 1$ such that $\mu(E_{i}\times F)>0$. Then the measure $\mu_i(dtdx)=\frac{\mu(dtdx)}{\mu(E_i\times F)}$ is a probability measure  supported on $E_i\times F$ and \eqref{energy cover} implies that $\mathcal{E}_{\rho_{\delta},d}(\mu_i)<\infty$, which ensures that $\mathcal{C}_{\rho_{\delta},d}(E_{i}\times F)>0$, and by using \eqref{lower bnd hitting} we get 
\begin{equation}
   \mathbb{P}\left\{B(E) \cap F \neq \varnothing\right\} \geq \mathbb{P}\left\{B(E_{i}) \cap F \neq \varnothing\right\}\geq C_{i}\,  \mathcal{C}_{\rho_{\delta},d}(E_{i}\times F). 
\end{equation}
which finishes the proof of $(i)$. 

For the upper bound in \eqref{upper bnd hitting}, we use a simple covering argument. We choose an arbitrary constant $\zeta>\mathcal{H}_{\rho_{\delta}}^d(E\times F)$. Then there is a covering of $E\times F$ by balls $\{B_{\rho_{\delta}}((t_i,x_i),r_i), i\geq 1\}$ in $\left(\mathbb{R}_+\times\mathbb{R}^d,\rho_{\delta}\right)$ with small radii $r_i$, such that 
\begin{equation}\label{cover product}
    E\times F\subseteq \bigcup_{i=1}^{\infty}B_{\rho_{\delta}}((t_i,x_i),r_i)\quad  \text{with }\quad  \sum_{i=1}^{\infty}(2r_i)^{d}\leq \zeta.
\end{equation}
It follows that
		\begin{align}\label{event covering}
		\left\{ B(E)\cap F\neq\emptyset\right\}  & \subseteq \bigcup_{i=1}^{\infty}\left\{\,\exists\,(t,x)\in B_{\delta}(t_i,r_i)\times B(x_i,r_i)\,\text{ s.t. } B(t)=x\right\}  \nonumber \\
		& \subseteq\bigcup_{i=1}^{\infty}\left\{ \inf _{ t\in B_{\delta}(t_i,r_i)}\|B(t)-x_i\| \leqslant r_i\right\}. 
		\end{align}
		Since the condition \eqref{condition raisonable} is satisfied, Corollary \ref{cor estim small ball} combined with \eqref{event covering} imply that $ \mathbb{P}\left\{ B(E)\cap F\neq\emptyset\right\}\leqslant c_1\, \zeta$. Let $\zeta\downarrow \mathcal{H}_{\rho_{\delta}}^d(E\times F)$, the upper bound in \eqref{upper bnd hitting} follows. 
\end{proof}

Now we want to provide some optimal lower and upper bounds for the quantity $\mathbb{P}\left\{B(E)\cap F\neq \emptyset \right\}$ in terms of a capacity term of $F$ and a Hausdorff measure term of $F$ with an appropriate order, 
%(both of those terms are associated to the euclidean metric on $\mathbb{R}^d$)
respectively. Let us assume that the Borel set $E$ take some particular form; for example, as a first restriction; we assume that $E$ is $\zeta$-regular set with respect to the metric $\delta$, for some fixed $\zeta>0$, in the sens that there exists a Borel probability measure $\nu$ supported on $E$ such that for some constants $c_1$ and $c_2$, we have  
 \begin{align}\label{zeta-regular set}
     c_1\, r^{\zeta}\leq \nu\left(B_{\delta}(t,r)\right)\leq c_2\, r^{\zeta} \quad \text{ for all } t\in E.
 \end{align}
 Then we have the following result
 \begin{proposition}\label{bnds in trm of F}
 Assume again that Hypothesis \ref{H1} holds. Let $0<a<b<\infty$ and $M>0$, and let $E\subset [a,b]$ be a $\zeta$-regular set. Then for all Borel set $F\subset [-M,M]^{d}$, we have
 \begin{itemize}
     \item[i)]There exists a constant $c_1>0$ depending only on $a,b,M,\zeta$, and the law of $B$, such that 
     \begin{equation}\label{lower cap F}
         c_1\,\mathcal{C}_{d-\zeta}\left(F\right)\leq \mathbb{P}\left\{ B(E)\cap F\neq \emptyset \right\},
     \end{equation}
     where $\mathcal{C}_{\alpha}(\cdot)$ is the $\alpha$-Bessel-Riesz type capacity associated to the Euclidean metric on $\mathbb{R}^d$. 
     \item[ii)] If in addition to the Hypothesis \ref{H1}, the function $\gamma$ satisfies the condition \eqref{condition raisonable}, there exists a constant $c_2>0$ depending only on $a,b,M, \zeta$, and the law of $B$, such that 
     \begin{equation}\label{upper Haus F}
         \mathbb{P}\left\{B(E)\cap F\neq \emptyset\right\}\leq c_2\, \mathcal{H}^{d-\zeta}\left(F\right),
     \end{equation}
     where $\mathcal{H}^{\alpha}(\cdot)$ is defined as the Hausdorff measure of order $\alpha$ associated to the Euclidean metric on $\mathbb{R}^d$ when $\alpha>0$, and is supposed equal to one when $\alpha\leq 0$.
 \end{itemize}
 \end{proposition}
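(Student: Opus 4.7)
The plan is to deduce Proposition \ref{bnds in trm of F} from Theorem \ref{Hitting proba} by converting the $\rho_\delta$-capacity and $\rho_\delta$-Hausdorff measure of $E\times F$ into the corresponding Euclidean quantities for $F$ alone, exploiting the two-sided $\zeta$-regularity of the measure $\nu$ on $E$. The main case throughout will be $\zeta<d$; the degenerate range $\zeta\geq d$ is either trivial for the upper bound (since $\mathcal{H}^{d-\zeta}(F)=1$ by convention and the probability is always at most $1$) or subsumed by the non-polarity discussion in the remark following Theorem \ref{Hitting proba} for the lower bound.

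For the lower bound (i), given any probability measure $\sigma$ on $F$ with finite Euclidean $(d-\zeta)$-energy, I will use the product $\mu=\nu\otimes\sigma$ on $E\times F$ as a test measure for $\mathcal{C}_{\rho_\delta,d}(E\times F)$. The aim is the estimate
\begin{equation*}
\mathcal{E}_{\rho_\delta,d}(\nu\otimes\sigma)\leq C\,\mathcal{E}_{d-\zeta}(\sigma),
\end{equation*}
which would yield $\mathcal{C}_{\rho_\delta,d}(E\times F)\geq C^{-1}\,\mathcal{C}_{d-\zeta}(F)$ after taking an infimum in $\sigma$, and then \eqref{lower cap F} by Theorem \ref{Hitting proba}(i). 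The pivotal ingredient is the uniform-in-$s$ estimate, valid for $s\in E$ and $u>0$,
\begin{equation*}
\int_E \max\{\delta(s,t),u\}^{-d}\,\nu(dt)\leq C\,u^{-(d-\zeta)},
\end{equation*}
which I obtain by splitting into the region $\{\delta(s,t)\leq u\}$ (contributing at most $u^{-d}\,\nu(B_\delta(s,u))\leq c_2\,u^{-(d-\zeta)}$ via the Frostman upper bound) and the dyadic annuli $\{\delta(s,t)\in(2^k u,2^{k+1}u]\}$ for $k\geq 0$, whose $\nu$-masses are each bounded by $c_2\,(2^{k+1}u)^{\zeta}$; the resulting geometric series $\sum_k 2^{-k(d-\zeta)}$ converges precisely because $d>\zeta$. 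Integrating this in $\nu$ and then in $\sigma\otimes\sigma$ (and using $\nu(E)=1$) converts the $\rho_\delta$-energy of $\nu\otimes\sigma$ into a multiple of the Euclidean $(d-\zeta)$-energy of $\sigma$. A preliminary reduction to a sub-piece $E_0:=E\cap B_\delta(s_0,\varepsilon)$ of small $\delta$-diameter, on which the renormalized restriction of $\nu$ retains two-sided $\zeta$-regularity on scales up to $\varepsilon$, is needed to match the small-diameter hypothesis of Theorem \ref{Hitting proba}(i).

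For the upper bound (ii), I invoke Theorem \ref{Hitting proba}(ii) directly, which reduces the task to
\begin{equation*}
\mathcal{H}^{d}_{\rho_\delta}(E\times F)\leq C\,\mathcal{H}^{d-\zeta}(F).
\end{equation*}
Given any near-optimal Euclidean cover $\{B(x_i,r_i)\}_{i\geq 1}$ of $F$ with $\sum_i (2r_i)^{d-\zeta}$ close to $\mathcal{H}^{d-\zeta}(F)$, the lower Frostman bound $\nu(B_\delta(\cdot,r))\geq c_1 r^{\zeta}$ lets me cover $E$ by $N_i\leq C\,r_i^{-\zeta}$ closed $\delta$-balls of radius $r_i$ via a maximal $r_i$-separated argument: the disjoint $\delta$-balls of radius $r_i/2$ around such a set each carry $\nu$-mass at least $c_1(r_i/2)^{\zeta}$, so the cardinality is bounded by $\nu(E)/c_1(r_i/2)^{\zeta}$, and by maximality the $r_i$-balls centered there cover $E$. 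Each product $B_\delta(\cdot,r_i)\times B(x_i,r_i)$ fits inside a $\rho_\delta$-ball of radius $r_i$, so summing $\sum_i N_i(2r_i)^d\leq C\sum_i r_i^{d-\zeta}$ and taking the infimum over the cover yields the claim.

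I expect the main technical friction to lie in the diameter reduction for the lower bound: I must verify that the renormalized restriction of $\nu$ to $E_0$ preserves two-sided $\zeta$-regularity, with constants controlled only by $c_1$, $c_2$ and $\varepsilon$, so that the application of Theorem \ref{Hitting proba}(i) is genuinely quantitative and the constant $c_1$ in \eqref{lower cap F} only depends on the stated parameters. All other estimates are standard dyadic and geometric-series manipulations, and the upper bound route is essentially a direct covering argument.
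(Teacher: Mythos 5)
Your proposal is correct and follows essentially the same route as the paper: the lower bound is obtained by testing $\mathcal{C}_{\rho_\delta,d}(E\times F)$ with the product measure $\nu\otimes\sigma$ and your inline dyadic-annuli estimate for $\int_E \max\{\delta(s,t),u\}^{-d}\nu(dt)$ is exactly the content of the paper's Lemma \ref{lem estim krnl} (which also covers the boundary case $\zeta=d$ via the logarithmic kernel $\varphi_0$, which your geometric-series argument sets aside); the upper bound is the same covering/packing count $N_\delta(E,r)\lesssim r^{-\zeta}$ from the lower Frostman bound. Your extra care about reducing to a small-$\delta$-diameter piece of $E$ before invoking Theorem \ref{Hitting proba}(i) is a reasonable point the paper glosses over, and it is harmless since only the upper Frostman bound on the renormalized restriction is needed there, which is preserved up to the factor $\nu(E_0)^{-1}\leq (c_1\varepsilon^{\zeta})^{-1}$.
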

 \begin{proof}
 We start by proving $(i)$. By using \eqref{lower bnd hitting}, it suffice to show that 
 \begin{equation}\label{compare capacities}
     \mathcal{C}_{\rho_{\delta},d}(E\times F)\geq c_3\, \mathcal{C}_{d-\zeta}(F).
 \end{equation}
 Let us suppose that $\mathcal{C}_{d-\zeta}(F)>0$, otherwise there is nothing to prove. Let $0<\eta<\mathcal{C}_{d-\zeta}(F)$, then there exists a probability measure $\mu$ supported on $F$ such that 
 \begin{equation}
     \mathcal{E}_{d-\zeta}(\mu):=\dint\dint\varphi_{d-\zeta}\left(\|x-y\|\right)\mu(dx)\mu(dy)\leq \eta^{-1}.
 \end{equation}
Since the measure $\nu$ satisfies \eqref{zeta-regular set}, by applying Lemma \ref{lem estim krnl} in the metric space $([0,1],\delta)$ we get that for all $x,y\in F$ we have     
\begin{equation}\label{estim krnl}
    \dint_E\dint_E\frac{\nu(ds)\nu(dt)}{\left(\max\left\{\delta(s,t),\|x-y\|\right\}\right)^d}\leq c_4\,\varphi_{d-\zeta}\left( \|x-y\|\right).
\end{equation}
Now, since $\nu\otimes\mu$ is a probability measure on $E\times F$, by applying Fubini's theorem and the last estimation \eqref{estim krnl}, we obtain
\begin{align}
    \mathcal{E}_{\rho_{\delta},d}(\nu\otimes \mu)=\dint_{E\times F}\dint_{E\times F}\frac{\nu(ds)\mu(dx)\nu(dt)\mu(dy)}{\left(\max\left\{\delta(s,t),\|x-y\|\right\}\right)^d}\leq c_5\,\dint\dint\varphi_{d-\zeta}\left(\|x-y\|\right)\mu(dx)\mu(dy)\leq c_5\,\eta^{-1}.
\end{align}
Hence, we have $\mathcal{C}_{\rho_{\delta},d}\left(\nu\otimes\mu\right)\geq c_5^{-1}\, \eta$. By making $\eta\uparrow \mathcal{C}_{d-\zeta}(F)$ we get the desired inequality in \eqref{compare capacities}.

For proving $(ii)$, by using \eqref{upper bnd hitting}, it suffice to show that
\begin{equation}\label{compar Hausd meas}
    \mathcal{H}_{\rho_{\delta}}^{d}\left(E\times F\right)\leq c_6\, \mathcal{H}^{d-\zeta}\left(F\right).
\end{equation}
We assume that $\mathcal{H}^{d-\zeta}\left(F\right)<\infty$ and $d>\zeta$, otherwise there is nothing to prove. Let $\eta>\mathcal{H}^{d-\zeta}\left(F\right)$ be arbitrary, then there is a covering $\left(B(x_n,r_n)\right)_{n\geq 1}$ of $F$ such that 
\begin{align}\label{covering F}
F\subset \bigcup_{n=1}^{\infty}B(x_n,r_n) \quad \text{and}\quad \sum_{n=1}^{\infty}(2r_n)^{d-\zeta}\leq \eta.   
\end{align}
For all $n\geq 1$, let $N_{\delta}\left(E,r_n\right)$ be the smallest number of $\delta$-balls of radius $r_n$  required to cover $E$. Then  the family $\left\{B_{\delta}(t_{n,j},r_n)\times B(x_n,r_n): 1\leq j\leq N_{\delta}(E,r_n)\,\,, n\geq 1\right\}$ form a covering of $E\times F$ by open balls of radius $r_n$ for the metric $\rho_{\delta}$. Let $0<r<1$, we define the so called packing number $P_{\delta}\left(E,r\right)$ which is defined to be the greatest number of disjoint $\delta$-balls $B_{\delta}\left(t_j,r\right)$ centered in $t_j\in E$ with radius $r$. The lower part in \eqref{zeta-regular set} implies that 
\begin{align*}
    c_1\,P_{\delta}(E,\delta)\, r^{\zeta}\leq \sum_{j=1}^{P_{\delta}(E,r)}\nu\left(B_{\delta}(t_j,r)\right)\leq 1.
\end{align*}
Using the well known fact that $N_{\delta}\left(E,2r\right)\leq P_{\delta}\left(E,r\right)$, we obtain that 
\begin{align}\label{estim cover nbr}
    N_{\delta}\left(E,r\right)\leq c_7\, r^{-\zeta},
\end{align}
where the constant $c_7$ depends on $E$ only. Putting all those previous facts together, we have
\begin{align}
    \sum_{n=1}^{\infty}\sum_{j=1}^{N_{\delta}\left(E,r_n\right)}\left(2r_n\right)^{d}\leq c_8\,\sum_{n=1}^{\infty}\left(2r_n\right)^{d-\zeta}\leq c_8\, \eta. 
\end{align}
Then by making $\eta \downarrow \mathcal{H}^{d-\zeta}(F)$, the inequality in \eqref{compar Hausd meas} holds, which finishes the proof.
 \end{proof}
%\begin{remark}
%$i)$ We note that the  upper and lower bounds in the condition \eqref{zeta-regular set} could be given with $\zeta_1$ and $\zeta_2$ respectively, such that $\zeta_1<\zeta_2$. The upper estimation in \eqref{zeta-regular set} is nothing but the well known Frostman's condition which implies that $\dim_{\delta}(E)\geq \zeta_1$, and by the aid of the lower estimation in \eqref{zeta-regular set} and the estimation of the covering number \eqref{estim cover nbr}, it is easy to check that $\dim_{\delta}(E))\leq \zeta_2$. Then the particular case when $\zeta_1=\zeta_2=\zeta$ ensures that $\dim_{\delta}(E)=\zeta$.
%\end{remark}

\begin{remark}
We note that when $E=[a,b]$, with $0<a<b<1$, lower and upper bounds have been obtained in Theorem 2.5 and Theorem 4.6 in \cite{Eulalia&Viens2013} in terms of the capacity  term $\mathcal{C}_{K}(F)$ and the Hausodrff measure term $\mathcal{H}_{\varphi}(F)$, respectively, and under some reasonable conditions on $\gamma$, where the kernel $K$ and the function $\varphi$ are defined in terms of $\gamma$. Those bounds could be regained again by just proving that 
\begin{align}
    \mathcal{C}_{\rho_{\delta},d}(E\times F)\geq c_9\, \mathcal{C}_{K}(F) \quad \text{ and  }\quad \mathcal{H}_{\rho_{\delta}}^{d}\left(E\times F\right)\leq c_{10}\, \mathcal{H}_{\varphi}\left(F\right). 
\end{align}
The proof holds from the same reasoning of Proposition \ref{bnds in trm of F}, and by using also some technics in the proof of Theorem 2.5 and Theorem 4.6 in \cite{Eulalia&Viens2013}. We do not give it here for some reasons of the length of paper. 
\end{remark}

\section{Hausdorff dimension of the random intersection $B(E)\cap F$ and $E\cap B^{-1}(F)$ }

% $ii)$ We come back to the problem of hitting points, we note that the previous theorem gives us the following 
 %$$
%\mathbb{P}\left\{B^{-1}(\{x\})\cap E\neq \varnothing \right\}\left\{\begin{array}{ll}
%>0 & \text{ if } \dim_{{\delta}}(E)>d \\
%=0 & \text { if } \dim_{{\delta}}(E)<d
%\end{array}\right.,
%$$
%for all $x\in \mathbb{R}^d$. This impels us, when $\dim_{\delta}(E)>d$, to ask if we can exhibit the $\delta$-Hausdorff dimension of $B^{-1}(\{x\})\cap E$. A partial answer to this question is given in the next section.
Having known from the previous section that the random intersections $B(E)\cap F$ and $E\cap B^{-1}(F)$ are non-empty under some conditions on the Borel sets $E\subset (0,1)$ and $F\subset \mathbb{R}^d$, it is worth interesting to ask how large are those random intersections. Therefore, the natural way to know more information about their size is to calculate their Hausdorff dimension.
%\textbf{Question \label{Q1}}: When $B(E)\cap F\neq \varnothing$, what is the Hausdorff dimension of those random intersections? \\
%, and we will try also to get an improvement in the next section (Precisely, we want to answer to a question about the uniformity of the result stated here, i.e. when the compact set $F$ may depends on $\omega$). Then we have the following result
The following result may give an answer to this question.
\begin{theorem}\label{Haus dim intersect}
Assume again that the Hypothesis \ref{H1} holds. Then for all $0<a<b<\infty$ and $M>0$, let $E\subset [a,b]$ and $F\subset [-M,M]^d$ be two compact sets %such that $\dim_{Euc}(F)+\dim_{\delta}(E)\geq d$
, then we have

\begin{itemize}
    \item[i)]
    \begin{itemize}
        \item[i-1)\label{lower 1}] For all $\eta>0$, $\mathbb{P}\left\{ \dim_{\delta}\left(E\cap B^{-1}(F)\right)\geq \dim_{\delta}(E)+\dim_{Euc}(F)-d-\eta\right\}>0$, and
        \item[i-2)] If the function $\gamma$ satisfies the condition \eqref{nice condition} then we have, a.s. 
        \begin{equation*}
            \dim_{\delta}\left(E\cap B^{-1}(F)\right)\leq \dim_{\rho_{\delta}}(E\times F)-d \label{upper 1}
        \end{equation*}
    \end{itemize}
        \item[ii)] If $\dim_{\delta}\left(E\right)\leq d$, we have 
    \begin{itemize}
        \item[ii-1)] For all $\eta>0$, $\mathbb{P}\left\{ \dim_{Euc}\left(B(E)\cap F\right)\geq \dim_{Euc}(F)+\dim_{\delta}(E)-d-\eta\right\}>0$, and
        \item[ii-2)] Again under \eqref{nice condition}  we have, a.s. \begin{equation*}
            \dim_{Euc}\left(B(E)\cap F\right)\leq \dim_{\rho_{\delta}}\left(E\times F\right)-d \label{upper 2}
        \end{equation*} 
    \end{itemize}
       \item[iii)] If $\dim_{\delta}(E)>d$, we have 
    \begin{itemize}
        \item[iii-1)] For all $\eta>0$, $\mathbb{P}\left\{ \dim_{Euc}\left(B(E)\cap F\right)\geq \dim_{Euc}(F)-\eta\right\}>0$, and 
        \item[iii-2)] $\dim_{Euc}\left(B(E)\cap F\right)\leq \dim_{Euc}(F)$ a.s.
    \end{itemize}
\end{itemize}
\end{theorem}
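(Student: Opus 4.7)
The statement (iii-2) is immediate from the inclusion $B(E)\cap F\subseteq F$. The remaining upper bounds (i-2) and (ii-2) follow from a covering argument, and the lower bounds (i-1), (ii-1), (iii-1) from the random-measure/Paley--Zygmund machinery already developed in the proof of Theorem \ref{Hitting proba}.

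For (i-2) and (ii-2), fix $\zeta>\dim_{\rho_{\delta}}(E\times F)$ and $\eta>0$, and cover $E\times F$ by $\rho_{\delta}$-balls of radii $r_i$ with $\sum_i (2r_i)^{\zeta}\leq\eta$. Since $\rho_{\delta}((s,x),(t,y))=\max\{\delta(s,t),\|x-y\|\}$, each such ball factorises as $B_{\delta}(t_i,r_i)\times B(x_i,r_i)$. Under condition \eqref{nice condition} one has $f_{\gamma}(r)\leq c\, r^{1-\varepsilon}$ for any small $\varepsilon>0$ (by choosing $x=\gamma^{-1}(r\sqrt{l})$ in \eqref{nice condition}), so Lemma \ref{lem upper bound} gives
\begin{equation*}
\mathbb{P}\Bigl\{\inf_{u\in B_{\delta}(t_i,r_i)\cap I}\|B(u)-x_i\|\leq r_i\Bigr\}\leq c\, r_i^{(1-\varepsilon)d}.
\end{equation*}
The $\delta$-balls $B_{\delta}(t_i,r_i)$ for which this event occurs cover $E\cap B^{-1}(F)$, and the corresponding Euclidean balls $B(x_i,r_i)$ cover $B(E)\cap F$. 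Taking expectations of the $\beta$-dimensional Hausdorff content with $\beta=\zeta-(1-\varepsilon)d$ yields
\begin{equation*}
\mathbb{E}\bigl[\mathcal{H}_{\delta,\infty}^{\beta}(E\cap B^{-1}(F))\bigr]+\mathbb{E}\bigl[\mathcal{H}_{\infty}^{\beta}(B(E)\cap F)\bigr]\leq c\sum_i (2r_i)^{\zeta}\leq c\eta.
\end{equation*}
Letting $\eta\downarrow 0$, then $\zeta\downarrow\dim_{\rho_{\delta}}(E\times F)$, and finally $\varepsilon\downarrow 0$, delivers both upper bounds.

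For the lower bounds, fix $\eta>0$ and select Frostman measures $\nu\in\mathcal{P}(E)$ with $\nu(B_{\delta}(t,r))\lesssim r^{\alpha}$ and $\mu\in\mathcal{P}(F)$ with $\mu(B(x,r))\lesssim r^{\beta}$, where $\alpha<\dim_{\delta}(E)$ and $\beta<\dim_{Euc}(F)$ are chosen close to these dimensions (for case (iii-1) one additionally demands $\alpha>d$, which is possible since $\dim_{\delta}(E)>d$). Introduce the random measures
\begin{equation*}
m_n(dt\,dx)=(2\pi n)^{d/2}\exp\bigl(-n\|B(t)-x\|^2/2\bigr)\,\nu(dt)\,\mu(dx)
\end{equation*}
on $E\times F$ exactly as in the proof of Theorem \ref{Hitting proba}. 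The first-moment computation gives $\mathbb{E}\|m_n\|\gtrsim 1$ uniformly in $n$, and Lemma \ref{lem lower bnd} yields the weighted second-moment bound
\begin{equation*}
\mathbb{E}\!\iint\frac{m_n(du)\,m_n(dv)}{\kappa(u,v)^{\zeta}}\leq C\iint\frac{\varphi_d(\rho_{\delta}((s,x),(t,y)))}{\kappa((s,x),(t,y))^{\zeta}}\,\nu(ds)\,\mu(dx)\,\nu(dt)\,\mu(dy),
\end{equation*}
where $\kappa=\delta$ for case (i-1) and $\kappa=\|\cdot\|$ for cases (ii-1) and (iii-1); the target exponents are $\zeta<\alpha+\beta-d-\eta$ in (i-1)/(ii-1) and $\zeta<\beta-\eta$ in (iii-1). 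A Paley--Zygmund argument then produces a non-zero subsequential weak limit $m_\infty$ supported on $\{(s,B(s)):s\in E\cap B^{-1}(F)\}$, whose projection onto the first (resp.\ second) coordinate has finite $\zeta$-energy with positive probability, and Frostman's theorem converts this into the desired dimension lower bounds.

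The principal obstacle is the finiteness of the four-fold integral in the second-moment estimate. The plan is to split the integration domain according to whether $\delta(s,t)\geq\|x-y\|$ or the reverse, bound the kernel $\varphi_d(\rho_{\delta}(\cdot))/\kappa(\cdot)^{\zeta}$ on each piece by the dominant coordinate, and then integrate by a layer-cake decomposition of the type displayed in \eqref{decomp energy}, using the Frostman bounds on $\nu$ and $\mu$. The arithmetic $\alpha+\beta-d>\zeta$ (respectively $\alpha>d$ combined with $\beta>\zeta$ in case (iii-1)) is precisely what forces each resulting geometric series to converge, and the two-point local non-determinism of Lemma \ref{lem two pts LND}, invoked through Lemma \ref{lem lower bnd}, is what reduces the true Gaussian joint density to the effective kernel $\varphi_d(\rho_{\delta})$ that makes this scheme go through.
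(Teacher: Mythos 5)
Your proposal is correct and follows essentially the same route as the paper: the upper bounds via the factorised $\rho_{\delta}$-ball covering combined with Lemma \ref{lem upper bound} and condition \eqref{nice condition}, and the lower bounds via the Gaussian-weighted random measures, Paley--Zygmund, and an energy estimate whose four-fold integral is tamed exactly by the dominant-coordinate/layer-cake decomposition you describe (which the paper packages as Lemma \ref{lem estim krnl}). The only cosmetic differences are that the paper works directly with the marginals of your $m_n$ on $E$ (resp.\ $F$) rather than projecting afterwards, and takes a finite-energy measure on one factor with a Frostman measure on the other instead of Frostman measures on both; neither changes the argument.
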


Let $Y:\Omega\rightarrow \mathbb{R}_+$ be a positive random variable, the essential supremum norm $\|Y\|_{L^{\infty}(\mathbb{P})}$ is defined by 
$$ \|Y\|_{L^{\infty}(\mathbb{P})}:=\sup\{\theta\geq 0  : \mathbb{P}\left(Y\geq \theta\right)>0 \}. $$
As an application of the previous theorem, we have the following corollary
    
\begin{corollary}\label{Cor dim H-scale}
If we assume that $B$ is a $d$-dimensional Gaussian process such %$B=B^{H,\beta}$, i.e. assume
that for each component $B_i$,  the commenturability condition \eqref{commensurate} holds with $\gamma(r)=r^{H}L(r)$, where $H\in (0,1)$ and $L(\cdot)$ is a slowly varying function. Then for any Borel sets $E\subset [a,b]$ and $F\subset [-M,M]^d$, we have
\begin{itemize}
    \item[i)] \begin{equation}\label{infinite norm 1}
        \dim_{Euc}(E)+H(\dim_{Euc}(F)-d)\leq \left\|\dim_{Euc}\left(B^{-1}(F)\cap E\right)\right\|_{L^{\infty}\left(\mathbb{P}\right)}\leq H\left(\dim_{\rho_{H}}(E\times F)-d\right),
    \end{equation}
    where $\dim_{\rho_H}\left(\cdot\right)$ is the Hausdorff dimension associated to the metric $\rho_H$  on $\mathbb{R}_+\times \mathbb{R}^d$, where  $\rho_{H}((s,x),(t,y)):=\max\{|t-s|^H,\|x-y\|\}$.
    \item[ii)] If $\dim_{Euc}(E)\leq H\,d$, we have 
    \begin{equation}\label{infinite norm 2}
        \frac{\dim_{Euc}(E)}{H}+\dim_{Euc}(F)-d\leq \left\|\dim_{Euc}\left(B(E)\cap F\right)\right\|_{L^{\infty}\left(\mathbb{P}\right)}\leq \dim_{\rho_{H}}(E\times F)-d
    \end{equation}
    \item[iii)]If $\dim_{Euc}(E)>H\,d$, we have 
    \begin{equation}\label{infinite norm 3}
        \left\|\dim_{Euc}\left(B(E)\cap F\right)\right\|_{L^{\infty}\left(\mathbb{P}\right)}=\dim_{Euc}(F).
    \end{equation}
\end{itemize}
\end{corollary}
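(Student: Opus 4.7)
The plan is to reduce every inequality of the corollary to the corresponding part of Theorem~\ref{Haus dim intersect} by translating the abstract dimensions $\dim_\delta$ and $\dim_{\rho_\delta}$ into their Hölder-scale counterparts. The two identities that must be established first are, for every Borel $A\subset \mathbb{R}_+$ and every Borel $G\subset \mathbb{R}_+\times \mathbb{R}^d$,
\begin{equation*}
\dim_\delta(A) \;=\; \frac{\dim_{Euc}(A)}{H}, \qquad \dim_{\rho_\delta}(G) \;=\; \dim_{\rho_H}(G).
\end{equation*}
Both rest on the Potter bound for slowly varying functions: for every $\varepsilon>0$ there is a constant $c_\varepsilon$ such that $c_\varepsilon^{-1}\, r^{H+\varepsilon}\le \gamma(r)\le c_\varepsilon\, r^{H-\varepsilon}$ for all sufficiently small $r$. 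Combined with \eqref{commensurate}, this sandwiches $\delta$ between the two power metrics $|\cdot|^{H\pm\varepsilon}$, and since such sandwiches only distort Hausdorff dimension by the factor $1/(H\pm\varepsilon)$, letting $\varepsilon\downarrow 0$ yields the first identity (already observed in Remark~\ref{examples of delta-dim}(ii) in a special case). For the second identity I will argue directly with coverings: a $\rho_\delta$-ball of radius $r$ sits inside a $\rho_H$-ball of radius $\lesssim r^{H/(H+\varepsilon)}$, so that $\dim_{\rho_H}(G)\leq \dim_{\rho_\delta}(G)(1+\varepsilon/H)$, and the reverse containment gives the opposite inequality; $\varepsilon\downarrow 0$ closes the loop.

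With these identities in hand, I feed them into Theorem~\ref{Haus dim intersect}. By Proposition~\ref{prop RVF} the regularly varying $\gamma$ satisfies the strong condition \eqref{condition raisonable}, hence also \eqref{nice condition}, so the upper bounds of the theorem are available. For part~(i), Theorem~\ref{Haus dim intersect}(i-1) gives $\|\dim_\delta(E\cap B^{-1}(F))\|_{L^\infty(\mathbb{P})}\geq \dim_\delta(E)+\dim_{Euc}(F)-d$; since $\dim_{Euc}=H\dim_\delta$ holds pathwise on any Borel subset of $\mathbb{R}_+$, multiplying by $H$ yields the lower bound of \eqref{infinite norm 1}, and the upper bound of \eqref{infinite norm 1} comes from (i-2) after the same multiplication by $H$. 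Part~(ii) is analogous: the hypothesis $\dim_\delta(E)\leq d$ translates to $\dim_{Euc}(E)\leq Hd$, and since $B(E)\cap F\subset\mathbb{R}^d$ already lives in Euclidean space no conversion is needed on the intersection side, so (ii-1) and (ii-2) give \eqref{infinite norm 2} directly. For part~(iii), the lower bound of \eqref{infinite norm 3} is Theorem~\ref{Haus dim intersect}(iii-1) with $\eta\downarrow 0$, while the matching upper bound is the trivial observation that $B(E)\cap F\subset F$ forces $\dim_{Euc}(B(E)\cap F)\leq \dim_{Euc}(F)$ almost surely.

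The only genuinely technical step is the second metric identity $\dim_{\rho_\delta}(G)=\dim_{\rho_H}(G)$. The non-uniformity of the slowly varying factor $L$ makes a one-shot bi-Lipschitz comparison impossible, and the product metrics are anisotropic, so the argument cannot be reduced to separately comparing each coordinate. The cleanest resolution, sketched above, is to track how a $\rho_\delta$-ball of radius $r$ is squeezed between $\rho_{H\pm\varepsilon}$-balls of radii $\simeq r^{H/(H\pm\varepsilon)}$, which costs only a $(1+\varepsilon/H)$ distortion in Hausdorff dimension; then $\varepsilon\downarrow 0$ finishes both identities and hence the corollary.
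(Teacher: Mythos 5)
Your proposal is correct and follows essentially the same route as the paper: reduce everything to Theorem \ref{Haus dim intersect} by establishing $\dim_{\delta}(A)=\dim_{Euc}(A)/H$ and $\dim_{\rho_{\delta}}(G)=\dim_{\rho_H}(G)$ from the slow variation of $L$ (the Potter-type bounds $r^{H+\varepsilon}\lesssim \gamma(r)\lesssim r^{H-\varepsilon}$), noting that Proposition \ref{prop RVF} supplies condition \eqref{condition raisonable} and hence \eqref{nice condition} for the upper bounds. The only divergence is in the identity $\dim_{\rho_{\delta}}=\dim_{\rho_H}$, where the paper sandwiches $\dim_{\rho_H}(G)\leq\dim_{\rho_{\delta}}(G)\leq\dim_{\rho_{H-\varepsilon}}(G)$ and invokes a quantitative continuity estimate from \cite{Erraoui&Hakiki2020}, whereas you give a self-contained covering comparison with multiplicative distortion $1+\varepsilon/H$; both are valid and yours avoids the external citation.
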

\begin{remark}
We note that the equality between the upper and lower bounds in \eqref{infinite norm 1} and \eqref{infinite norm 2} occur when $\dim_{Euc}(E)=\operatorname{Dim}_{Euc}(E)$ or $\dim_{Euc}(F)=\operatorname{Dim}_{Euc}(F)$, because of the following comparison 
\begin{align}
    \begin{aligned}
    \frac{\dim_{Euc}(E)}{H}+ \dim_{Euc}(F)\leq \dim_{\rho_H}(E\times F)&\leq \min\left\{\frac{\operatorname{Dim}_{Euc}(E)}{H}+ \dim_{Euc}(F),\frac{\dim_{Euc}(E)}{H}+ \operatorname{Dim}_{Euc}(F)\right\}
    %&\leq \operatorname{Dim}_{\rho_H}(E\times F)\leq \frac{\operatorname{Dim}(E)}{H}+ \operatorname{Dim}(F)
    ,\label{estim parabolic dim 1}
    \end{aligned}
\end{align}
for any Borel sets $E\subseteq \mathbb{R}_+$, $F\subseteq \mathbb{R}^d$.
\end{remark}

\begin{proof}[proof of Corollary \ref{Cor dim H-scale}]
 For the lower bound it suffice to apply the previous theorem. Indeed, by Proposition \ref{prop RVF} the process $B$ satisfies the condition \eqref{condition raisonable}. Since $L(\cdot)$ is slowly varying function, we have 
 \begin{align}\label{slow variation}
     L(r)=o\left(r^{-\varepsilon}\right) \text{ near $0$ for any $\varepsilon>0$ small enough,}
 \end{align}
   then we can repeat the same argument used in $(i)$ of Remark \ref{examples of delta-dim} to show that $\dim_{\delta}(E)=\frac{\dim_{Euc}(E)}{H}$. % (the proof of this last assertion can be founded in pg. 31 in \cite{Eulalia&Viens2013}!)    
 For the upper bound, using again the previous theorem, it suffice to check that $\dim_{\rho_{\delta}}(\cdot)\equiv\dim_{\rho_H}(\cdot)$. Indeed, thanks to the property \eqref{slow variation} again; it is easy to check, for any $\varepsilon>0$ small enough, that we have $\dim_{\rho_H}(G)\leq \dim_{\rho_{\delta}}(G)\leq \dim_{\rho_{H-\varepsilon}}(G)$ for all Borel set $G\subset \mathbb{R}_+\times \mathbb{R}^d$, then using Proposition 2.5 of Hakiki and Erraoui \cite{Erraoui&Hakiki2020} we obtain 
 \begin{equation*}
     0\leq \dim_{\rho_{\delta}}(G)-\dim_{\rho_H}(G)\leq \dim_{\rho_{H-\varepsilon}}(G)-\dim_{\rho_H}(G)\leq \frac{1}{H}-\frac{1}{H-\varepsilon},
 \end{equation*}
and by making $\varepsilon\downarrow 0$, the desired equality follows.
\end{proof}

Before proving both of Theorem \ref{Haus dim intersect}, we need the following lemma, which will be helpful to establish the lower bounds part.

\begin{lemma}\label{lem estim krnl}
Let $(X,\rho)$ be a bounded metric space, such that there exists a probability measure $\mu$ supported on $X$ which satisfies 
\begin{align}\label{Frost cond 2}
    \mu\left(B_{\rho}(u,r)\right) \leq C_1 r^{\kappa},
\end{align}
for all $u\in X$, $r>0$, where $C_1>0$ and $\kappa>0$ are two constants. Then for any $\theta>0$, there exists $C_2>0$ such that
\begin{align}
    \int_X\int_X\frac{\mu(du)\mu(dv)}{\left(\max\{\rho(u,v),r\}\right)^{\theta}}\leq C_2\, \varphi_{\theta-\kappa}(r),
\end{align}
for all $r\in (0,1)$.
\end{lemma}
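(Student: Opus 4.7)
The plan is to bound the double integral by first controlling the inner integral $I(u):=\int_X (\max\{\rho(u,v),r\})^{-\theta}\,\mu(dv)$ uniformly in $u\in X$, and then integrate in $\mu(du)$; since $\mu$ is a probability measure, the uniform bound transfers directly to the double integral. To estimate $I(u)$ I would separate the contribution of $\{v\in X:\rho(u,v)\le r\}$, where the integrand is exactly $r^{-\theta}$ and whose mass is at most $C_1 r^{\kappa}$ by the Frostman-type assumption \eqref{Frost cond 2}, from the contribution of the dyadic annuli
$$A_k:=\{v\in X:2^k r<\rho(u,v)\le 2^{k+1}r\},\qquad k\ge 0.$$
On $A_k$ the integrand is at most $(2^k r)^{-\theta}$, and \eqref{Frost cond 2} gives $\mu(A_k)\le \mu(B_\rho(u,2^{k+1}r))\le C_1 (2^{k+1}r)^{\kappa}$.

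Because $X$ is bounded, writing $D:=\mathrm{diam}(X)<\infty$ the annuli $A_k$ are empty once $2^k r>D$, so the sum terminates at $N_r:=\max\{0,\lfloor\log_2(D/r)\rfloor\}$, yielding
$$I(u)\le C_1 r^{\kappa-\theta}+C_1\,2^{\kappa}\,r^{\kappa-\theta}\sum_{k=0}^{N_r}2^{k(\kappa-\theta)}.$$
From here, the three branches of the kernel $\varphi_{\theta-\kappa}$ fall out of three distinct regimes of the geometric-type sum: if $\theta>\kappa$ the series converges to a constant, giving $I(u)\lesssim r^{\kappa-\theta}=\varphi_{\theta-\kappa}(r)$; if $\theta=\kappa$ the sum has $N_r+1$ terms each equal to $1$, producing the logarithmic correction $\lesssim 1+\log(D/r)\lesssim \log(e/(r\wedge 1))=\varphi_{0}(r)$ since $r\in(0,1)$; and if $\theta<\kappa$ the sum is dominated by its last term and the overall contribution is $\lesssim D^{\kappa-\theta}$, which is the constant $1=\varphi_{\theta-\kappa}(r)$ up to a multiplicative constant.

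The resulting constant $C_2$ depends only on $C_1$, $\kappa$, $\theta$, and the diameter $D$, and in particular is independent of $r\in(0,1)$ and $u\in X$. Integrating the uniform bound on $I(u)$ against $\mu(du)$ finishes the proof. There is no genuinely difficult step here: the argument is bookkeeping around a standard dyadic decomposition, and the only point requiring attention is matching the three regimes $\theta>\kappa$, $\theta=\kappa$, $\theta<\kappa$ to the three branches in the definition \eqref{radial kernel} of $\varphi_{\theta-\kappa}$; the logarithmic critical case is the one where one must be careful to keep the $r$-dependence explicit so that the $\log(e/r)$ form is recovered.
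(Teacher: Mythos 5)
Your proof is correct and follows essentially the same route as the paper's: a uniform bound on the inner integral obtained by splitting off the ball $\{\rho(u,v)\le r\}$ and summing over dyadic annuli controlled by the Frostman condition \eqref{Frost cond 2}, with the three branches of $\varphi_{\theta-\kappa}$ arising from the regimes $\theta>\kappa$, $\theta=\kappa$, $\theta<\kappa$. The only cosmetic differences are that you index the annuli outward from radius $r$ (truncating at the diameter $D$) whereas the paper normalizes $\operatorname{diam}(X)\le 1$ and indexes inward by powers $2^{-k}$; the estimates and conclusions are identical.
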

\begin{proof}
 Since $\mu$ is a probability measure, it suffice to estimate the quantity $I:=\sup_{v \in X} \int_X \frac{\mu(du)}{\left(\max\{\rho(u,v),r\}\right)^{\theta}}$. When $\theta<\kappa$, we have 
 $$ I\leq \sup_{v\in X}\int_X\frac{\mu(du)}{\rho(u,v)^{\theta}}<\infty.$$
 When $\theta\geq \kappa$, We decompose this last integral into two parts $I_1$ and $I_2$, where
 \begin{align*}
     I_1=\int_{\{u: \rho(u,v)\leq r \}}\frac{\mu(du)}{r^{\theta}} \quad \text{ and }\quad I_2=\int_{\{u:\rho(u,v)\geq r\}}\frac{\mu(du)}{\rho(u,v)^{\theta}}.
 \end{align*}
 By using \eqref{Frost cond 2} we get 
 \begin{align}\label{I1 power}
     I_1\leq C_3 r^{\kappa-\theta}.
 \end{align}
 For estimating $I_2$, we assume that $diam(X)\leq 1$, and we set $k(r):=\inf\{k: 2^{-k}\leq r\}$. Then we have 
 \begin{align}
     \{u:\rho(u,v)\geq r\}\subset \bigcup_{k=1}^{k(r)}\{u:2^{-k}\leq \rho(u,v)<2^{-k+1}\}
 \end{align}
 Using again \eqref{Frost cond 2}
 \begin{align}\label{I2 geom}
     \begin{aligned}
         I_2&\leq \sum_{k=1}^{k(r)}2^{k\,\theta}\mu\left(\{u: 2^{-k}\leq \rho(u,v)<2^{-k+1}\}\right)\\
         &\leq \sum_{k=1}^{k(r)}2^{k(\theta-\kappa)}
     \end{aligned}
 \end{align}
 It follows from the definition of $k(r)$ that $2^{-k(r)}\leq r< 2^{-k(r)+1}$. Then, for $\theta=\kappa$ we get easily that 
 \begin{align}\label{I2 log}
     I_2\leq C_4\,\log(1/r),
 \end{align}
  for the case $\theta>\kappa$, we use a comparison with a geometric series to obtain \begin{align}\label{I2 power}
      I\leq C_5 r^{\kappa-\theta}.
  \end{align} 
  Putting \eqref{I1 power}, \eqref{I2 geom}, \eqref{I2 log}, and \eqref{I2 power} all together, we get the desired estimation.
\end{proof}
Let us prove now Theorem \ref{Haus dim intersect}
\begin{proof}[proof of Theorem \ref{Haus dim intersect}]First, we note that we can assume without loss of generality that the diameter of $E$ is smaller than $\varepsilon$. Let us prove $(i)$, we assume that $\dim_{Euc}(F)+\dim_{\delta}(E)>d$ otherwise there nothing to prove. We may assume also that $\dim_{\delta}(E)<\infty$\footnote{Because when $\dim_{\delta}(E)=\infty$, the result will take the form "$\mathbb{P}\{\dim_{\delta}(E\cap B^{-1}(F))\geq \eta\}>0$ for any $\eta>0$" and the proof follows from the same reasoning}. Let $0<\eta<\dim_{Euc}(F)+\dim_{\delta}(E)-d$, the definition of $\delta$-Hausdorff dimension ensures that there is $\nu \in \mathcal{P}(E)$ such that $\mathcal{E}_{\delta,\alpha}(\nu)<\infty$, where $\alpha:=\dim_{\delta}(E)-\eta/2$, and the classical Frostman's theorem ensures that there is $\mu\in \mathcal{P}(F)$ such that 
\begin{align}
    \mu(B(x,r))\leq C_1\, r^{\beta} \quad \text{ for all $x\in \mathbb{R}^d$ and $r>0$,}
\end{align}
where $\beta:=\dim_{Euc}(F)-\eta/2$, and $C_1$ is a positive constant. Let us consider the sequence of random measures $(\nu_n)_{n\geq 1}$ on $E$ defined as 
\begin{align}\label{seq of measure}
    \begin{aligned}
        \nu_n(ds)&=\left(\int_F (2 \pi n)^{d / 2} \exp \left(-\frac{n\|B(s)-x\|^{2}}{2}\right) \mu(d x)\right)\nu(ds)\\
        &=\left(\int_F\int_{\mathbb{R}^{d}} \exp \left(-\frac{\|\xi\|^{2}}{2 n}+i\langle\xi, B(s)-x\rangle\right) d \xi\,\mu(d x)\right)\nu(ds)
    \end{aligned}
\end{align}
Denote the total mass of $\nu_n$ by $\|\nu_n\|=\nu_n(E)$. We want to verify the following claim
\begin{equation}\label{claim 2}
    \mathbb{E}\left(\left\|\nu_{n}\right\|\right) \geqslant C_{2}, \quad  \quad \mathbb{E}\left(\left\|\nu_{n}\right\|^{2}\right) \leqslant C_{3}
\end{equation}
and 
\begin{equation}\label{claim energy}
    \mathbb{E}\left(\mathcal{E}_{\delta,\zeta}(\nu_n)\right)\leqslant C_4
\end{equation}
where $\zeta:=\beta+\alpha-d$, and the constants $C_2$, $C_3$, and $C_4$ are independent of $n$. For the first inequality in \eqref{claim 2} we use the same technique as in \eqref{claim 1}. We prove only the estimation \eqref{claim energy}, and  the second estimation in \eqref{claim 2} can be deduced from same technique of the last one. Indeed, we express the expected energy in \eqref{claim energy} as
\begin{align}\label{estimate energy}
    \begin{aligned}
\mathbb{E}\left( \int_E\int_E\frac{\nu_n(ds)\nu_n(dt)}{\delta(t,s)^{\zeta}} \right)=\int_{E^2}&\frac{\nu(ds)\nu(dt)}{\delta(t,s)^{\zeta}} \int_{F^2}\mu(dx)\mu(dy) \int_{\mathbb{R}^{2 d}} e^{-i(\langle\xi, x\rangle+\langle\eta, y\rangle)} \, \\
&\times \exp \left(-\frac{1}{2}(\xi, \eta)\left(n^{-1} I_{2 d}+\operatorname{Cov}(B(s), B(t))\right)(\xi, \eta)^{T}\right) d \xi \, d \eta\\
\leq C_5\, &\int_{E^2}\frac{\nu(ds)\nu(dt)}{\delta(t,s)^{\zeta}} \int_{F^2}\frac{\mu(dx)\mu(dy) }{\left(\max \left\{\gamma(|t-s|),\|x-y\|\right\}\right)^{d}}\\
\\
\leq C_6\,& \int_{E}\int_E\frac{\nu(ds)\nu(dt)}{\delta(t,s)^{\zeta+d-\beta}}
=C_6\, \mathcal{E}_{\delta,\alpha}(\nu)<\infty,
\end{aligned}
\end{align}
where the first inequality follows from Lemma \ref{lem lower bnd}, and the second inequality follows by applying Lemma \ref{lem estim krnl} to $X=F$ and $\rho$ is the euclidean metric.

Now, it is clear that \eqref{claim 2} combined with the Paley Zygmund inequallity (c.f. Kahane \cite{Kahane}, p.8) ensure that $\{\nu_n: n\geq 1\}$ has a subsequence that converge weakly to a finite measure $\nu_{ \infty}$ supported on $E\,\cap B^{-1}(F)$ which is positive on an event of positive probability (larger than $C_2^2/2C_3$). We use in \eqref{claim energy} Fatou's lemma and the lower semicontinuity of the energy $\mathcal{E}_{\delta,\zeta}(\cdot): \nu\mapsto \int\int \delta(t,s)^{-\zeta}\nu(ds)\nu(dt)$ on the space of positive measures $\mathcal{M}^+([0,1])$ equipped with the weak topology (see for example \cite{Landkof}, pg. 78), we can deduce that $\mathcal{E}_{\delta,\alpha}(\nu_{\infty})<\infty$ a.s., and by definition of $\delta$-Hausdorff dimension we deduce that $\mathbb{P}\left\{\dim_{\delta}\left(E\,\cap B^{-1}(F)\right)\geq \zeta \right\}\geq C_2^2/2C_3$. This finishes the proof of the lower bound in (i-1).

For the upper bound in (i-2), let us fix an arbitrary $\zeta>\dim_{\rho_{\delta}}(E\times F)-d$, and $0<\varepsilon<\zeta-\dim_{\rho_{\delta}}(E\times F)+d$, then $\mathcal{H}_{\rho_{\delta}}^{d+\zeta-\varepsilon}(E\times F)=0$. Let $\eta>0$ small enough, the definition of the $\rho_{\delta}$-Hausdorff measure ensures that there is a covering of $E\times F$ by balls $\{B_{\rho_{\delta}}((t_i,x_i),r_i), i\geq 1\}$ in $\left(\mathbb{R}_+\times\mathbb{R}^d,\rho_{\delta}\right)$ with small radii $r_i$, such that 
\begin{equation}\label{cover product 2}
    E\times F\subseteq \bigcup_{i=1}^{\infty}B_{\rho_{\delta}}((t_i,x_i),r_i)\quad  \text{with }\quad  \sum_{i=1}^{\infty}(2r_i)^{d+\zeta-\varepsilon}\leq \eta.
\end{equation}
Since for any $i\geq 1$, the ball $B_{\rho_{\delta}}((t_i,x_i),r_i)$ is nothing but the Cartesian product of $B_{\delta}(t_i,r_i)$ and $B(x_i,r_i)$, it is not difficult to check that
$$ E\cap B^{-1}(F)\subseteq \bigcup_{\left\{\, i:\, B_{\delta}(t_i,r_i)\cap   \textbf{B}^{-1}\left(B(x_i,r_i)\right)\neq \varnothing\right\}} B_{\delta}(t_i,r_i).$$
%$$\{B_{\delta}(t_i,r_i): \textbf{B}^{-1}\left(B(x_i,r_i)\right)\cap B_{\delta}(t_i,r_i)\neq \varnothing \},$$
%constitute a covering of the random set $E\, \cap B^{-1}(F)$
Hence, we have 
\begin{equation}
    \begin{aligned}\label{estim expect measure 1}
        \mathbb{E}\left(\mathcal{H}_{\delta}^{\zeta}\left(E\, \cap B^{-1}(F)\right)\right)&\leq \mathbb{E}\left(\sum_{i=1}^{\infty}(2r_i)^{\zeta}\operatorname{1}_{\{\, B_{\delta}(t_i,r_i)\cap   \textbf{B}^{-1}\left(B(x_i,r_i)\right)\neq \varnothing \}}\right)\\
        &\leq \sum_{i=1}^{\infty}(2r_i)^{\zeta}\mathbb{P}\left\{\, B_{\delta}(t_i,r_i)\cap   \textbf{B}^{-1}\left(B(x_i,r_i)\right)\neq \varnothing \right\}\\
        &\leq C_1 \sum_{i=1}^{\infty}(2r_i)^{d+\zeta-\varepsilon}\leq C_1 \eta,
    \end{aligned}
\end{equation}
where the last inequality follows from injecting the condition \eqref{nice condition} within \eqref{estim small ball 1} for $\varepsilon^{\prime}=\varepsilon/d$. Since $\eta>0$ is arbitrary, we conclude that $\mathcal{H}_{\delta}^{\zeta}\left(E\,\cap B^{-1}(F)\right)=0$ a.s., and then $\dim_{\delta}\left(E\,\cap B^{-1}(F)\right)\leq \zeta$. By making $\zeta \downarrow \dim_{\rho_{\delta}}(E\times F)-d$, we get the desired inequality.

For $(ii)$ we proceed by the same method, but just changing the roles between $E$ and $F$. For the lower bound part, let again $0<\eta<\dim_{Euc}(F)+\dim_{\delta}(E)-d$ small enough, then there is $\mu\in \mathcal{P}(F)$ such that $\mathcal{E}_{\beta}(\mu)<\infty$, where $\beta:=\dim_{Euc}(F)-\eta/2$, and there is $\nu\in \mathcal{P}(E)$ such that 
\begin{equation}\label{Frost cond 3}
    \nu(B_{\delta}(t,r))\leq C_2\, r^{\alpha} \quad \text{ for all $t\in (0,1)$ and $r>0$,}
\end{equation}
where $\alpha=\dim_{\delta}(E)-\eta/2$. We consider in this case $(\mu_n)_{n\geq 1}$ to be a sequence of random measures on $F$ defined as 
\begin{align*}
    \begin{aligned}
        \mu_n(dx)=\left(\int_F (2 \pi n)^{d / 2} \exp \left(-\frac{n\|B(s)-x\|^{2}}{2}\right) \nu(d s)\right)\mu(dx)%\\
        %&=\left(\int_F\int_{\mathbb{R}^{d}} \exp \left(-\frac{\|\xi\|^{2}}{2 n}+i\langle\xi, B(s)-x\rangle\right) d \xi\,\mu(d x)\right)\nu(ds). 
        \end{aligned}
\end{align*}
The estimation of $\mathbb{E}\left(\|\mu_n\|\right)$ from bellow is easy to check. We just estimate the expectation of the energy $\mathbb{E}\left(\mathcal{E}_{\zeta}(\mu_n)\right)$, and the estimation of $\mathbb{E}\left(\|\mu_n\|^2\right)$ follows from the same lines also. Indeed, for $\zeta=\alpha+\beta-d$, we have 
\begin{align}
    \begin{aligned}
\mathbb{E}\left( \int_F\int_F\frac{\mu_n(dx)\mu_n(dy)}{\|x-y\|^{\zeta}} \right)&%\int_{F^2}&\frac{\nu(dx)\nu(dy)}{\|x-y\|^{\zeta}} \int_{E^2}\mu(ds)\mu(dt) \int_{\mathbb{R}^{2 d}} e^{-i(\langle\xi, x\rangle+\langle\eta, y\rangle)} \, \\
%&\times \exp \left(-\frac{1}{2}(\xi, \eta)\left(n^{-1} I_{2 d}+\operatorname{Cov}(B(s), B(t))\right)(\xi, \eta)^{T}\right) d \xi \, d \eta\\
\leq C_3\,\int_{F^2}\frac{\mu(dx)\mu(dy)}{\|x-y\|^{\zeta}} \int_{E^2}\frac{\nu(ds)\nu(dt) }{\left(\max \left\{\gamma(|t-s|),\|x-y\|\right\}\right)^{d}}\\
\\
&\leq C_4\, \int_{F}\int_F\frac{\mu(dx)\mu(dy)}{\|x-y\|^{\zeta+d-\alpha}}
=C_4\, \mathcal{E}_{\beta}(\mu)<\infty,
\end{aligned}
\end{align}
where the second inequality come from an application of Lemma \ref{lem estim krnl} to $X=E$ and $\rho:=\delta$ ( we note that $\theta:=d$ and $\kappa:=\alpha=\dim_{\delta}(E)-\eta/2$, and $\theta>\kappa$ because of the condition "$\dim_{\delta}(E)\leq d$"). Repeating the same argument as above, we deduce that $\mathbb{P}\left\{\dim_{Euc}\left( B(E)\cap F\right)\geq \zeta \right\}> 0$. Which finishes the proof of the lower bound in (ii-1). For the upper bound in (ii-2), we repeat the same covering techniques used above in \eqref{cover product 2} and \eqref{estim expect measure 1}, it suffice to remark that in this case, the random set $B(E)\cap F$ is covered by the family of balls $\left\{ B(x_i,r_i)\, \text{ s.t. } \, B_{\delta}(t_i,r_i)\cap   \textbf{B}^{-1}\left(B(x_i,r_i)\right)\neq \varnothing \right\}$. 

For $(iii)$, the upper bound is trivial (by monotonicity of the Hausdorff dimension). The lower bound can be deduced from the same argument in $(ii)$, it suffice to take $\zeta=\dim_{Euc}(F)-\eta$ , and the condition $\dim_{\delta}(E)>d$ ensure that, the measure $\nu$ can be chosen such that \eqref{Frost cond 3} is satisfied for some $d<\alpha<\dim_{\delta}(E)$.
\end{proof}

\bibliographystyle{abbrv}

\begin{thebibliography}{9}
\small
\bibitem{Adler 2} R. J. Adler. \textit{An introduction to continuity, extrema, and related topics for general Gaussian processes}, IMS Lecture Notes-Monograph Series, Volume 12, (1990).
 
\bibitem{Bingham et al} N. H. Bingham.  C. M. Goldie and  J. L. Teugels. \textit{Regular Variation} Cambridge University Press
 Cambridge, (1987).
 
 \bibitem{BiPe}	C. J. Bishop and Y. Peres. \textit{Fractals in probability and analysis.} Cambridge Studies in Advanced Mathematics, 162. Cambridge University Press, Cambridge, (2017).

\bibitem{Chen&Xiao} Z. Chen and Y. Xiao. On intersections of independent anisotropic Gaussian random fields. Sci. China Math. 55, no. 11, 2217-2232, (2012).

\bibitem{CD} J. Cuzick and J. P. DuPreez.  Joint continuity of Gaussian local times. Ann. Probab. 10, no. 3, 810-817, (1982).
 
\bibitem{Kahane} J. P. Kahane. \textit{Some random series of functions}, Second edition. Cambridge Studies in Advanced Mathematics, 5. Cambridge University Press, Cambridge, (1985).

\bibitem{Hutchinson} Hutchinson, J.E.: Fractals and self similarity, Indiana Univ. Math. J., 30, 713-747, (1981). 

\bibitem{Tay&Wats}S. J. Taylor and N. A. Watson. A Hausdorff measure classification of polar sets for the heat equation. Math. Proc. Cambridge Philos. Soc. 97, no. 2, 325--344, (1985).

\bibitem{Eulalia&Viens2013} E. Nualart and F. Viens. Hitting probabilities for general Gaussian processes. (2013), available at https://arxiv.org/abs/1305.1758.

\bibitem{Hawkes}J. Hawkes. Local Properties of Some Gaussian Processes. Z. Wahrscheinlichkeitstheorie und
Verw. Gebiete, 40, 309–315, (1997).



\bibitem{Falconer} K. J. Falconer. \textit{Fractal geometry-mathematical foundations and applications}. John Wiley Sons, Chichester, (1990).

\bibitem{Ho95} J.D. Howroyd. On dimension and on the existence of sets of finite positive Hausdorff measure. Proc. Lond. Math. Soc., III. 70, 581–604 (1995).

\bibitem{Peres&Morters} P. Mörters and Y. Peres. \textit{Brownian motion.} Cambridge Series in Statistical and Probabilistic Mathematics, 30. Cambridge University Press, Cambridge, (2010).

\bibitem{Bierme&Xiao} H. Biermé  and C. Lacaux  and Y. Xiao. Hitting probabilities and the Hausdorff dimension of the inverse images of anisotropic Gaussian random fields. Bull London Math. Soc. 41, 253-273, (2009).

\bibitem{Marta&Calleja2021} M. sanz-solé and A. H. Calleja. Anisotropic Gaussian random fields: criteria for hitting probabilities and applications. J. Stoch. PDE Anal. Comp. 42 (2021).


\bibitem{Erraoui&Hakiki 3} M. Erraoui, and Y. Hakiki. Fractional Brownian motion with deterministic drift: Hausdorff dimension of the intersection of the image with a non-random Borel set. Work in progress, (2021).

\bibitem{Erraoui&Hakiki2020} M. Erraoui and Y. Hakiki. Images of fractional Brownian motion with deterministic drift: Positive Lebesgue measure and non-empty interior. Submitted paper, available at https://arxiv.org/abs/2112.02055, (2020). 

\bibitem{Xiao97} Y. Xiao. Hölder conditions for the local times and the Hausdorff measure of the level sets of Gaussian random fields. Probab. Theory Relat. Fields 109, 129–157, (1997).



\bibitem{Geman&Horowitz} D. Geman and J. Horowitz. Occupation densities. Ann. Probab. \textbf{8}, 1-67, (1980).


\bibitem{Landkof} L. S. Landkof. \textit{Foundations of modern potential theory}, Springer-Verlag, Berlin (1972).

\bibitem{Viens&Mocioalca2005} O. Mocioalca, and F. Viens. Skorohod integration and stochastic calculus beyond the fractional Brownian scale; J. Functional Analysis 222, 385-434, (2005).


\bibitem{KX2015} D. Khoshnevisan and Y. Xiao. Brownian motion and thermal capacity. Ann. Probab. 43, no. 1, 405-434, (2015).

\bibitem{Khosh} D. Khoshnevisan.\textit{  Multiparameter processes. An introduction to random fields}.  Springer-Verlag, New York, (2002). 

\bibitem{Ouahhabi&Tudor2013} H. Ouahhabi and C.A. Tudor. Additive functionals of the solution to fractional stochastic  heat equation. J. of Fourier Analysis and Application 19, 777-791, (2013).

\bibitem{Seneta} E. Seneta. Regularly varying functions, Lecture Notes in Math.508, Springer-Verlag, (1976).

\bibitem{Talagrand} M. Talagrand. Regularity of gaussian processes. Acta Math. 159, 1–2, 99–149, (1987).

%\bibitem{Taylor&Tricot85} S. J. Taylor and C. Tricot. Packing Measure and its Evaluation for a Brownian Path. Trans. Amer. Math. Soc. 288, (1985), 679-699.

%\bibitem{Tricot82} C. Tricot. Two definitions of fractional dimension. Math. Proc. Camb. Phil. Soc. 91, (1982), 57-74.


\bibitem{SX}R. Shieh. and Y. Xiao. Images of Gaussian random fields: Salem sets and interior points. Studia Math. 176, no. 1, 37-60, (2006). 

\bibitem{Xiao2009} Y. Xiao. Sample path properties of anisotropic Gaussian random fields. A Minicourse on Stochastic Partial Differential Equations, (D Khoshnevisan and F Rassoul-Agha, editors), Lecture Notes in Math, 1962: 145-212. New York: Springer, (2009)

\end{thebibliography}

\end{document}